\documentclass[12pt]{article}
\usepackage{amsthm,amssymb, amsmath, amscd, amsfonts,graphicx, latexsym,verbatim}
\usepackage{cite, psfrag,color}
\usepackage{titlesec}
\usepackage{epstopdf}
\usepackage{fancybox}
\usepackage{float}
\usepackage[numbers,square]{natbib}
\usepackage[font=normalsize,labelsep=none]{caption}
\usepackage{diagbox}
\usepackage{changepage}
\usepackage{enumerate}
\usepackage{arydshln}
\usepackage{bbm}
\usepackage{setspace}
\usepackage{stmaryrd}  
\usepackage{color}
\usepackage{enumitem}
\usepackage{diagbox}
\usepackage{changepage}
\usepackage{enumerate}
\usepackage{arydshln}
\usepackage{bbm}
\usepackage{setspace}
\usepackage{titletoc}
\usepackage{enumitem}
\usepackage{cite} 
\usepackage{mathtools} 
\usepackage{booktabs}
\usepackage[
pagebackref=true,
colorlinks=true,
linkcolor=red,
citecolor=red
]{hyperref}

\titleformat{\section} 
{\raggedright\large\bfseries\bf} 
{\thesection .\quad} 
{0pt} 
{}  

\titleformat{\subsection} 
{\raggedright\normalsize\bfseries\it} 
{\thesubsection .\quad} 
{0pt} 
{} 

 \newtheorem{theorem}{Theorem}[section]
 \newtheorem{lemma}{Lemma}[section]
 \newtheorem{corollary}{Corollary}[section]

 \newtheorem{proposition}{Proposition}[section]
 
\theoremstyle{definition} 
\newtheorem{remark}{Remark}[section]

\theoremstyle{definition} 
\newtheorem{definition}{Definition}[section]


\newlength{\boxedparwidth}
  {\begin{center} \begin{tabular}{|@{\hspace{.315in}}c@{\hspace{.15in}}|}
                  \hline \\ \begin{minipage}[t]{\boxedparwidth}
                  \setlength{\parindent}{.25in}}%
  {\end{minipage} \\ \\ \hline \end{tabular} \end{center}}
\newtheoremstyle{boldname}
{10pt}   
{10pt}   
{\normalfont} 
{}      
{\normalfont} 
{}     
{ }     
{\textbf{#1}~\textbf{{#2}}\textbf{.}\thmnote{ (#3)}} 

\theoremstyle{boldname}
\newtheorem{example}{Example}[section]

\newcommand{\smallsubseteq}{\mathrel{\scalebox{0.55}{$\subseteq$}}}
\newcommand{\smalleq}{\mathrel{\scalebox{0.55}{$=$}}}

\newcommand{\tinysubseteq}{\mathrel{\scalebox{0.45}{$\subseteq$}}}
\newcommand{\tinyeq}{\mathrel{\scalebox{0.45}{$=$}}}

\usepackage{relsize}
\DeclareMathOperator*{\Uplusmed}{\mathlarger{\mathlarger{\uplus}}}

\begin{document}
\begin{center}
{\large{\bf{
Mahonian statistics on words with fixed weak right-to-left minima and on permutations with a fixed descent set
\vskip 0mm}}}
\end{center}
\vspace{-2.0mm}
\begin{center}
\text{Shao-Hua Liu}\\
   \vskip 1.0mm
{School of Statistics and Data Science\\
Guangdong University of Finance and Economics\\
Guangzhou, China\\

   \vskip 1.0mm
Email: liushaohua@gdufe.edu.cn}
\end{center}
 \title{}

 \vskip 1mm

\noindent {\bf Abstract.}
Our first main result shows that, for words with a fixed multiset of weak right-to-left minima, 
the statistics within each of the following three classes are equidistributed:

1. Mahonian statistics:
\textsf{inv}, \textsf{maj}, \textsf{den}, \textsf{mak}, \textsf{mad}, $\textsf{inv}_{r}$, $r\textsf{maj}$,  and $r$\textsf{den};

2. Euler--Mahonian statistics:  $(\textsf{des},\textsf{maj})$,
$(\textsf{exc},\textsf{den})$,
and $(\textsf{des},\textsf{mak})$;

3. $r$-Euler--Mahonian statistics: 
$(r\textsf{des},r\textsf{maj})$ and
$(r\textsf{exc},r\textsf{den})$.

\noindent Our second main result shows that, 
for permutations with a fixed descent set,
the statistics within each of the following two classes are equidistributed:

1. Mahonian statistics: \textsf{inv}, \textsf{imaj}, $\textsf{imak}$, $\textsf{iinv}_{r}$, and $\textsf{istat}$;

2. Mahonian--Stirling-type statistics: 
$(\textsf{inv},\textsf{lrmax})$,
$(\textsf{imaj},\textsf{lrmax})$,
$(\textsf{imak},\textsf{lrmax})$,
and $(\textsf{iinv}_{r},\textsf{lrmax})$.

\noindent 
Moreover, we apply these results to set partitions, 221-avoiding words,  alternating permutations, and permutations with $k$ alternating runs, 
thereby obtaining several families of equidistributed Mahonian-type statistics on these combinatorial structures.

\vskip 1mm
\noindent {\bf Keywords}: 
Mahonian statistic, set partition, $221$-avoiding, alternating permutation, alternating run

   \vskip 1mm
\noindent {\bf MSC2010}: 05A05, 05A19
   \vskip 0mm
   
\titlecontents{section}[1.5em]
{\footnotesize \vspace{-6.0pt}}
{\contentslabel{1.5em}}{\hspace*{-1.6em}}
{~\titlerule*[0.6pc]{$.$}~\contentspage}

\titlecontents{subsection}[3.5em]
{\footnotesize\vspace{-6.0pt}}
{\contentslabel{2.3em}}{\hspace*{-4em}}
{~\titlerule*[0.6pc]{$.$}~\contentspage}

\tableofcontents    
   
\section{Introduction}
Throughout the paper, we let $n$ be a positive integer,
and let $[n]=\{1,2,\ldots,n\}$.
A \emph{composition} of $n$ is a finite sequence $\alpha=(\alpha_{1},\alpha_{2},\ldots,\alpha_{m})$ of positive integers satisfying $\alpha_{1}+\alpha_{2}+\cdots+\alpha_{m}=n$.
We write $\alpha\models n$ to indicate that $\alpha$ is a composition of $n$.
Unless otherwise noted, we shall always assume that $\alpha=(\alpha_{1},\alpha_{2},\ldots,\alpha_{m})\models n$.

Let $\mathfrak{S}_{\alpha}$ denote the set of multipermutations 
(or words) of the multiset
$\{1^{\alpha_{1}},2^{\alpha_{2}},\ldots,m^{\alpha_{m}}\}$,
where $i^{\alpha_{i}}$ denotes $\alpha_{i}$ copies of $i$.
For example, $\mathfrak{S}_{(3,1)}=\{1112,1121,1211,2111\}$.
It is well known that the cardinality of $\mathfrak{S}_{\alpha}$, denoted by $|\mathfrak{S}_{\alpha}|$, is given by the \emph{multinomial coefficient}
$$|\mathfrak{S}_{\alpha}|
=\binom{n}{\alpha_{1},\alpha_{2},\dots,\alpha_{m}}=\frac{n!}{\alpha_{1}!\alpha_{2}!\ldots \alpha_{m}!}.$$	
   \vskip -3mm

Given a word 
$w=w_{1}w_{2}\ldots w_{n}\in\mathfrak{S}_{\alpha}$,
a position $i$ with $1\leqslant i\leqslant n-1$ is called a \emph{descent} of $w$ if $w_{i}>w_{i+1}$.
Let $\textsf{Des}(w)$ denote the set of descents of $w$,
and let $\textsf{des}(w)=|\textsf{Des}(w)|$.
Let $\overline{w}=a_{1}a_{2}\ldots a_{n}$ denote the nondecreasing rearrangement of $w=w_{1}w_{2}\ldots w_{n}$. 
An \emph{excedance} of $w$ is a position $i$ such that $w_{i}>a_{i}$.
The number of excedances of $w$ is denoted by $\textsf{exc}(w)$.
A classical result states that $\textsf{des}$ and $\textsf{exc}$ are equidistributed on $\mathfrak{S}_{\alpha}$; that is,
$$\sum_{w\in\mathfrak{S}_{\alpha}}q^{\textsf{des}(w)}=\sum_{w\in\mathfrak{S}_{\alpha}}q^{\textsf{exc}(w)}.$$
Any statistic that is equidistributed with them is said to be \emph{Eulerian}.

An \emph{inversion} of
$w=w_{1}w_{2}\ldots w_{n}\in\mathfrak{S}_{\alpha}$ is a pair $(i, j)$ with $i<j$ and $w_{i}>w_{j}$. 
Let $\textsf{inv}(w)$ denote the number of inversions of $w$.
The \emph{major index} of $w$, denoted by $\textsf{maj}(w)$,
is the sum of the descents of $w$; that is,
$$\textsf{maj}(w)=\sum_{i\in {\textsf{Des}}(w)}i.$$
MacMahon \cite{MacMahon-1916-book} showed that the statistics \textsf{inv} and \textsf{maj} are equidistributed on $\mathfrak{S}_{\alpha}$,
with common generating function given by the \emph{$q$-multinomial coefficient}: 
\begin{align}\label{eq-inv-maj-S_M}
	\sum_{w\in\mathfrak{S}_{\alpha}}q^{\textsf{inv}(w)}=\sum_{w\in\mathfrak{S}_{\alpha}}q^{\textsf{maj}(w)}
	=\genfrac{[}{]}{0pt}{}{n}{\alpha_{1},\alpha_{2},\dots,\alpha_{m}}_{q}
	=\frac{[n]!_{q}}{[\alpha_{1}]!_{q}[\alpha_{2}]!_{q}\ldots [\alpha_{m}]!_{q}},
\end{align}
where $[i]!_{q}=[i]_{q}[i-1]_{q}\ldots[1]_{q}$,
and 
$[i]_{q}=1+q+\cdots+q^{i-1}$.
In his honor, any statistic with this distribution is said to be \emph{Mahonian}.
Some common Mahonian statistics in the literature are listed in Table \ref{Table-1}.
Since we do not use $\textsf{\footnotesize Z}$ and $\textsf{sor}$, 
we omit their definitions; 
the definition of \textsf{stat} for words can be found in \cite{Kitaev-2016}. 
Definitions of the remaining statistics in Table \ref{Table-1} are given in the corresponding proof sections.

\begin{table}[t]
	\centering
	\begin{tabular}{llll}
		Name& Reference & Setting (permutations/words) & Year \\
		\hline
		\textsf{inv}&Rodrigues \cite{Rodrigues-1839}& permutations &1839\\
		~           &MacMahon \cite{MacMahon-1916-book}& words &1916\\
		\textsf{maj}&MacMahon \cite{MacMahon-1916-book}&permutations \& words &1916\\
		$r\textsf{maj}$ &Rawlings \cite{Rawlings-1981} &permutations& 1981\\
		~ & Rawlings \cite{Rawlings-1981-2} & words& 1981  \\
		\textsf{inv}$_{r}$&Kadell \cite{Kadell-1985}&permutations \& words &1985\\
		\textsf{\footnotesize Z}  &Zeilberger--Bressoud \cite{Zeilberger-1985}&words &1985  \\
		\textsf{den} &Denert \cite {Denert-1990}, Foata--Zeilberger \cite{Foata-Zeilberger-1990} & permutations& 1990\\
		~ & Han \cite{Han-1994} &words& 1994  \\
		\textsf{mak}  & Foata--Zeilberger \cite{Foata-Zeilberger-1990} &  permutations& 1990 \\
		~ &Clarke--Steingr\'{\i}msson--Zeng \cite{Clarke-1997} & words & 1997 \\
		\textsf{mad}  &Clarke--Steingr\'{\i}msson--Zeng \cite{Clarke-1997}& permutations \& words &1997  \\
		\textsf{stat}  & Babson--Steingr\'{\i}msson \cite{Babson-2000} & permutations&  2000 \\
		~ & Kitaev--Vajnovszki \cite{Kitaev-2016} & words &  2016 \\
		\textsf{sor}  &  Petersen \cite{Petersen-2011} & permutations&  2011 \\
~ &  Grady--Poznanovi\'{c} \cite{Grady-2018} & words &  2018 \\
$r$\textsf{den}  &  Han \cite{Han-1991} & permutations &  1991 \\
~ &  Huang--Lin--Yan \cite{Huang-Lin-Yan-2026}  & words &  2026 		
	\end{tabular}
	\caption{Some Mahonian statistics.}\label{Table-1}
\end{table}

A bistatistic is called \emph{Euler--Mahonian} if it is equidistributed with $(\textsf{des}, \textsf{maj})$. 
Foata and Zeilberger \cite{Foata-Zeilberger-1990} showed that $(\textsf{exc}, \textsf{den})$ is Euler--Mahonian on permutations, and Han \cite{Han-1994} extended this result to words. 
Foata and Zeilberger \cite{Foata-Zeilberger-1990} also introduced another Euler--Mahonian statistic, namely $(\textsf{des}, \textsf{mak})$, on permutations, 
which was extended to words by Clarke, Steingr\'{\i}msson, and Zeng \cite{Clarke-1997}.
The above results can be expressed in terms of generating functions as follows:
\begin{align}\label{eq-Euler-Mahonian}
	\sum_{w\in\mathfrak{S}_{\alpha}}t^{\textsf{des}(w)}q^{\textsf{maj}(w)}=
	\sum_{w\in\mathfrak{S}_{\alpha}}t^{\textsf{exc}(w)}q^{\textsf{den}(w)}=
	\sum_{w\in\mathfrak{S}_{\alpha}}t^{\textsf{des}(w)}q^{\textsf{mak}(w)}.
\end{align}

Let $r$ be a positive integer.
A bistatistic is called \emph{$r$-Euler--Mahonian} if it is equidistributed with $(r\textsf{des}, r\textsf{maj})$. 
Liu \cite{Liu-2024} proved that $(r\textsf{exc}, r\textsf{den})$ is $r$-Euler--Mahonian on permutations, and 
Huang, Lin, and Yan \cite{Huang-Lin-Yan-2026} extended this result to words:
\begin{align}\label{r-eq-Euler-Mahonian}
	\sum_{w\in\mathfrak{S}_{\alpha}}t^{r\textsf{des}(w)}q^{r\textsf{maj}(w)}=
	\sum_{w\in\mathfrak{S}_{\alpha}}t^{r\textsf{exc}(w)}q^{r\textsf{den}(w)}.
\end{align}

The  remainder of this paper is organized as follows.
In Section \ref{Section-restricted-words},
we present our main results on words with a fixed multiset of weak right-to-left minima; 
their proofs are deferred to Sections \ref{section-inv-majd}--\ref{Section-tri}.
In Section \ref{Section-restricted-permutations},
we establish our main results on permutations with a fixed descent set.
Sections \ref{Section-set partitions}--\ref{Section-Permutations-with-k-runs} are devoted, respectively, 
to applications to set partitions, $221$-avoiding words, alternating permutations, and permutations with $k$ alternating runs.

\section{Mahonian statistics on words with a fixed multiset of weak right-to-left minima}\label{Section-restricted-words}
\subsection{The main results}
Let $w=w_{1}w_{2}\ldots w_{n}\in\mathfrak{S}_{\alpha}$.
A \emph{(strict) right-to-left minimum} of $w$ is a letter $w_{i}$ such that either $i=n$ or $w_{i}<w_{j}$ for all $j>i$.
Let $\textsf{Rlmin}(w)$ denote the set of right-to-left minima of $w$,
and let $\textsf{rlmin}(w)=|\textsf{Rlmin}(w)|$.
A \emph{weak right-to-left minimum} of $w$ is a letter $w_{i}$ such that either $i=n$ or $w_{i}\leqslant w_{j}$ for all $j>i$.
Let $\textsf{Rlwmin}(w)$  denote the multiset of weak right-to-left   minima of $w$,
and let $\textsf{rlwmin}(w)=|\textsf{Rlwmin}(w)|$.
For example, if $\sigma=3212315354646547577$, then we have
\begin{align*}
\textsf{Rlmin}(32123\underline{1}5\underline{3}546465\underline{4}7\underline{5}7\underline{7})&=\{1,3,4,5,7\}~\text{and}~\textsf{rlmin}(\sigma)=5;\\
\textsf{Rlwmin}(32\underline{1}23\underline{1}5\underline{3}5\underline{4}6\underline{4}65\underline{4}7\underline{577})&=\{1,1,3,4,4,4,5,7,7\}~\text{and}~\textsf{rlwmin}(\sigma)=9.
\end{align*}
Let $\text{supp}(M)$ denote the set of elements of the multiset $M$, ignoring multiplicities.
For example, $\text{supp}(\{1, 1, 1, 3, 3, 4, 4\}) = \{1, 3, 4\}$.
Clearly, we have $\text{supp}(\textsf{Rlwmin}(w))=\textsf{Rlmin}(w).$

Let $\mathfrak{S}_{\alpha,R}$ denote the set of words $w\in\mathfrak{S}_{\alpha}$ such that $\textsf{Rlwmin}(w)=R$.
We show that the Mahonian statistics listed in Table \ref{Table-1} are equidistributed on $\mathfrak{S}_{\alpha,R}$,
except for \textsf{\footnotesize Z}, \textsf{stat}, and \textsf{sor}.
\begin{theorem}\label{Thm-main-Mahonian-Rlwmin}
For any multiset $R$ and positive integer $r$,
we have
	\begin{align*} 
		\begin{split}
			&\sum_{w\in\mathfrak{S}_{\alpha,R}}q^{\emph{\textsf{inv}}(w)~}
			=\sum_{w\in\mathfrak{S}_{\alpha,R}}q^{\emph{\textsf{maj}}(w)~}
			=\sum_{w\in\mathfrak{S}_{\alpha,R}}q^{\emph{\textsf{den}}(w)~}
			=\sum_{w\in\mathfrak{S}_{\alpha,R}}q^{\emph{\textsf{mak}}(w)}
			=\sum_{w\in\mathfrak{S}_{\alpha,R}}q^{\emph{\textsf{mad}}(w)}\\
			=&\sum_{w\in\mathfrak{S}_{\alpha,R}}q^{\emph{\textsf{inv}}_{r}(w)}
            =\sum_{w\in\mathfrak{S}_{\alpha,R}}q^{r\emph{\textsf{maj}}(w)}	   
			=\sum_{w\in\mathfrak{S}_{\alpha,R}}q^{r\emph{\textsf{den}}(w)}.     		
		\end{split}
	\end{align*}
\end{theorem}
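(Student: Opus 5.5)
The plan is to show that each of the eight statistics has the same generating function on $\mathfrak{S}_{\alpha,R}$, namely an explicit product of $q$-multinomial coefficients determined by $\alpha$ and $R$. The key structural observation concerns weak right-to-left minima. A word $w\in\mathfrak{S}_{\alpha}$ with $\textsf{Rlwmin}(w)=R$ decomposes canonically as follows. Let $\textsf{Rlmin}(w)=\{b_1<b_2<\cdots<b_k\}$ and let $R$ contain $b_j$ with multiplicity $c_j$. Then $w$ factors as
$$w=u_1\,b_1^{(c_1)}\,u_2\,b_2^{(c_2)}\cdots u_k\,b_k^{(c_k)},$$
where the $j$th block consists of $c_j$ copies of $b_j$ interleaved with letters of $u_j$. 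Every letter to the right of the last copy of $b_{j-1}$ is at least $b_{j}$, and the non-minimum letters in the segment after the last copy of $b_{j-1}$ and up to the last copy of $b_j$ are strictly larger than $b_j$. Consequently $w$ is determined by a sequence of segments. Segment $j$ is a word over letters $\geqslant b_j$ that ends in $b_j$, contains exactly $c_j$ copies of $b_j$, and has all other letters $>b_j$. The multisets used by the segments are constrained only by the requirement that all copies of letters in $[b_{j-1}+1,b_j-1]$ lie in segments $\leqslant j-1$.

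First I will prove the formula for $\textsf{inv}$ directly, peeling segments from the right. In each segment the last letter is forced. The positions of the $c_j-1$ other copies of $b_j$ among the remaining letters, together with the distribution of larger letters, contribute a $q$-binomial factor times $q$-multinomials for the larger letters. This gives an explicit product formula $F_{\alpha,R}(q)$, or at least a recursion in which $R$ is stripped one minimum at a time.

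Next I will treat $\textsf{maj}$. Rather than repeating the computation, I plan to use a Foata-type bijection on words that preserves $\textsf{Rlwmin}$ and maps $\textsf{maj}$ to $\textsf{inv}$. The natural candidate is the standard Foata transformation for words, which acts by inserting letters left to right and cutting into blocks according to comparison with the newly inserted letter. I will check that it preserves the last letter and, inductively, the multiset of weak right-to-left minima. If it does not, I will instead verify that $\textsf{maj}$ satisfies the same recursion as $\textsf{inv}$, by removing the rightmost block of minimal letters and tracking how the major index shifts.

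For the remaining statistics $\textsf{den}$, $\textsf{mak}$, $\textsf{mad}$, $\textsf{inv}_r$, $r\textsf{maj}$ and $r\textsf{den}$, I will look for bijections from the literature that establish equidistribution on all of $\mathfrak{S}_\alpha$ and preserve $\textsf{Rlwmin}$, or else prove that each statistic satisfies the recursion in $R$. For $\textsf{inv}_r$ and $r\textsf{maj}$, which interpolate between $\textsf{inv}$ and $\textsf{maj}$, the recursive approach looks cleanest: removing the final segment changes the statistic by an amount whose distribution matches the $q$-binomial factor. For $\textsf{mak}$, $\textsf{mad}$, $\textsf{den}$ and $r\textsf{den}$ I will use their excedance/descent-block decompositions. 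Weak right-to-left minima appear naturally in the Clarke--Steingr\'{\i}msson--Zeng and Han bijections: letters that are weak right-to-left minima tend to be fixed points or non-excedances. So I will check that these bijections carry $\textsf{Rlwmin}$ to a corresponding statistic that is preserved.

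I expect the main obstacle to be the denert-type statistics $\textsf{den}$ and $r\textsf{den}$. Their definitions mix excedance and non-excedance subwords, so a segment-by-segment recursion is not obviously compatible with them. My first attempt will be to show that Han's bijection $\Phi$ satisfying $\textsf{den}\circ\Phi=\textsf{maj}$ preserves weak right-to-left minima, or sends them to a statistic equal in distribution when $R$ is fixed. Failing that, I will use the Huang--Lin--Yan bijection for $r\textsf{den}$ and verify by induction on $n$ that it respects the segment decomposition above.
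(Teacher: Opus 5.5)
Your bijective branch (take the known Mahonian bijections on $\mathfrak{S}_\alpha$ and show each preserves $\textsf{Rlwmin}$) is exactly the paper's strategy. The gap is that the proposal stops where the proof begins: every preservation property is announced rather than proved. In addition, the recursive fallback you prefer for $\textsf{inv}_r$ and $r\textsf{maj}$ uses the wrong decomposition. The right-to-left-minimum segments are not independent, because which letters larger than $b_j$ go into which segment is free. So the segment route does not give a product even for $\textsf{inv}$, which acquires content-dependent cross terms. Also, $\textsf{maj}$-type statistics are not additive over concatenation: a suffix placed after $L$ letters picks up an extra $L\cdot\textsf{des}$. There is therefore no per-segment factor to match.

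What works is peeling off the largest letter. An occurrence of $i$ is a weak right-to-left minimum iff no smaller letter lies to its right, i.e.\ iff $u_j(i)=0$. Hence the copies of $m$ in $\textsf{Rlwmin}(w)$ are exactly the trailing $m$'s, and deleting all $m$'s removes just those from $R$. For $\textsf{inv}$ this gives at once $\prod_{i=2}^{m}q^{\alpha_i-c_i}\genfrac{[}{]}{0pt}{}{s_{i-1}+\alpha_i-c_i-1}{\alpha_i-c_i}_q$, where $c_i$ is the multiplicity of $i$ in $R$, $s_{i-1}=\alpha_1+\cdots+\alpha_{i-1}$, and $c_1=\alpha_1$ is forced. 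The same observation is why Rawlings' insertion bijection for $r\textsf{maj}$ preserves $\textsf{Rlwmin}$: the $j$th copy of $m$ goes to the rightmost slot iff $u_j(m)=0$, so the trailing $m$'s are preserved, and one inducts on $m$.

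The remaining checks each need a specific idea that is absent from your proposal.

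\textbf{Foata and Liang.} For Foata's map, and verbatim for Liang's $F_r$ (which covers every $\textsf{inv}_r$, Foata being $r=1$), the key fact is that $J_x$ preserves $\textsf{Rlwmin}\cap\lceil x\rceil$. Combine this with $\textsf{Rlwmin}(w_1\cdots w_{i+1})=\{w_{i+1}\}\cup(\textsf{Rlwmin}(w_1\cdots w_i)\cap\lceil w_{i+1}\rceil)$ and induct; the frozen block of $r-1$ letters is a common suffix and does not interfere.

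\textbf{Huang--Lin--Yan.} This map already handles $\textsf{den}$ at $r=1$, so Han's bijection is unnecessary. Track the bottom row through the adjacent swaps made by the $T^r_i$. A swap occurs only when exactly one of the two letters lies in $\{x+r,\ldots,y+r-1\}$, where $x<y$ and $x$ sits further right in the bottom row. That letter exceeds $x$, so it is a weak minimum in neither word. The other letter either also exceeds $x$ or is smaller than its partner, so its status is unchanged.

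\textbf{Clarke--Steingr\'{\i}msson--Zeng.} For $\textsf{mak}$ and $\textsf{mad}$ the single bijection $\Phi_\alpha=\operatorname{istd}_\alpha\circ\Phi\circ\operatorname{std}$ suffices. A right-to-left minimum $c$ of $\operatorname{std}(w)$ is a non-descent top with embracing number $0$, so it lies in $g'$ with no smaller letter to its right. A smaller letter $b$ of $f'$ becomes an excedance top at a position $j<b<c\leqslant k$, where $k$ is the position of $c$. This yields only $\textsf{Rlwmin}(w)\subseteq\textsf{Rlwmin}(\Phi_\alpha(w))$, since $\operatorname{istd}_\alpha$ sends right-to-left minima to weak ones but not conversely. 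Equality needs a further step: the paper iterates the bijection around its finite orbit, which forces the chain of inclusions to be equalities.

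Finally, a bijection that carries $\textsf{Rlwmin}$ to ``a corresponding statistic'' would not suffice. $\mathfrak{S}_{\alpha,R}$ is defined by $\textsf{Rlwmin}$ itself, so it must be preserved literally.
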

\begin{remark}
The equidistribution of \textsf{inv} and \textsf{maj} on $\mathfrak{S}_{\alpha,R}$ already appears in ${\text{Foata~and~Han \cite{Foata-2011}}}$.
\end{remark}
\begin{remark}
None of the statistics \textsf{\footnotesize Z}, \textsf{stat}, or \textsf{sor} is equidistributed with \textsf{inv} on $\mathfrak{S}_{\alpha,R}$.
\end{remark}
For Euler--Mahonian statistics, 
we establish the following refinement of (\ref{eq-Euler-Mahonian}).
\begin{theorem}\label{Thm-main-Euler--Mahonian-Rlwmin}
For any multiset $R$, we have
	\begin{align*}
		\sum_{w\in\mathfrak{S}_{\alpha,R}}t^{\emph{\textsf{des}}(w)}q^{\emph{\textsf{maj}}(w)}=
		\sum_{w\in\mathfrak{S}_{\alpha,R}}t^{\emph{\textsf{exc}}(w)}q^{\emph{\textsf{den}}(w)}=
		\sum_{w\in\mathfrak{S}_{\alpha,R}}t^{\emph{\textsf{des}}(w)}q^{\emph{\textsf{mak}}(w)}.
	\end{align*}
\end{theorem}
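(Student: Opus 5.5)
We plan to refine the classical bijections behind (\ref{eq-Euler-Mahonian}) so that they preserve the multiset of weak right-to-left minima. The key structural observation is a factorization of any $w\in\mathfrak{S}_{\alpha,R}$. Write $R=\{c_1^{k_1},\ldots,c_s^{k_s}\}$ with $c_1<\cdots<c_s$. The weak right-to-left minima of $w$ are exactly the positions of the last $k_1$ occurrences of $c_1$ that have no smaller letter after them, and similarly for the other $c_j$. Concretely, $w$ decomposes uniquely as
\[
w=u^{(1)}\,c_1\,v^{(1)}\,u^{(2)}\,c_2\,v^{(2)}\cdots ,
\]
and the simplest way to organize this is by the position of the leftmost smallest letter.

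We therefore proceed by induction on $n$. Let $c_1=1$, the smallest letter, necessarily in $R$ with multiplicity $k_1$. Then $w=x\,1\,y$, where $y$ contains exactly $k_1-1$ ones and all ones of $w$ after the chosen one are weak minima. Equivalently, $w=x\,1\,1^{a_0}y_1 1 \ldots$, and the condition $\textsf{Rlwmin}(w)=R$ says that the suffix after the last $1$ of $x$ has weak right-to-left minima $R\setminus\{1^{k_1}\}$. Rather than fight this decomposition for each statistic separately, we reduce everything to a single insertion lemma. Let $w'$ be obtained from $w$ by deleting all letters equal to $1$ and decrementing. We describe how $(\textsf{des},\textsf{maj})$, $(\textsf{exc},\textsf{den})$ and $(\textsf{des},\textsf{mak})$ change when the $\alpha_1$ ones are reinserted into $w'$ subject to the constraint that exactly $k_1$ of them become weak minima. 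Once we know this, we prove that, for each fixed $w'\in\mathfrak{S}_{\alpha',R'}$, the three generating functions of the increments over admissible insertions coincide, giving a common factor. Induction on the number of distinct letters then finishes the proof.

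For $(\textsf{des},\textsf{maj})$, inserting smallest letters is the standard Rawlings/Gessel-type insertion: inserting $1$s in slots of a word creates descents in a controlled way. The constraint that exactly $k_1$ ones lie to the right of every larger letter after the last non-minimal $1$ means that the last $k_1$ ones form a block-like tail structure. For $(\textsf{exc},\textsf{den})$, inserting the smallest values shifts the nondecreasing rearrangement and hence interacts with excedances. Here it seems more robust to use Han's bijection $\Phi$ with $(\textsf{des},\textsf{maj})(w)=(\textsf{exc},\textsf{den})(\Phi(w))$ and to verify directly that $\Phi$ preserves $\textsf{Rlwmin}$. Similarly, for $\textsf{mak}$ we would check that the Clarke--Steingr\'{\i}msson--Zeng bijection carrying $(\textsf{des},\textsf{maj})$ to $(\textsf{des},\textsf{mak})$ preserves $\textsf{Rlwmin}$. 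This is plausible because $\textsf{mak}$ is defined via the decomposition of $w$ into descent blocks, and the right-to-left minima structure governs the last letters of those blocks.

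The main obstacle is the $(\textsf{exc},\textsf{den})$ equality, because excedances are not local with respect to right-to-left minima. We first try to use the Foata--Zeilberger/Han framework: encode $w$ by its cycle-like "multiset permutation" via the two-line array $\binom{\overline{w}}{w}$, and then apply a Gessel--Reutenauer style decomposition. In this encoding, the weak right-to-left minima of $w$ are exactly the entries $w_i$ that are records relative to the tail. We would check that the excedance-preserving step of Han's map sends records to records. If direct preservation fails, we fall back on the inductive insertion argument sketched above. In that argument we would verify that, conditioned on $w'$, removing the maximal letter instead of the minimal one gives $q$-binomial increments with the same $t$-weights for both bistatistics.
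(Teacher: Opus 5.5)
Your second route is the right one, and it is the route the paper takes: take the known bijections and show that each preserves $\textsf{Rlwmin}$. However, your proposal never supplies those preservation arguments, and they are the entire content of the proof. Saying it is ``plausible'' and that ``we would check'' leaves the theorem unproved.

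You have also misidentified one map. The Clarke--Steingr\'{\i}msson--Zeng map $\Phi_\alpha$ sends $(\textsf{des},\textsf{mak},\textsf{mad})$ to $(\textsf{exc},\textsf{den},\textsf{inv})$, not $(\textsf{des},\textsf{maj})$ to $(\textsf{des},\textsf{mak})$. So the chain must be $(\textsf{des},\textsf{maj})\leftrightarrow(\textsf{exc},\textsf{den})\leftrightarrow(\textsf{des},\textsf{mak})$.

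Two ideas are concretely missing.

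(a) Han's map is the $r=1$ case of $H_{r\textsf{den}}$. You need the observation that it is a composition of operators $T_i$ on $\binom{\overline{w}}{w}$, each of which changes the bottom row by at most an adjacent swap $b_jb_{j+1}\mapsto b_{j+1}b_j$. The swap happens only when the top entries satisfy $x<y$, exactly one of $b_j,b_{j+1}$ lies in $\{x+1,\dots,y\}$, and $x$ occurs further right in the bottom row. Then the letter lying in $\{x+1,\dots,y\}$ exceeds a later letter and is never a weak right-to-left minimum, before or after the swap. The other letter is either also $>x$, or smaller than its partner, and in both cases its status is unchanged.

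(b) For $\Phi_\alpha$, you need only a one-sided statement $\textsf{Rlmin}(\pi)\subseteq\textsf{Rlmin}(\Phi(\pi))$ for permutations. A right-to-left minimum $c$ of $\pi$ is a non-descent top with embracing number $0$, so it lands in $g'$ with nothing smaller to its right there. Any $b<c$ in $f'$ is an excedance top of $\Phi(\pi)$, so its position is $<b<c\leqslant$ the position of $c$. Transfer this through $\operatorname{std}$ and $\operatorname{istd}_\alpha$ to get the multiset inclusion $\textsf{Rlwmin}(w)\subseteq\textsf{Rlwmin}(\Phi_\alpha(w))$. Then upgrade it to equality by iterating $\Phi_\alpha$ around its cycle in the finite set $\mathfrak{S}_\alpha$. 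Your remark about descent blocks and their last letters does not substitute for either step.

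Your fallback insertion argument is not just unverified; as stated it is false.

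First, the admissibility constraint is misdescribed. Every occurrence of the smallest letter is automatically a weak right-to-left minimum, so $k_1=\alpha_1$ always. What inserting the $1$'s actually does is strip minimum status from every letter to the left of the last $1$. Thus $\textsf{Rlwmin}(w)$ is $\{1^{\alpha_1}\}$ together with the weak minima of the suffix after the last $1$, which is not a function of $\textsf{Rlwmin}(w')$.

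Second, fiberwise equality over a fixed $w'$ fails. Take $\alpha=(1,1,1)$ and $R=\{1\}$, so $\mathfrak{S}_{\alpha,R}=\{231,321\}$. These words lie over different $w'$ (namely $23$ and $32$), and each fiber contains a single admissible word. Yet $(\textsf{des},\textsf{maj})(231)=(1,2)$ while $(\textsf{exc},\textsf{den})(231)=(2,3)$, and for $321$ the values are $(2,3)$ versus $(1,2)$. The distributions agree only after summing over $w'$, so there is no common per-$w'$ factor to extract.

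Your alternative of removing the largest letter comes with no insertion recursion for $(\textsf{exc},\textsf{den})$ that tracks $\textsf{Rlwmin}$, so it cannot serve as a fallback either.
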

For $r$-Euler--Mahonian statistics,
we establish the following refinement of  (\ref{r-eq-Euler-Mahonian}).
\begin{theorem}\label{Thm-main-r-Euler--Mahonian-Rlwmin}
For any multiset $R$ and positive integer $r$, we have
	\begin{align*}
		\sum_{w\in\mathfrak{S}_{\alpha,R}}t^{r\emph{\textsf{des}}(w)}q^{r\emph{\textsf{maj}}(w)}=
		\sum_{w\in\mathfrak{S}_{\alpha,R}}t^{r\emph{\textsf{exc}}(w)}q^{r\emph{\textsf{den}}(w)}.
	\end{align*}
\end{theorem}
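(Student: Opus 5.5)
We plan to prove Theorem \ref{Thm-main-r-Euler--Mahonian-Rlwmin} the same way we expect to prove Theorem \ref{Thm-main-Euler--Mahonian-Rlwmin}. Decompose each word by its weak right-to-left minima, show that both bistatistics split additively along this decomposition, and then invoke the unrestricted identity (\ref{r-eq-Euler-Mahonian}) on the pieces.

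\textbf{Step 1: the decomposition.} Write $R=\{c_1^{k_1},\dots,c_s^{k_s}\}$ with $c_1<\dots<c_s$. A word $w\in\mathfrak{S}_{\alpha,R}$ has the form
\[
w=u_1\,c_1\,u_2\,c_1\cdots,
\]
where the occurrences of the weak right-to-left minima appear in weakly increasing order from left to right. Each letter of a block lying between consecutive minima $c_j$ and $c_{j'}$ must be $>$ the next minimum to its right, suitably weakened at the boundaries. So $\mathfrak{S}_{\alpha,R}$ is in bijection with tuples of words over prescribed alphabets $\{c_j+1,\dots,m\}$ of prescribed total content. Because the contents of the blocks are not individually fixed, the cleaner route is a recursive, insertion-type decomposition. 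Remove all copies of the smallest letter $1$ (they are necessarily in $R$, since $R$ must contain $1$ as its last occurrence). Then track how $(r\textsf{des},r\textsf{maj})$ and $(r\textsf{exc},r\textsf{den})$ change as the $1$'s are inserted into the subword on $\{2,\dots,m\}$. The constraint that exactly $k_1$ copies of $1$ are weak right-to-left minima says that the last $k_1$ copies of $1$ form a suffix-type block after the last letter $>1$, except where they interleave with later minima.

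\textbf{Step 2: generating-function recursion.} For each bistatistic, derive a recursion of the shape
\[
F_{\alpha,R}(t,q)=\Phi\bigl(t,q;\alpha_1,k_1\bigr)\,F_{\alpha',R'}(t,q),
\]
where $\alpha'$ and $R'$ are obtained by deleting the $1$'s. Show that the factor $\Phi$ is the same for both bistatistics. For $(r\textsf{des},r\textsf{maj})$ we will use Rawlings' insertion lemma: inserting small letters into the allowed slots shifts $r\textsf{maj}$ by a $q$-binomial-like amount that depends only on the number of slots. For $(r\textsf{exc},r\textsf{den})$ we will use the Huang--Lin--Yan description of $r\textsf{den}$ via excedance and non-excedance letters. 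Since $1$ is never an excedance letter and positions holding the trailing $1$'s are fixed points, the trailing $k_1$ copies contribute nothing, and the remaining $\alpha_1-k_1$ copies are inserted freely among non-final slots. An alternative is to show that the restriction to $\mathfrak{S}_{\alpha,R}$ is obtained from the unrestricted sums via the factorization that sums over $R$. We would then use (\ref{r-eq-Euler-Mahonian}) on smaller alphabets to conclude by induction on $m$.

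\textbf{Step 3: induction.} For the base case $m=1$, the identity is trivial. Iterating the recursion gives equality of the two sides for every $R$.

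\textbf{Main obstacle.} The hard step is showing that $r\textsf{den}$ changes under insertion of the non-minimal copies of $1$ by exactly the same distribution as $r\textsf{maj}$ does, jointly with $r\textsf{exc}$ versus $r\textsf{des}$. Excedances depend on the sorted rearrangement $\overline{w}$, so inserting $1$'s shifts every position. The first thing we would try is to insert the $1$'s at the \emph{front} of the sorted reference and verify that all existing excedances are preserved, since each letter's reference value decreases or stays the same. Failing that, we would construct an explicit bijection $\mathfrak{S}_{\alpha,R}\to\mathfrak{S}_{\alpha,R}$ by restricting the known Huang--Lin--Yan bijection and checking that it preserves $\textsf{Rlwmin}$.
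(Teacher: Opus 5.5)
Your main route (Steps 1--3) does not work as set up. Every copy of the smallest letter $1$ is automatically a weak right-to-left minimum, since everything to its right is $\geqslant 1$. So $k_1=\alpha_1$ always, and there are no ``remaining $\alpha_1-k_1$ copies''; the trailing-block picture you describe is the one that holds for the \emph{largest} letter $m$.

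More seriously, deleting the $1$'s does not map $\mathfrak{S}_{\alpha,R}$ into a single $\mathfrak{S}_{\alpha',R'}$. A letter with a $1$ to its right is not in $\textsf{Rlwmin}(w)$ but may be in $\textsf{Rlwmin}(w')$ (e.g.\ $w=213$, $w'=23$). Conversely, the insertions of $1$ into a single $w'$ land in several classes ($123$, $213$, $231$). So no product formula $F_{\alpha,R}=\Phi(t,q;\alpha_1,k_1)F_{\alpha',R'}$ exists.

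Deleting $m$ instead does give $\textsf{Rlwmin}(w)=\textsf{Rlwmin}(w')\cup\{m^k\}$, with $k$ the number of trailing $m$'s. But the effect of an insertion on $r\textsf{des}$ depends on $w'$, so at best you get a linear operator in $t$, not a scalar factor. On the $(r\textsf{exc},r\textsf{den})$ side, an insertion shifts the whole reference row $\overline{w}$: inserting $1$ into $23$ can give $231$, which has two excedances while $23$ has none. Showing that the same operator governs $(r\textsf{exc},r\textsf{den})$ would amount to reproving the Huang--Lin--Yan theorem from scratch. Nor does the unrestricted identity (\ref{r-eq-Euler-Mahonian}) on smaller alphabets supply the restricted one.

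Your fallback, restricting $H_{r\textsf{den}}$, is exactly the paper's strategy. However, you give no reason why $H_{r\textsf{den}}$ preserves $\textsf{Rlwmin}$, and that verification is the entire proof. The missing observation is local:

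\begin{itemize}
\item During the construction of $\Gamma_{r\textsf{den}}(w)$, the bottom row evolves from $w$ to $H_{r\textsf{den}}(w)$ only through operators $T^r_j$. Each of these either fixes the bottom row or swaps adjacent letters $b_j,b_{j+1}$.
\item When a swap occurs, the top letters satisfy $x<y$, the letter $x$ occurs among $b_{j+2},\dots,b_n$, and exactly one of $b_j,b_{j+1}$ lies in $\{x+r,\dots,y+r-1\}$.
\item The letter lying in this interval exceeds $x$, which appears later, so it is a weak right-to-left minimum in neither word.
\item The other letter is either $\geqslant y+r>x$, excluded for the same reason, or $<x+r$. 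In the latter case it is smaller than its neighbour, so passing that neighbour does not change its status.
\end{itemize}

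Hence every step preserves $\textsf{Rlwmin}$. Together with the trivial case $r\geqslant m$, where both pairs reduce to $(0,\textsf{inv})$, this proves the theorem.
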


The equidistribution results needed for Theorems \ref{Thm-main-Mahonian-Rlwmin}--\ref{Thm-main-r-Euler--Mahonian-Rlwmin} are established in Sections \ref{section-inv-majd}--\ref{Section-tri}.

\subsection{Several immediate corollaries}
Throughout this paper, we use $\Uplusmed$ to denote disjoint union.
Clearly, 	
\begin{align*} 
	\mathfrak{S}_{\alpha}=
	\Uplusmed_{R}		
	\mathfrak{S}_{\alpha,R}.
\end{align*}
Combining this with Theorems \ref{Thm-main-Mahonian-Rlwmin}--\ref{Thm-main-r-Euler--Mahonian-Rlwmin},
we obtain the following corollary.
\begin{corollary}\label{Corollary-weak-Stirling-(Euler-)Mahonian}
For any positive integer $r$, 
the statistics within each of the following three classes are equidistributed on $\mathfrak{S}_{\alpha}$:

{1.}
$(\emph{\textsf{inv}},\emph{\textsf{rlwmin}})$,
$(\emph{\textsf{maj}},\emph{\textsf{rlwmin}})$,
$(\emph{\textsf{den}},\emph{\textsf{rlwmin}})$,
$(\emph{\textsf{mak}},\emph{\textsf{rlwmin}})$,
$(\emph{\textsf{mad}},\emph{\textsf{rlwmin}})$,
$(\emph{\textsf{inv}}_{r},\emph{\textsf{rlwmin}})$,
$(r\emph{\textsf{maj}},\emph{\textsf{rlwmin}})$, and
$(r\emph{\textsf{den}},\emph{\textsf{rlwmin}})$;

{2.}
$(\emph{\textsf{rlwmin}},\emph{\textsf{des}},\emph{\textsf{maj}})$,
$(\emph{\textsf{rlwmin}},\emph{\textsf{exc}},\emph{\textsf{den}})$, and 
$(\emph{\textsf{rlwmin}},\emph{\textsf{des}},\emph{\textsf{mak}})$;

{3.}
$(\emph{\textsf{rlwmin}},r\emph{\textsf{des}},r\emph{\textsf{maj}})$ and
$(\emph{\textsf{rlwmin}},r\emph{\textsf{exc}},r\emph{\textsf{den}})$.
\end{corollary}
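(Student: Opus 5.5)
The corollary follows from Theorems \ref{Thm-main-Mahonian-Rlwmin}--\ref{Thm-main-r-Euler--Mahonian-Rlwmin} by summing over the disjoint decomposition $\mathfrak{S}_{\alpha}=\Uplusmed_{R}\mathfrak{S}_{\alpha,R}$. On each class $\mathfrak{S}_{\alpha,R}$ the statistic $\textsf{rlwmin}$ is constant, equal to $|R|$, because $\textsf{Rlwmin}(w)=R$ for every $w$ in the class. So for any Mahonian statistic $\textsf{st}$ in the first list we write
\begin{align*}
\sum_{w\in\mathfrak{S}_{\alpha}}p^{\textsf{rlwmin}(w)}q^{\textsf{st}(w)}
=\sum_{R}p^{|R|}\sum_{w\in\mathfrak{S}_{\alpha,R}}q^{\textsf{st}(w)}.
\end{align*}
The sum ranges over the finitely many multisets $R$ for which $\mathfrak{S}_{\alpha,R}\neq\emptyset$.

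By Theorem \ref{Thm-main-Mahonian-Rlwmin}, the inner sum does not depend on which of the eight statistics \textsf{inv}, \textsf{maj}, \textsf{den}, \textsf{mak}, \textsf{mad}, $\textsf{inv}_r$, $r\textsf{maj}$, $r\textsf{den}$ is used, for the same fixed $r$. Hence the left-hand sides agree, which gives class 1.

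Class 2 is handled the same way. We replace the inner sum by $\sum_{w\in\mathfrak{S}_{\alpha,R}}t^{\textsf{des}(w)}q^{\textsf{maj}(w)}$ and its two counterparts $(\textsf{exc},\textsf{den})$ and $(\textsf{des},\textsf{mak})$, and invoke Theorem \ref{Thm-main-Euler--Mahonian-Rlwmin} to see that these inner sums coincide for each $R$. Class 3 follows identically from Theorem \ref{Thm-main-r-Euler--Mahonian-Rlwmin}, using the weight $p^{|R|}t^{r\textsf{des}}q^{r\textsf{maj}}$ versus $p^{|R|}t^{r\textsf{exc}}q^{r\textsf{den}}$.

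There is no real obstacle here. The only point to check is that $\textsf{rlwmin}$ is determined by $R$, which holds by definition since $\textsf{rlwmin}(w)=|\textsf{Rlwmin}(w)|$. The argument in fact gives the stronger joint equidistribution with the full multiset $\textsf{Rlwmin}$, not just with its cardinality.
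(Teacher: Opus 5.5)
Your proof is correct and follows the paper's argument: decompose $\mathfrak{S}_{\alpha}$ as the disjoint union of the classes $\mathfrak{S}_{\alpha,R}$, observe that $\textsf{rlwmin}=|R|$ is constant on each class, and apply Theorems \ref{Thm-main-Mahonian-Rlwmin}--\ref{Thm-main-r-Euler--Mahonian-Rlwmin} class by class. Your closing remark is also right: the same argument gives joint equidistribution with the full multiset $\textsf{Rlwmin}$, which the paper's theorems already contain implicitly.
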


Let $\mathfrak{S}_{\alpha,D}^{\text{s}}$ be the set of words $w\in\mathfrak{S}_{\alpha}$ such that $\textsf{Rlmin}(w)=D$.
Clearly,
\begin{align*}
	\mathfrak{S}_{\alpha,D}^{\text{s}}=
	\Uplusmed_{\text{supp}(R)=D}		
	\mathfrak{S}_{\alpha,R}.	
\end{align*}
Combining this with Theorems \ref{Thm-main-Mahonian-Rlwmin}, \ref{Thm-main-Euler--Mahonian-Rlwmin}, and \ref{Thm-main-r-Euler--Mahonian-Rlwmin},
we immediately obtain  the following corollary.
\begin{corollary}\label{Corollary-main-Rlmin}
For any positive integer $r$, 
the statistics within each of the following three classes are equidistributed on $\mathfrak{S}_{\alpha,D}^{\emph{\text{s}}}$:

1. 
\emph{\textsf{inv}}, \emph{\textsf{maj}}, \emph{\textsf{den}}, \emph{\textsf{mak}}, \emph{\textsf{mad}},
$\emph{\textsf{inv}}_{r}$, $r\emph{\textsf{maj}}$, and $r$\emph{\textsf{den}}; 

2. $(\emph{\textsf{des}},\emph{\textsf{maj}})$,
$(\emph{\textsf{exc}},\emph{\textsf{den}})$,
and $(\emph{\textsf{des}},\emph{\textsf{mak}})$;

3. 
$(r\emph{\textsf{des}},r\emph{\textsf{maj}})$ and
$(r\emph{\textsf{exc}},r\emph{\textsf{den}})$.

\noindent Then, on $\mathfrak{S}_{\alpha}$, the statistics within each of the following three classes are equidistributed:

{1.}
$(\emph{\textsf{inv}},\emph{\textsf{rlmin}})$,
$(\emph{\textsf{maj}},\emph{\textsf{rlmin}})$,
$(\emph{\textsf{den}},\emph{\textsf{rlmin}})$,
$(\emph{\textsf{mak}},\emph{\textsf{rlmin}})$,
$(\emph{\textsf{mad}},\emph{\textsf{rlmin}})$,
$(\emph{\textsf{inv}}_{r},\emph{\textsf{rlmin}})$,
$(r\emph{\textsf{maj}},\emph{\textsf{rlmin}})$, and
$(r\emph{\textsf{den}},\emph{\textsf{rlmin}})$;

{2.}
$(\emph{\textsf{rlmin}},\emph{\textsf{des}},\emph{\textsf{maj}})$,
$(\emph{\textsf{rlmin}},\emph{\textsf{exc}},\emph{\textsf{den}})$, and 
$(\emph{\textsf{rlmin}},\emph{\textsf{des}},\emph{\textsf{mak}})$;

{3.}
$(\emph{\textsf{rlmin}},r\emph{\textsf{des}},r\emph{\textsf{maj}})$ and
$(\emph{\textsf{rlmin}},r\emph{\textsf{exc}},r\emph{\textsf{den}})$.
\end{corollary}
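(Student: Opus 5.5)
The statement is Corollary \ref{Corollary-main-Rlmin}. It follows formally from Theorems \ref{Thm-main-Mahonian-Rlwmin}--\ref{Thm-main-r-Euler--Mahonian-Rlwmin} together with the disjoint-union decomposition $\mathfrak{S}_{\alpha,D}^{\text{s}}=\Uplusmed_{\text{supp}(R)=D}\mathfrak{S}_{\alpha,R}$ recorded just before it. So the plan is simply to sum the refined identities over the appropriate index sets.

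First, I check the decomposition itself. For every $w$ we have $\text{supp}(\textsf{Rlwmin}(w))=\textsf{Rlmin}(w)$. A letter $w_i$ that is a weak right-to-left minimum but not a strict one has a later equal letter. The last occurrence of that value is then a strict right-to-left minimum, since every later letter is at least $w_i$ and none equals it. Conversely, every strict right-to-left minimum is weak. Hence $w\in\mathfrak{S}_{\alpha,D}^{\text{s}}$ if and only if $\textsf{Rlwmin}(w)=R$ for a unique multiset $R$ with $\text{supp}(R)=D$. The union is disjoint because each $w$ determines $R$, and there are only finitely many nonempty pieces.

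For the first part, fix $D$ and any two statistics $\textsf{s},\textsf{s}'$ from class 1. Theorem \ref{Thm-main-Mahonian-Rlwmin} gives $\sum_{w\in\mathfrak{S}_{\alpha,R}}q^{\textsf{s}(w)}=\sum_{w\in\mathfrak{S}_{\alpha,R}}q^{\textsf{s}'(w)}$ for each $R$. Summing over all $R$ with $\text{supp}(R)=D$ gives the equality on $\mathfrak{S}_{\alpha,D}^{\text{s}}$. The same summation applied to the bivariate identities of Theorems \ref{Thm-main-Euler--Mahonian-Rlwmin} and \ref{Thm-main-r-Euler--Mahonian-Rlwmin} yields classes 2 and 3.

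For the second part, note that $\textsf{rlmin}(w)=|D|$ is constant on $\mathfrak{S}_{\alpha,D}^{\text{s}}$ and that $\mathfrak{S}_\alpha=\Uplusmed_{D}\mathfrak{S}_{\alpha,D}^{\text{s}}$. For two statistics $\textsf{s},\textsf{s}'$ in class 1, we multiply each first-part identity by $x^{|D|}$ and sum over all $D$. This gives $\sum_{w\in\mathfrak{S}_\alpha}x^{\textsf{rlmin}(w)}q^{\textsf{s}(w)}=\sum_{w\in\mathfrak{S}_\alpha}x^{\textsf{rlmin}(w)}q^{\textsf{s}'(w)}$. Classes 2 and 3 follow in the same way, with a factor $t^{\textsf{des}}$, $t^{\textsf{exc}}$, $t^{r\textsf{des}}$ or $t^{r\textsf{exc}}$ included.

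There is no real obstacle here, since all the substantive work lies in the three theorems. The only point needing care is the support identity $\text{supp}(\textsf{Rlwmin}(w))=\textsf{Rlmin}(w)$, which is what makes the decomposition valid, and I have justified it above.
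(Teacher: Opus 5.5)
Your proposal is correct and matches the paper's argument: the paper also derives the corollary by summing the identities of Theorems \ref{Thm-main-Mahonian-Rlwmin}--\ref{Thm-main-r-Euler--Mahonian-Rlwmin} over the disjoint decomposition $\mathfrak{S}_{\alpha,D}^{\text{s}}=\Uplusmed_{\text{supp}(R)=D}\mathfrak{S}_{\alpha,R}$, and then sums over $D$, using that $\textsf{rlmin}=|D|$ is constant on each piece. Your explicit check of $\text{supp}(\textsf{Rlwmin}(w))=\textsf{Rlmin}(w)$ fills in a step the paper only calls clear.
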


Let $\mathfrak{S}_{n}$ be the set of permutations of $[n]$, that is, $\mathfrak{S}_{n}=\mathfrak{S}_{\alpha}$ with $\alpha=(1,1,\ldots,1)$.
A pair of permutation statistics $(\textsf{st}_{1},\textsf{st}_{2})$  is called \emph{Mahonian--Stirling} if
$$\sum_{\pi\in\mathfrak{S}_{n}}q^{{\textsf{st}_{1}}(\pi)}x^{\textsf{st}_{2}(\pi)}
=x(x+q)(x+q+q^{2})\ldots(x+q+\cdots+q^{n-1}).$$
Bj\"{o}rner and Wachs \cite{Bjorner-1991} proved that  $(\textsf{inv},\textsf{rlmin})$ and
$(\textsf{maj},\textsf{rlmin})$ are Mahonian--Stirling.
Petersen \cite{Petersen-2011} obtained another Mahonian--Stirling pair  
$(\textsf{sor},\textsf{cyc})$.
Taking $\alpha=(1,1,\dots,1)$ in Corollary \ref{Corollary-main-Rlmin},  
so that $\mathfrak{S}_\alpha=\mathfrak{S}_n$, we obtain the following results.

1. The pairs
$({\textsf{inv}},{\textsf{rlmin}})$,
$({\textsf{maj}},{\textsf{rlmin}})$,
$({\textsf{den}},{\textsf{rlmin}})$,
$({\textsf{mak}},{\textsf{rlmin}})$,
$({\textsf{mad}},{\textsf{rlmin}})$,
$({\textsf{inv}}_{r},{\textsf{rlmin}})$,
$(r{\textsf{maj}},{\textsf{rlmin}})$, and
$(r{\textsf{den}},{\textsf{rlmin}})$ are Mahonian--Stirling.

2. The triples
$({\textsf{rlmin}},{\textsf{des}},{\textsf{maj}})$,
$({\textsf{rlmin}},{\textsf{exc}},{\textsf{den}})$, and 
$({\textsf{rlmin}},{\textsf{des}},{\textsf{mak}})$ are equidistributed on $\mathfrak{S}_{n}$.
This result was first obtained by Butler \cite{Butler-2023}, 
who referred to such triples of permutation statistics \emph{Stirling--Euler--Mahonian}.
In Corollary \ref{Cor-Mahonian--Stirling-D_{n,S}}, we give another type of Stirling--Euler--Mahonian statistics.

3. The triples
$({\textsf{rlmin}},r{\textsf{des}},r{\textsf{maj}})$ and
$({\textsf{rlmin}},r{\textsf{exc}},r{\textsf{den}})$ are equidistributed on $\mathfrak{S}_{n}$. 
This refines the equidistribution of
$(r{\textsf{des}},r{\textsf{maj}})$ and
$(r{\textsf{exc}},r{\textsf{den}})$ on $\mathfrak{S}_{n}$ proved in \cite{Liu-2024}.

\section{Mahonian statistics on permutations with a fixed descent set}\label{Section-restricted-permutations}
\subsection{The main results}
In this section, we focus on permutations, which are a special case of words. 
Hence, the statistics defined on words apply naturally to permutations.
Let $S=\{s_{1},s_{2},\ldots,s_{k}\}_{<}\subseteq[n-1]$,
i.e., $1\leqslant s_{1}<s_{2}<\cdots<s_{k}\leqslant n-1$.
Recall that $\mathfrak{S}_{n}$ is the set of permutations of $[n]$.
We denote
\begin{align*} 
	\mathcal{D}_{n}^{\smallsubseteq}(S)&=\{\pi\in\mathfrak{S}_{n}:\textsf{Des}(\pi)\subseteq S\},\\
	\mathcal{D}_{n}^{\smalleq}(S)&=\{\pi\in\mathfrak{S}_{n}:\textsf{Des}(\pi)= S\}.	
\end{align*}
These are the basic subsets of $\mathfrak{S}_{n}$. 
In Stanley's book \cite{Stanley-2012-book} and B\'{o}na's book \cite{Bona-2022},
the cardinalities of these sets are denoted by $\alpha_{n}(S)$ and $\beta_{n}(S)$, respectively. 
By the Principle of Inclusion-Exclusion,
\begin{align*} 
	|\mathcal{D}^{\smallsubseteq}_{n}(S)|&=\sum_{T\subseteq S}|\mathcal{D}^{\smalleq}_{n}(T)|,\\
	|\mathcal{D}^{\smalleq}_{n}(S)|&=\sum_{T\subseteq S}(-1)^{|S-T|}|\mathcal{D}^{\smallsubseteq}_{n}(T)|.
\end{align*}
The cardinalities of these two sets are given by
\begin{equation}\label{eq-MacMahon}
\begin{aligned} 
	|\mathcal{D}^{\smallsubseteq}_{n}(S)|&
	=\binom{n}{s_{1},s_{2}-s_{1},\dots,n-s_{k}},\\
	|\mathcal{D}^{\smalleq}_{n}(S)|&
	=\det\!\left(\binom{n-s_{i}}{s_{j+1}-s_{i}\,} \right)_{0 \leqslant i,j \leqslant k},
\end{aligned}
\end{equation}
with $s_{0}=0$ and $s_{k+1}=n$. 
The distributions of \textsf{inv} on these two sets are given by
\begin{equation}\label{eq-Stanley}
\begin{aligned} 
\sum_{\pi\in\mathcal{D}_{n}^{\tinysubseteq}(S)}q^{\textsf{inv}(\pi)}
&=\genfrac{[}{]}{0pt}{}{n}{s_{1},s_{2}-s_{1},\dots,n-s_{k}}_{q},
\\
\sum_{\pi\in\mathcal{D}_{n}^{\tinyeq}(S)}q^{\textsf{inv}(\pi)}
&=\det\!\left(\genfrac{[}{]}{0pt}{}{n-s_{i}}{s_{j+1}-s_{i}}_{q}\right)_{0 \leqslant i,j \leqslant k}.
\end{aligned}
\end{equation}
The formulas in (\ref{eq-MacMahon}) were first obtained by MacMahon \cite{MacMahon-1916-book}; 
see also \cite[Example 2.2.4]{Stanley-2012-book}. 
The formulas in (\ref{eq-Stanley}) are due to Stanley \cite{Stanley-1976}; 
see also \cite[Example 2.2.5]{Stanley-2012-book}.
In the above two determinant formulas, we adopt the conventions that
\[
\binom{i}{j}=0
\quad \text{and} \quad
\genfrac{[}{]}{0pt}{}{i}{j}_q=0
\qquad \text{whenever } j<0 \text{ or } j>i.
\]

The classical Mahonian statistic \textsf{imaj} on $\mathfrak{S}_{n}$ is defined as
$$\textsf{imaj}(\pi)=\textsf{maj}(\pi^{-1}),$$ 
where $\pi^{-1}$ is the \emph{inverse} of $\pi$, that is, 
$\pi^{-1}_{j}=i$ if and only if $\pi_{i}=j$.
Foata and Sch\"{u}tzenberger \cite{Foata-1978} proved that 
for any $S\subseteq[n-1]$, 
the statistics $\textsf{inv}$ and $\textsf{imaj}$
are equidistributed on $\mathcal{D}_{n}^{\smalleq}(S)$:
\begin{align}\label{Foata-Schutzenberger}
\sum_{\pi\in\mathcal{D}_{n}^{\tinyeq}(S)}q^{\textsf{inv}(\pi)}
=\sum_{\pi\in\mathcal{D}_{n}^{\tinyeq}(S)}q^{\textsf{imaj}(\pi)}.
\end{align}
By analogy with the definition of \textsf{imaj},
for any permutation statistic $\textsf{st}$,
define the permutation statistic $\textsf{ist}$ by
$$\textsf{ist}(\pi)=\textsf{st}(\pi^{-1}),$$
for any $\pi\in\mathfrak{S}_{n}$.

We consider the Mahonian statistics listed in Table \ref{Table-1}, excluding \textsf{\footnotesize Z}, 
since it coincides with $\mathsf{inv}$ on permutations. 
The first result of this section establishes the equidistribution of five Mahonian statistics on $D_n^{\smallsubseteq}(S)$ and $D_n^{=}(S)$.
\begin{theorem}\label{Thm-Mahonian-permutation-fix-Des}
	Let $S=\{s_{1},s_{2},\ldots,s_{k}\}_{<}\subseteq[n-1]$, 
	and set $s_{0}=0$ and $s_{k+1}=n$.
	Let $r\geqslant1$. We have
    \begin{equation}\label{Thm-Mahonian-permutation-fix-Des-eq-1}
	\begin{aligned}
    &\sum_{\pi\in\mathcal{D}_{n}^{\tinysubseteq}(S)}q^{\emph{\textsf{inv}}(\pi)}
	=\sum_{\pi\in\mathcal{D}_{n}^{\tinysubseteq}(S)}q^{\emph{\textsf{imaj}}(\pi)}
	=\sum_{\pi\in\mathcal{D}_{n}^{\tinysubseteq}(S)}q^{\emph{\textsf{imak}}(\pi)}
	=\sum_{\pi\in\mathcal{D}_{n}^{\tinysubseteq}(S)}q^{\emph{\textsf{iinv}}_{r}(\pi)}
	=\sum_{\pi\in\mathcal{D}_{n}^{\tinysubseteq}(S)}q^{\emph{\textsf{istat}}(\pi)}\\
	&=\genfrac{[}{]}{0pt}{}{n}{s_{1},s_{2}-s_{1},\dots,n-s_{k}}_{q},
	\end{aligned}
    \end{equation}
	and
	\begin{equation}\label{Thm-Mahonian-permutation-fix-Des-eq-2}
	\begin{aligned}
	&\sum_{\pi\in\mathcal{D}_{n}^{\tinyeq}(S)}q^{\emph{\textsf{inv}}(\pi)}
	=\sum_{\pi\in\mathcal{D}_{n}^{\tinyeq}(S)}q^{\emph{\textsf{imaj}}(\pi)}
	=\sum_{\pi\in\mathcal{D}_{n}^{\tinyeq}(S)}q^{\emph{\textsf{imak}}(\pi)}
	=\sum_{\pi\in\mathcal{D}_{n}^{\tinyeq}(S)}q^{\emph{\textsf{iinv}}_{r}(\pi)}
	=\sum_{\pi\in\mathcal{D}_{n}^{\tinyeq}(S)}q^{\emph{\textsf{istat}}(\pi)}\\
	&=\det\!\left(\genfrac{[}{]}{0pt}{}{n-s_{i}}{s_{j+1}-s_{i}}_{q} \right)_{0 \leqslant i,j \leqslant k}.
   \end{aligned}
   \end{equation}
\end{theorem}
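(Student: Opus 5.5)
The plan is to transfer the problem from $\pi$ to $\pi^{-1}$ and then to words via standardization. If $\textsf{Des}(\pi)\subseteq S$, then $\pi$ is increasing on each block $(s_{i-1},s_i]$. Hence $\pi^{-1}$ is a shuffle of the increasing sequences $s_{i-1}+1,\dots,s_i$. Define $w=\phi(\pi)\in\mathfrak{S}_{\alpha}$, with $\alpha=(s_1,s_2-s_1,\dots,n-s_k)$, by replacing each letter $j$ of $\pi^{-1}$ with the index $i$ of the block containing $j$. Then $\phi$ is a bijection from $\mathcal{D}_n^{\subseteq}(S)$ onto $\mathfrak{S}_{\alpha}$, and its inverse is standardization (equal letters are labelled increasingly from left to right).

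The key structural observation is that for each statistic $\textsf{st}\in\{\textsf{inv},\textsf{maj},\textsf{mak},\textsf{inv}_r,\textsf{stat}\}$ the word version is defined so that $\textsf{st}(w)=\textsf{st}(\textsf{std}(w))$. I will check this definition by definition, using that the word definitions treat equal letters as non-descents and non-inversions, exactly as standardization does. Granting this, $\textsf{ist}(\pi)=\textsf{st}(\pi^{-1})=\textsf{st}(\phi(\pi))$. So the distribution of $\textsf{ist}$ over $\mathcal{D}_n^{\subseteq}(S)$ is the distribution of $\textsf{st}$ over $\mathfrak{S}_\alpha$.

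\begin{itemize}
\item For $\textsf{inv}$: $\textsf{iinv}=\textsf{inv}$, and the claimed formula is the first line of (\ref{eq-Stanley}).
\item For $\textsf{maj}$: MacMahon's identity (\ref{eq-inv-maj-S_M}) applies.
\item For $\textsf{mak}$ and $\textsf{inv}_r$: the word Mahonian results from Table \ref{Table-1} (Clarke--Steingr\'{\i}msson--Zeng and Kadell) apply; these are the specializations of Theorem \ref{Thm-main-Mahonian-Rlwmin} summed over all $R$.
\item For $\textsf{stat}$: I use Kitaev--Vajnovszki's result that $\textsf{stat}$ is Mahonian on words.
\end{itemize}

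This yields (\ref{Thm-Mahonian-permutation-fix-Des-eq-1}).

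For (\ref{Thm-Mahonian-permutation-fix-Des-eq-2}) I will use inclusion--exclusion. Since $\mathcal{D}_n^{=}(S)=\mathcal{D}_n^{\subseteq}(S)\setminus\bigcup_{T\subsetneq S}\mathcal{D}_n^{\subseteq}(T)$, Möbius inversion on the Boolean lattice gives, for each $\textsf{st}$,
\[
\sum_{\pi\in\mathcal{D}_{n}^{\tinyeq}(S)}q^{\textsf{ist}(\pi)}=\sum_{T\subseteq S}(-1)^{|S-T|}\sum_{\pi\in\mathcal{D}_{n}^{\tinysubseteq}(T)}q^{\textsf{ist}(\pi)} .
\]
By the first part, the right-hand side does not depend on the choice of statistic. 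So all five distributions on $\mathcal{D}_n^{=}(S)$ coincide with that of $\textsf{inv}$, which is the determinant in (\ref{eq-Stanley}).

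The main obstacle is verifying the standardization compatibility $\textsf{st}(w)=\textsf{st}(\textsf{std}(w))$ for $\textsf{mak}$, $\textsf{inv}_r$ and $\textsf{stat}$. The word versions of these statistics are defined through specific conventions, such as descent bottoms and tops for $\textsf{mak}$, the threshold $r$ for $\textsf{inv}_r$, and vincular patterns for $\textsf{stat}$. I must confirm that ties between equal letters are always treated like the left-to-right increasing order that standardization imposes. Here I will go through the pattern-based definitions one at a time. If some statistic turns out not to be compatible, I would instead take a direct bijection on words preserving the statistic from the literature, or compose $\phi$ with reversal or complement, whichever makes the ties break consistently.
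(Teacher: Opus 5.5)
Your proposal is correct and is essentially the paper's proof. Your $\phi$ coincides with the paper's $\theta$, and the identity $\operatorname{std}(\phi(\pi))=\pi^{-1}$ is Proposition~\ref{proposition-theta-pi-w} (strictly, the inverse of $\phi$ is $w\mapsto\operatorname{std}(w)^{-1}$ rather than standardization itself); the paper likewise combines the standardness of \textsf{inv}, \textsf{maj}, \textsf{mak}, $\textsf{inv}_r$, \textsf{stat} with their being Mahonian on $\mathfrak{S}_\alpha$ and then applies inclusion--exclusion. The standardness you flag as the main obstacle is taken from the literature in Lemma~\ref{Lemma-inv-maj-majd-mak-stat} (Clarke--Steingr\'{\i}msson--Zeng for \textsf{mak}, Fu et al.\ for \textsf{stat}), so your fallback plan is never needed.
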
 
\begin{remark}\label{Remark-irmaj-imad-iden-Des}
\begin{samepage}	
None of the statistics \textsf{iden}, \textsf{imad},  \textsf{isor},  $\textsf{i}r\textsf{maj}$, or $\textsf{i}r\textsf{den}$  is equidistributed with \textsf{inv} on either $\mathcal{D}_{n}^{\smallsubseteq}(S)$ or $\mathcal{D}_{n}^{\smalleq}(S)$.
See Remark \ref{remark-reason} for an explanation.
\end{samepage}	
\end{remark}

\vspace{1.8mm}

Given a permutation $\pi=\pi_{1}\pi_{2}\ldots\pi_{n}\in\mathfrak{S}_{n}$,
a letter $\pi_{i}$ is called a \emph{left-to-right maximum} if $i=1$ or $\pi_{i}>\pi_{j}$ for every $j<i$. 
In the literature, left-to-right maxima are also referred to as \emph{records} or \emph{outstanding elements}. 
Let $\textsf{Lrmax}(\pi)$ denote the set of left-to-right maxima of $\pi$,
and let $\textsf{PLrmax}(\pi)$ denote the set of \emph{positions}  of the left-to-right maxima of $\pi$.
Let $\textsf{lrmax}(\pi)=|\textsf{Lrmax}(\pi)|=|\textsf{PLrmax}(\pi)|.$
Define
\begin{align*} 
\mathcal{D}_{n,P}^{\smallsubseteq}(S)
=\{\pi\in\mathfrak{S}_{n}:\textsf{PLrmax}(\pi)=P,~\textsf{Des}(\pi)\subseteq S\},\\
\mathcal{D}_{n,P}^{\smalleq}(S)
=\{\pi\in\mathfrak{S}_{n}:\textsf{PLrmax}(\pi)=P,~\textsf{Des}(\pi)= S\}.
\end{align*}
The second result of this section provides a refinement of the equidistribution of the first four statistics in Theorem \ref{Thm-Mahonian-permutation-fix-Des},
namely \textsf{inv}, \textsf{imaj}, \textsf{imak}, and $\textsf{iinv}_{r}$.
\begin{theorem}\label{Thm-Mahonian-permutation-fix-Des-PLrmax}
	Let $S\subseteq[n-1]$, $P\subseteq[n]$, and $r\geqslant1$. Then
	\begin{equation}\label{eq-Thm-Mahonian-permutation-fix-Des-PLrmax-1}
	\sum_{\pi\in\mathcal{D}_{n,P}^{\tinysubseteq}(S)}q^{\emph{\textsf{inv}}(\pi)}
	=\sum_{\pi\in\mathcal{D}_{n,P}^{\tinysubseteq}(S)}q^{\emph{\textsf{imaj}}(\pi)}
	=\sum_{\pi\in\mathcal{D}_{n,P}^{\tinysubseteq}(S)}q^{\emph{\textsf{imak}}(\pi)}
	=\sum_{\pi\in\mathcal{D}_{n,P}^{\tinysubseteq}(S)}q^{\emph{\textsf{iinv}}_{r}(\pi)},
    \end{equation}
	and
	\begin{equation}\label{eq-Thm-Mahonian-permutation-fix-Des-PLrmax-2}
	\sum_{\pi\in\mathcal{D}_{n,P}^{\tinyeq}(S)}q^{\emph{\textsf{inv}}(\pi)}
	=\sum_{\pi\in\mathcal{D}_{n,P}^{\tinyeq}(S)}q^{\emph{\textsf{imaj}}(\pi)}
	=\sum_{\pi\in\mathcal{D}_{n,P}^{\tinyeq}(S)}q^{\emph{\textsf{imak}}(\pi)}
	=\sum_{\pi\in\mathcal{D}_{n,P}^{\tinyeq}(S)}q^{\emph{\textsf{iinv}}_{r}(\pi)}.
    \end{equation}	
\end{theorem}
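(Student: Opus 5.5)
The natural route is to transfer everything to the inverse permutation and reduce to Theorem \ref{Thm-main-Mahonian-Rlwmin} (and its companion for \textsf{mak}) applied to words. The standard correspondence is as follows. For $\pi\in\mathfrak{S}_n$ and $S=\{s_1<\dots<s_k\}$, set $\alpha=(s_1,s_2-s_1,\dots,n-s_k)$. Let $w=\phi(\pi)\in\mathfrak{S}_\alpha$ be the word obtained from $\pi^{-1}$ by replacing each value $j$ with $b$ whenever $s_{b-1}<j\leqslant s_b$. Equivalently, $w_i=b$ when the letter $i$ lies in the $b$-th block of positions of $\pi$.

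The condition $\textsf{Des}(\pi)\subseteq S$ says exactly that $\pi$ increases on each block. Hence $\phi$ is a bijection from $\mathcal{D}_n^{\smallsubseteq}(S)$ onto $\mathfrak{S}_\alpha$, and $\pi^{-1}$ is the standardization of $w$. Inversions are preserved: $\textsf{inv}(\pi)=\textsf{inv}(\pi^{-1})=\textsf{inv}(w)$. Descents of $\pi^{-1}$ are the $i$ with $\pi^{-1}_i>\pi^{-1}_{i+1}$. Since equal letters of $w$ are standardized increasingly, these coincide with the descents of $w$, giving $\textsf{imaj}(\pi)=\textsf{maj}(w)$. I would check that the definitions of \textsf{mak} and $\textsf{inv}_r$ on words are compatible with standardization in the same way, so that $\textsf{imak}(\pi)=\textsf{mak}(w)$ and $\textsf{iinv}_r(\pi)=\textsf{inv}_r(w)$. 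This is a routine check once the definitions are written out.

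Next I would translate the two constraints. The positions $P$ of left-to-right maxima of $\pi$ form a set of values of $\pi^{-1}$ recorded at positions. A position $p$ is an lrmax position of $\pi$ iff $\pi_p>\pi_q$ for all $q<p$. Passing to $\pi^{-1}$, this says: the value $p$ at position $\pi_p$ exceeds every value appearing to its right, i.e., $p$ is a right-to-left maximum value of $\pi^{-1}$. The lrmax condition on $\pi$ thus becomes an rlmax condition on $w$, not the weak right-to-left minima used in Theorem \ref{Thm-main-Mahonian-Rlwmin}. Two fixes are available:
\begin{itemize}
\item apply the complement on values $b\mapsto m+1-b$, which reverses $\alpha$;
\item define $\phi$ via reversed blocks.
\end{itemize}
Either way, "the set of positions in $\pi$ that are lrmax" becomes a condition that letters of $w$ are weak right-to-left minima, with strict/weak adjusted according to how ties are standardized. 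Knowing which values $p$ are records, together with the block of each $p$, determines the multiset $\textsf{Rlwmin}(w)$, and conversely. So $P$ corresponds to fixing $R$, and $\mathcal{D}_{n,P}^{\smallsubseteq}(S)$ maps bijectively onto $\mathfrak{S}_{\alpha,R}$ for a suitable $R$.

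The main obstacle is that the complement reverses the statistics. I would therefore need the versions of Theorem \ref{Thm-main-Mahonian-Rlwmin} that hold after the symmetry is applied, or else re-derive the bijections underlying that theorem for the mirrored setting. I would carefully check which symmetry (complement, reversal, or reversal of the block order) turns lrmax into rlwmin while preserving \textsf{inv}, \textsf{maj}, \textsf{mak}, $\textsf{inv}_r$ up to an additive constant fixed on the class. With that done, (\ref{eq-Thm-Mahonian-permutation-fix-Des-PLrmax-1}) follows from Theorem \ref{Thm-main-Mahonian-Rlwmin}.

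For (\ref{eq-Thm-Mahonian-permutation-fix-Des-PLrmax-2}) I would use inclusion–exclusion over $T\subseteq S$:
\[
\sum_{\pi\in\mathcal{D}_{n,P}^{=}(S)}q^{\textsf{st}(\pi)}=\sum_{T\subseteq S}(-1)^{|S-T|}\sum_{\pi\in\mathcal{D}_{n,P}^{\subseteq}(T)}q^{\textsf{st}(\pi)}.
\]
Each inner sum is independent of $\textsf{st}\in\{\textsf{inv},\textsf{imaj},\textsf{imak},\textsf{iinv}_r\}$ by the first part, which gives the second identity.
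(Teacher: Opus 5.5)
There is a genuine gap: you translated the left-to-right maxima backwards. If $p\in\textsf{PLrmax}(\pi)$, then $\pi_q<\pi_p$ for all $q<p$. In $\pi^{-1}$ the value $q$ sits at position $\pi_q$, so every value smaller than $p$ lies to the left of $p$. Hence every value to the right of $p$ in $\pi^{-1}$ is \emph{larger} than $p$, which gives $\textsf{PLrmax}(\pi)=\textsf{Rlmin}(\pi^{-1})$, not a set of right-to-left maxima. For example, $\pi=213$ has $\textsf{PLrmax}(\pi)=\{1,3\}=\textsf{Rlmin}(\pi^{-1})$, while the only right-to-left maximum of $\pi^{-1}=213$ is $3$.

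Because of this error you introduce a complement/reversal step that is unnecessary and that you never carry out. It would also fail. Applied to the correct situation, it turns the right-to-left-minimum condition into a maximum condition. Moreover, on words with repeated letters the value complement $w\mapsto w^{c}$ does not send \textsf{maj} to a constant minus \textsf{maj}: on $\mathfrak{S}_{(2,1)}$, $\textsf{maj}(w)+\textsf{maj}(w^{c})$ equals $2,3,1$ for $w=112,121,211$. So Theorem \ref{Thm-main-Mahonian-Rlwmin} could not be invoked afterwards. As written, the reduction of (\ref{eq-Thm-Mahonian-permutation-fix-Des-PLrmax-1}) to Theorem \ref{Thm-main-Mahonian-Rlwmin} is not established.

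The correct route, which is the paper's, needs no symmetry. Take $w=\phi(\pi)$ and $\tau=\operatorname{std}(w)=\pi^{-1}$. Lemma \ref{lemma-std} says that for $i<j$ one has $\tau_i<\tau_j$ if and only if $w_i\leqslant w_j$. So the positions of the strict right-to-left minima of $\tau$ are exactly the positions of the weak right-to-left minima of $w$. This gives $\textsf{Rlwmin}(w)=\operatorname{istd}_\alpha(\textsf{Rlmin}(\pi^{-1}))=\operatorname{istd}_\alpha(\textsf{PLrmax}(\pi))$.

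You must also justify your ``and conversely''. Since $\pi$ increases on each block $\{s_{j-1}+1,\ldots,s_j\}$, the set $P\cap\{s_{j-1}+1,\ldots,s_j\}$ is a terminal segment of that block. Hence $P$ is determined by the multiset $\operatorname{istd}_\alpha(P)$. Without this injectivity, $\mathfrak{S}_{\alpha,R}$ could be the image of several classes $\mathcal{D}_{n,P}^{\subseteq}(S)$, and equidistribution on each single class would not follow.

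With these two points, $\phi$ restricts to a bijection $\mathcal{D}_{n,P}^{\subseteq}(S)\to\mathfrak{S}_{\alpha,R}$ with $R=\operatorname{istd}_\alpha(P)$. Your remaining steps then go through as in the paper: standardness of \textsf{inv}, \textsf{maj}, \textsf{mak} and $\textsf{inv}_r$, and inclusion--exclusion for (\ref{eq-Thm-Mahonian-permutation-fix-Des-PLrmax-2}).
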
 
In particular, the equidistribution of \textsf{inv} and \textsf{imaj} on $\mathcal{D}_{n,P}^{\smalleq}(S)$ is a refinement of (\ref{Foata-Schutzenberger}).
The following corollary is an immediate consequence of Theorem \ref{Thm-Mahonian-permutation-fix-Des-PLrmax}.
\begin{corollary}\label{Cor-Mahonian--Stirling-D_{n,S}}
The pairs 
$(\emph{\textsf{inv}},\emph{\textsf{lrmax}})$,
$(\emph{\textsf{imaj}},\emph{\textsf{lrmax}})$,
$(\emph{\textsf{imak}},\emph{\textsf{lrmax}})$, and
$(\emph{\textsf{iinv}}_{r},\emph{\textsf{lrmax}})$  
are equidistributed on $\mathcal{D}_{n}^{\smalleq}(S)$ and on $\mathcal{D}_{n}^{\smallsubseteq}(S)$.
\end{corollary}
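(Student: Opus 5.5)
The corollary follows from Theorem \ref{Thm-Mahonian-permutation-fix-Des-PLrmax} by summing over the possible position sets $P$. Fix $S\subseteq[n-1]$ and let $\textsf{st}$ be any of $\textsf{inv}$, $\textsf{imaj}$, $\textsf{imak}$, or $\textsf{iinv}_{r}$. The sets $\mathcal{D}_{n,P}^{\smalleq}(S)$, as $P$ runs over the subsets of $[n]$, partition $\mathcal{D}_{n}^{\smalleq}(S)$:
\[
\mathcal{D}_{n}^{\smalleq}(S)=\Uplusmed_{P\subseteq[n]}\mathcal{D}_{n,P}^{\smalleq}(S).
\]
The same decomposition holds for $\mathcal{D}_{n}^{\smallsubseteq}(S)$. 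This is because every permutation has exactly one set $\textsf{PLrmax}(\pi)$.

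On each block $\mathcal{D}_{n,P}^{\smalleq}(S)$ the statistic $\textsf{lrmax}$ is constant and equal to $|P|$, since $\textsf{lrmax}(\pi)=|\textsf{PLrmax}(\pi)|$. Hence
\[
\sum_{\pi\in\mathcal{D}_{n}^{\tinyeq}(S)}q^{\textsf{st}(\pi)}x^{\textsf{lrmax}(\pi)}
=\sum_{P\subseteq[n]}x^{|P|}\sum_{\pi\in\mathcal{D}_{n,P}^{\tinyeq}(S)}q^{\textsf{st}(\pi)}.
\]
By (\ref{eq-Thm-Mahonian-permutation-fix-Des-PLrmax-2}), the inner sum on the right does not depend on which of the four statistics $\textsf{st}$ is chosen. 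Therefore the left-hand side is the same for all four statistics, which is the claimed equidistribution of the four pairs on $\mathcal{D}_{n}^{\smalleq}(S)$.

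The argument for $\mathcal{D}_{n}^{\smallsubseteq}(S)$ is identical, using (\ref{eq-Thm-Mahonian-permutation-fix-Des-PLrmax-1}) in place of (\ref{eq-Thm-Mahonian-permutation-fix-Des-PLrmax-2}). Alternatively, it follows from the $\mathcal{D}_{n}^{\smalleq}$ case by summing over all $T\subseteq S$, since $\mathcal{D}_{n}^{\smallsubseteq}(S)$ is the disjoint union of the sets $\mathcal{D}_{n}^{\smalleq}(T)$.

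There is no real obstacle here. All the substance lies in Theorem \ref{Thm-Mahonian-permutation-fix-Des-PLrmax}, which the paper states before this corollary. The one point to check is that $\textsf{lrmax}$ is determined by $P$ alone, and this holds by definition.
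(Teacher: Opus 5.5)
Your proposal is correct and follows the same route as the paper. The paper presents this corollary as an immediate consequence of Theorem~\ref{Thm-Mahonian-permutation-fix-Des-PLrmax}, which amounts to your argument: decompose $\mathcal{D}_{n}^{\smalleq}(S)$ and $\mathcal{D}_{n}^{\smallsubseteq}(S)$ by the value of $\textsf{PLrmax}$, then use $\textsf{lrmax}(\pi)=|\textsf{PLrmax}(\pi)|$.
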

As a consequence of (\ref{eq-Thm-Mahonian-permutation-fix-Des-PLrmax-2}), 
we obtain the following result.
\begin{corollary}\label{Cor-Stirling-Euler-Mahonian}
	The triples 
	$(\emph{\textsf{lrmax}},\emph{\textsf{des}},\emph{\textsf{inv}})$,
	$(\emph{\textsf{lrmax}},\emph{\textsf{des}},\emph{\textsf{imaj}})$,
	$(\emph{\textsf{lrmax}},\emph{\textsf{des}},\emph{\textsf{imak}})$,
	and
	$(\emph{\textsf{lrmax}},\emph{\textsf{des}},\emph{\textsf{iinv}}_{r})$ 
    are equidistributed on $\mathfrak{S}_{n}$.
\end{corollary}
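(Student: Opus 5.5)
The plan is to derive Corollary \ref{Cor-Stirling-Euler-Mahonian} directly from (\ref{eq-Thm-Mahonian-permutation-fix-Des-PLrmax-2}) by summing over the appropriate refinement classes. The key observation is that $\mathfrak{S}_{n}$ splits into the disjoint pieces $\mathcal{D}_{n,P}^{\smalleq}(S)$:
\begin{align*}
\mathfrak{S}_{n}=\Uplusmed_{S\subseteq[n-1]}\ \Uplusmed_{P\subseteq[n]}\mathcal{D}_{n,P}^{\smalleq}(S),
\end{align*}
and on each piece the statistics $\textsf{lrmax}$ and $\textsf{des}$ are constant. Indeed, $\textsf{des}(\pi)=|\textsf{Des}(\pi)|=|S|$ and $\textsf{lrmax}(\pi)=|\textsf{PLrmax}(\pi)|=|P|$ for every $\pi\in\mathcal{D}_{n,P}^{\smalleq}(S)$.

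Let $\textsf{st}\in\{\textsf{inv},\textsf{imaj},\textsf{imak},\textsf{iinv}_{r}\}$. Then
\begin{align*}
\sum_{\pi\in\mathfrak{S}_{n}}x^{\textsf{lrmax}(\pi)}t^{\textsf{des}(\pi)}q^{\textsf{st}(\pi)}
=\sum_{S\subseteq[n-1]}\sum_{P\subseteq[n]}x^{|P|}t^{|S|}\sum_{\pi\in\mathcal{D}_{n,P}^{\tinyeq}(S)}q^{\textsf{st}(\pi)}.
\end{align*}
By (\ref{eq-Thm-Mahonian-permutation-fix-Des-PLrmax-2}), the innermost sum does not depend on the choice of $\textsf{st}$ among the four statistics. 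Hence the whole trivariate generating function is the same for all four choices, which is exactly the claimed equidistribution of the four triples on $\mathfrak{S}_{n}$.

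No step is a real obstacle once Theorem \ref{Thm-Mahonian-permutation-fix-Des-PLrmax} is assumed. The only point to check is that the refinement uses the equality version $\textsf{Des}(\pi)=S$ and not the containment version, since only the equality version pins down $\textsf{des}$.
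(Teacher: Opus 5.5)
Your proof is correct and follows essentially the same route as the paper, which derives the corollary directly from the equality version (\ref{eq-Thm-Mahonian-permutation-fix-Des-PLrmax-2}) of Theorem \ref{Thm-Mahonian-permutation-fix-Des-PLrmax}. It does so by decomposing $\mathfrak{S}_n$ into the classes $\mathcal{D}_{n,P}^{=}(S)$, on which $\textsf{des}=|S|$ and $\textsf{lrmax}=|P|$ are constant. You are also right that the equality version, not the containment version, is what fixes $\textsf{des}$.
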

\begin{remark}\label{remark-Stirling-Euler-Mahonian}
Corollary \ref{Cor-Stirling-Euler-Mahonian} establishes 
four Stirling--Euler--Mahonian-type triples that are equidistributed on permutations.
These are different from the Stirling--Euler--Mahonian triples
$({\textsf{rlmin}},{\textsf{des}},{\textsf{maj}})$,
$({\textsf{rlmin}},{\textsf{exc}},{\textsf{den}})$, and 
$({\textsf{rlmin}},{\textsf{des}},{\textsf{mak}})$ mentioned at the end of Section \ref{Section-restricted-words}.
Indeed, $({\textsf{lrmax}},{\textsf{des}},{\textsf{imaj}})$ and $({\textsf{rlmin}},{\textsf{des}},{\textsf{maj}})$ are not equidistributed on $\mathfrak{S}_{n}$, as can be seen from small examples.
\end{remark}

The proofs of Theorems \ref{Thm-Mahonian-permutation-fix-Des} and \ref{Thm-Mahonian-permutation-fix-Des-PLrmax} will be given in the next subsection.
In Sections \ref{Section-Alternating permutations} and \ref{Section-Permutations-with-k-runs}, we present applications of Theorems \ref{Thm-Mahonian-permutation-fix-Des} and \ref{Thm-Mahonian-permutation-fix-Des-PLrmax} to alternating permutations and permutations with $k$ alternating runs.
\subsection{Proofs of Theorems \ref{Thm-Mahonian-permutation-fix-Des} and \ref{Thm-Mahonian-permutation-fix-Des-PLrmax} }
In this subsection, let $S=\{s_{1},s_{2},\ldots,s_{k}\}_{<}\subseteq[n-1]$.
Let $\alpha_{i}=s_{i}-s_{i-1}$ for $1\leqslant i\leqslant k+1$,
with $s_{0}=0$ and $s_{k+1}=n$.
Then $\alpha=(\alpha_{1},\alpha_{2},\ldots,\alpha_{k+1})\models n$.
Clearly, $s_{i}=\sum_{j=1}^{i}\alpha_{j}$ for all $i\in[k+1]$.
For such $S$ and $\alpha$, 
we write $S=\Sigma(\alpha)$. 
For the remainder of this subsection, we always assume that
$S=\Sigma(\alpha)$. 
Then
\begin{align*} 
|\mathcal{D}_{n}^{\smallsubseteq}(S)|
=\binom{n}{s_{1},s_{2}-s_{1},\dots,n-s_{k}}
=\binom{n}{\alpha_{1},\alpha_{2},\dots,\alpha_{k+1}}
=|\mathfrak{S}_{\alpha}|.
\end{align*} 
We now define a bijection $$\theta:\mathcal{D}_{n}^{\smallsubseteq}(S)\rightarrow\mathfrak{S}_{\alpha}.$$
The construction of $\theta$ is analogous to the classical method of representing set partitions by restricted growth functions.
Let $\pi=\pi_{1}\pi_{2}\ldots\pi_{n}\in\mathcal{D}_{n}^{\smallsubseteq}(S)$.
Let 
$$\beta_{1}=\pi_{1}\pi_{2}\ldots\pi_{s_{1}},~~
\beta_{2}=\pi_{s_{1}+1}\pi_{s_{1}+2}\ldots\pi_{s_{2}},~~
\ldots,~~
\beta_{k+1}=\pi_{s_{k}+1}\pi_{s_{k}+2}\ldots\pi_{n}.$$
Thus each subsequence $\beta_{i}$ is increasing, 
since $\pi\in\mathcal{D}_{n}^{\smallsubseteq}(S)$.
Define $\theta(\pi)=w_{1}w_{2}\ldots w_{n}\in\mathfrak{S}_{\alpha}$
by setting
$$w_{i}=j,\text{~~if~the~letter~}i\text{~appears~in~}\beta_{j}.$$ 
It is straightforward to verify that $\theta:\mathcal{D}_{n}^{\smallsubseteq}(S)\rightarrow\mathfrak{S}_{\alpha}$
is a bijection.

For example, let $n=9$ and $S=\{3,5\}$.
Then $\alpha=(3,2,4)$. 
Let $\pi=245136789\in\mathcal{D}_{n}^{\smallsubseteq}(S)$.
Then
$\beta_{1}=245, \beta_{2}=13, \beta_{3}=6789$.
By definition,
$$w_{2}=w_{4}=w_{5}=1,~~w_{1}=w_{3}=2,~~w_{6}=w_{7}=w_{8}=w_{9}=3.$$
Therefore, $\theta(\pi)=212113333\in\mathfrak{S}_{(3,2,4)}$.

Before introducing a property of $\theta$, 
let us recall the \emph{standardization} map $\text{std}:\mathfrak{S}_{\alpha}\rightarrow\mathfrak{S}_{n}$.
Given a word $w\in\mathfrak{S}_{\alpha}$,
we define   
$\text{std}(w)$ to be the permutation in $\mathfrak{S}_{n}$ obtained by replacing, 
in the order of their appearance from left to right,
the $\alpha_{1}$ occurrences of the letter $1$ by $1,2,\ldots, \alpha_{1}$,
the $\alpha_{2}$ occurrences of the letter $2$ by $\alpha_{1}+1,\alpha_{1}+2,\ldots, \alpha_{1}+\alpha_{2}$, 
and similarly for the remaining letters.
For example, we have $\text{std}(313231344)=415362789$.
The following lemma is immediate.
\begin{lemma}\label{lemma-std}
	Let 
	$\operatorname{std}(w_{1}w_{2}\ldots w_{n})=\tau_{1}\tau_{2}\ldots\tau_{n}$.
	Then $\tau_{i}<\tau_{j}$ if and only if either $w_{i}< w_{j}$ or $w_{i}=w_{j}$ with $i<j$.
	In particular, for $i<j$, we have $\tau_{i}<\tau_{j}$ if and only if $w_{i}\leqslant w_{j}$.
\end{lemma}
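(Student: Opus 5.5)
The statement is Lemma~\ref{lemma-std}. I would prove it directly from the definition of $\operatorname{std}$, by comparing the label $\tau_i$ given to position $i$ with the label $\tau_j$ given to position $j$.

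\textbf{Step 1: the labels are ordered by value class.} By construction, every occurrence of a letter $a$ in $w$ receives a label in the interval
\[
I_a=\Bigl\{\textstyle\sum_{b<a}\alpha_b+1,\ \ldots,\ \sum_{b\leqslant a}\alpha_b\Bigr\}.
\]
These intervals are disjoint, consecutive, and increase with $a$. So if $w_i<w_j$, then $\tau_i\in I_{w_i}$ lies strictly below every element of $I_{w_j}$, and hence $\tau_i<\tau_j$.

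\textbf{Step 2: labels within one value class are ordered by position.} If $w_i=w_j$, both positions carry the same letter. Within that letter, the labels are assigned increasingly in order of appearance from left to right. Therefore $\tau_i<\tau_j$ exactly when $i<j$.

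\textbf{Step 3: the converse.} Together, Steps 1 and 2 show that if either $w_i<w_j$, or $w_i=w_j$ with $i<j$, then $\tau_i<\tau_j$. For the converse, suppose neither condition holds with $i\neq j$. Then either $w_i>w_j$, or $w_i=w_j$ with $i>j$. Applying Steps 1 and 2 with the roles of $i$ and $j$ swapped gives $\tau_j<\tau_i$. Since $\tau$ is a permutation and $i\neq j$, this means $\tau_i<\tau_j$ fails. (The case $i=j$ is trivial: both sides of the equivalence are false.)

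\textbf{Step 4: the special case $i<j$.} When $i<j$, the condition ``$w_i<w_j$, or $w_i=w_j$ with $i<j$'' simplifies to $w_i\leqslant w_j$. The second assertion of the lemma follows immediately.

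I do not expect any real obstacle here. The only point that needs care is stating the converse direction explicitly, which relies on the trichotomy of comparisons between distinct labels.
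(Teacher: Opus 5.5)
Your proof is correct and is exactly the direct verification from the definition of $\operatorname{std}$ that the paper has in mind; the paper calls the lemma immediate and gives no argument. One small point: in Step 3 you do not need $\tau$ to be a permutation, since $\tau_j<\tau_i$ already rules out $\tau_i<\tau_j$.
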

The following property of $\theta$ will play a crucial role. 
\begin{proposition}\label{proposition-theta-pi-w}
	For $\pi\in\mathcal{D}_{n}^{\smallsubseteq}(S)$, 
	we have $\operatorname{std}(\theta(\pi))=\pi^{-1}$.
\end{proposition}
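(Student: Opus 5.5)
We have to show that $\tau=\operatorname{std}(\theta(\pi))$ satisfies $\tau_{i}=\pi^{-1}_{i}$ for every $i\in[n]$, i.e.\ $\tau_{i}$ is the position of the letter $i$ in $\pi$. Equivalently, we show that $\pi_{\tau_{i}}=i$ for all $i$, which says $\tau\circ\ldots$ is inverse to $\pi$. The plan is to compute $\tau_{i}$ directly from the definition of standardization and compare it with the position of $i$ in $\pi$.

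Fix $i\in[n]$ and let $j=w_{i}$, so the letter $i$ lies in block $\beta_{j}$, which occupies positions $s_{j-1}+1,\ldots,s_{j}$ of $\pi$. Since $\beta_{j}$ is increasing (this is exactly where $\textsf{Des}(\pi)\subseteq S$ is used), the position of $i$ in $\pi$ equals $s_{j-1}+t$, where $t=|\{\ell\in\beta_{j}:\ell\leqslant i\}|$ is the rank of $i$ within the set of letters of $\beta_{j}$. On the other side, by the definition of $\operatorname{std}$, the occurrences of the letter $j$ in $w$ are relabelled $\alpha_{1}+\cdots+\alpha_{j-1}+1,\ldots,\alpha_{1}+\cdots+\alpha_{j}$ from left to right, i.e.\ $s_{j-1}+1,\ldots,s_{j}$. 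The occurrence of $j$ at position $i$ of $w$ is the $t'$-th occurrence from the left, where $t'=|\{\ell\leqslant i: w_{\ell}=j\}|=|\{\ell\leqslant i:\ell\in\beta_{j}\}|$. Hence $\tau_{i}=s_{j-1}+t'$, and since $t'=t$, we get $\tau_{i}=s_{j-1}+t=\pi^{-1}_{i}$.

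Equivalently, one could argue via Lemma~\ref{lemma-std}: for $i\neq i'$, $\tau_{i}<\tau_{i'}$ iff $w_{i}<w_{i'}$, or $w_i=w_{i'}$ and $i<i'$; and $\pi^{-1}_{i}<\pi^{-1}_{i'}$ iff $i$ lies in an earlier block than $i'$, or in the same block with $i<i'$ (by increasingness of blocks). These orders coincide, so $\tau$ and $\pi^{-1}$ induce the same relative order on $[n]$ and are equal. There is no real obstacle; the only point needing care is the use of the increasing-block property to identify the rank of $i$ within $\beta_{j}$ with its position offset, which I would state explicitly.
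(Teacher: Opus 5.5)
Your proposal is correct and is essentially the paper's argument: the paper fixes a block $\beta_{\ell}=\pi_{a}\pi_{a+1}\ldots\pi_{b}$, notes that $w_{\pi_{a}}=\cdots=w_{\pi_{b}}=\ell$ with $\pi_{a}<\cdots<\pi_{b}$, and concludes $\tau_{\pi_{c}}=c$ from the definition of standardization. You run the same rank-within-block computation indexed by the letter $i$ rather than by its position in $\pi$, and your alternative via Lemma~\ref{lemma-std} is also valid.
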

\begin{proof}
Let
$\pi=\pi_{1}\pi_{2}\ldots\pi_{n}$ and $\theta(\pi)=w=w_{1}w_{2}\ldots w_{n}$.
Fix $\ell$ with $1\leqslant \ell\leqslant k+1$, and
set $a=s_{\ell-1}+1$ and $b=s_{\ell}$.
Then 
$\beta_{\ell}=\pi_{a}\pi_{a+1}\ldots\pi_{b}, \text{~with~} \pi_{a}<\pi_{a+1}<\cdots<\pi_{b}.$
By the definition of $\theta$, we have
$$w_{\pi_{a}}=w_{\pi_{a+1}}=\cdots=w_{\pi_{b}}=\ell.$$
Let $\text{std}(w)=\tau=\tau_{1}\tau_{2}\ldots\tau_{n}$.
It follows from the definition of $\text{std}$ that
$$\tau_{\pi_{a}}=a,~\tau_{\pi_{a+1}}=a+1,~\ldots,~\tau_{\pi_{b}}=b.$$
Since $\ell$ is arbitrary, it follows that $\tau_{\pi_{i}}=i$ for all $i\in[n]$.
Hence $\tau=\pi^{-1}$.
\end{proof}
\begin{definition}
A statistic ${\textsf{st}}$ on words is said to be \emph{standard} if
${\textsf{st}}({\text{std}}(w))={\textsf{st}}(w)$ for every word $w$.
\end{definition}
\begin{proposition}\label{Prop-iS-S}
Let $\emph{\textsf{st}}$ be a standard statistic. 
Then $\emph{\textsf{ist}}(\pi)=\emph{\textsf{st}}(\theta(\pi))$ for all $\pi\in\mathcal{D}_{n}^{\smallsubseteq}(S)$.
\end{proposition}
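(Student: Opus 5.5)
This is a direct chain of equalities. The plan is to combine Proposition \ref{proposition-theta-pi-w} with the definition of a standard statistic; no case analysis or new construction is needed.

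Fix $\pi\in\mathcal{D}_{n}^{\smallsubseteq}(S)$ and put $w=\theta(\pi)\in\mathfrak{S}_{\alpha}$.

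1. By Proposition \ref{proposition-theta-pi-w}, $\operatorname{std}(w)=\pi^{-1}$.
2. Since $\textsf{st}$ is standard, $\textsf{st}(w)=\textsf{st}(\operatorname{std}(w))$.
3. Combining the two,
\[
\textsf{st}(\theta(\pi))=\textsf{st}(\operatorname{std}(\theta(\pi)))=\textsf{st}(\pi^{-1})=\textsf{ist}(\pi),
\]
where the last equality is the definition of $\textsf{ist}$.

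The only point needing care is that $\textsf{st}$ is evaluated consistently on words and on permutations. In the standardness identity $\textsf{st}(\operatorname{std}(w))=\textsf{st}(w)$, the left side treats $\operatorname{std}(w)=\pi^{-1}$ as a permutation, i.e. a word in $\mathfrak{S}_{(1,\dots,1)}$. This is exactly the statistic used to define $\textsf{ist}(\pi)=\textsf{st}(\pi^{-1})$, because the paper regards permutations as a special case of words.

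There is no real obstacle in this proposition. The substantive work was already done in Proposition \ref{proposition-theta-pi-w}, and the real burden is deferred to later: verifying that the relevant statistics (\textsf{maj}, \textsf{mak}, $\textsf{inv}_r$, \textsf{stat}) are standard in this sense.
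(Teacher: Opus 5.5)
Your proof is correct and is the paper's argument: it combines Proposition~\ref{proposition-theta-pi-w} with the definition of a standard statistic to get $\textsf{ist}(\pi)=\textsf{st}(\pi^{-1})=\textsf{st}(\operatorname{std}(\theta(\pi)))=\textsf{st}(\theta(\pi))$. Your remark that $\textsf{st}$ is evaluated on permutations as words in $\mathfrak{S}_{(1,\ldots,1)}$ is a useful clarification, not a different route.
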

\begin{proof}
Let $\pi\in\mathcal{D}_{n}^{\smallsubseteq}(S)$.
By definition and Proposition \ref{proposition-theta-pi-w},
we have 
$$\textsf{ist}(\pi)=\textsf{st}(\pi^{-1})
=\textsf{st}(\text{std}(\theta(\pi)))=\textsf{st}(\theta(\pi)),
$$
as desired.
\end{proof}
\begin{lemma}\label{Lemma-inv-maj-majd-mak-stat}
The statistics 
\emph{\textsf{inv}}, \emph{\textsf{maj}}, \emph{\textsf{mak}},  \emph{\textsf{stat}}, and $\emph{\textsf{inv}}_{r}$ are standard.
\end{lemma}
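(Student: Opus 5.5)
The plan rests on Lemma~\ref{lemma-std}. For $i<j$, the pair $(\tau_i,\tau_j)$ of $\tau=\operatorname{std}(w)$ satisfies $\tau_i>\tau_j$ if and only if $w_i>w_j$, and $\tau_i<\tau_j$ if and only if $w_i\leqslant w_j$. So standardization preserves every strict comparison between an earlier and a later letter, and it turns a tie $w_i=w_j$ into an ascent $\tau_i<\tau_j$. A statistic will therefore be standard as soon as we show two things: it depends on the word only through the relative order of its letters, and it counts equal letters in the same way as an ascending pair. I will check this for each of the five statistics in turn.

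For \textsf{inv} the check is immediate. An inversion is a pair $i<j$ with $w_i>w_j$, and by Lemma~\ref{lemma-std} this holds if and only if $\tau_i>\tau_j$. Hence $\textsf{Inv}(w)=\textsf{Inv}(\tau)$ as sets of pairs. Similarly, $\textsf{Des}(w)=\textsf{Des}(\tau)$, because $w_i>w_{i+1}$ if and only if $\tau_i>\tau_{i+1}$. This gives $\textsf{maj}(w)=\textsf{maj}(\tau)$, and also $\textsf{des}(w)=\textsf{des}(\tau)$.

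For $\textsf{inv}_r$, \textsf{mak} and \textsf{stat} I will first recall their definitions on words (Kadell; Clarke--Steingr\'{\i}msson--Zeng; Kitaev--Vajnovszki). Each is a sum of counts of occurrences of vincular patterns, or of conditions of the form ``$w_i>w_j$'' or ``$w_i\leqslant w_j$'' with $i<j$. For instance, $\textsf{inv}_r$ counts the pairs $i<j$ with $w_i>w_j$ and $j-i\geqslant r$, together with the pairs with $j-i<r$ and $w_i>w_j$ but weighted through descents. In Kadell's formulation, $\textsf{inv}_r$ is the number of $r$-inversions ($i<j\leqslant i+r-1$, $w_i>w_j$) plus $\sum_{i\in\textsf{Des}_r}\bigl(\ldots\bigr)$, and all of these are comparisons of earlier with later letters. \textsf{mak} is expressed via descent bottoms, descent tops and counts such as $\#\{j>i: w_j<w_i\}$. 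In the word setting, \textsf{stat} is a sum of occurrences of patterns like $(\underline{13}2)$, $(\underline{21})$, and so on. The key point is that in each pattern the convention for equal letters is the one induced by standardization: equal letters are never counted as a descent or an inversion. I will verify this pattern by pattern, applying Lemma~\ref{lemma-std} to each comparison that appears. The only care needed is in positions where a comparison is written as ``$w_j<w_i$'' for $j<i$, that is, a later letter compared with an earlier one. Lemma~\ref{lemma-std} also handles these, since $w_j<w_i$ with $j<i$ is equivalent to $w_i>w_j$ read from left to right.

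The main obstacle is definitional rather than conceptual. For \textsf{stat} and \textsf{mak} on words, the extensions in the literature were chosen so that standardization preserves them, but this has to be confirmed against the exact definitions (strict versus weak inequalities). If one of these definitions is instead phrased via a different tie convention, I would fall back on citing the original source, where the word version is defined as the statistic of the standardization. For \textsf{stat} this is the approach of Kitaev--Vajnovszki.
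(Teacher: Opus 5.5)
Your proposal is correct and follows essentially the same route as the paper. The paper handles \textsf{inv}, \textsf{maj}, and $\textsf{inv}_{r}$ by Lemma~\ref{lemma-std}, because every term in their definitions is a strict comparison $w_i>w_j$ with $i<j$. For \textsf{mak} and \textsf{stat} the paper verifies nothing itself; it cites Clarke--Steingr\'{\i}msson--Zeng (Section~4) and Fu et al.\ (equation~(2.1)), which is exactly your fallback.

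Two slips do not affect the argument. First, your opening paraphrase of $\textsf{inv}_{r}$ is reversed: pairs with $j-i<r$ are counted plainly, and the weighted term is $\sum_{w_i>w_{i+r}} i$. Second, $\textsf{mak}=\textsf{Dbot}+\textsf{Res}$ involves no descent tops. It uses only the values of descent bottoms, which are exactly the entries of $\operatorname{std}(w)$, and the embracing condition $\mathrm{C}(B)\geqslant a>\mathrm{O}(B)$ for blocks $B$ to the right of $a$. Lemma~\ref{lemma-std} preserves both directly.
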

\begin{proof}
For any $w\in\mathfrak{S}_{\alpha}$, we have
\begin{align*} 
	\textsf{inv}(\text{std}(w))&=\textsf{inv}(w)&\!\!\!\!\!\!\!\!&\text{(by Lemma \ref{lemma-std}),}\\
	\textsf{maj}(\text{std}(w))&=\textsf{maj}(w)&\!\!\!\!\!\!\!\!&\text{(by Lemma \ref{lemma-std}),}\\
	\textsf{mak}(\text{std}(w))&=\textsf{mak}(w)&\!\!\!\!\!\!\!\!&\text{(see \cite[Section 4]{Clarke-1997}),}\\
	\textsf{stat}(\text{std}(w))&=\textsf{stat}(w)&\!\!\!\!\!\!\!\!&\text{(see \cite[(2.1)]{Fu-2019}),}\\
	\textsf{inv}_{r}(\text{std}(w))&=\textsf{inv}_{r}(w)&\!\!\!\!\!\!\!\!&\text{(see (\ref{eq-std-maj_d})),}
\end{align*}
as desired.
\end{proof}
\begin{remark}\label{remark-reason}
	We note that the statistics  
	\textsf{den}, \textsf{mad}, \textsf{sor}, $r$\textsf{maj}, and $r$\textsf{den} are not standard.
\end{remark}
\begin{proof}[Proof of Theorem \ref{Thm-Mahonian-permutation-fix-Des}]
Note that \textsf{inv}, \textsf{maj}, \textsf{mak}, 
\textsf{stat}, and  $\textsf{inv}_{r}$ are Mahonian statistics on $\mathfrak{S}_{\alpha}$.
Hence, for each of them,
the generating function on $\mathfrak{S}_{\alpha}$ is given by the $q$-multinomial coefficient $\genfrac{[}{]}{0pt}{}{n}{\alpha_{1},\alpha_{2},\dots,\alpha_{k+1}}_{q}$.
Let $\textsf{st}$ be any of \textsf{inv}, \textsf{maj}, \textsf{mak}, \textsf{stat}, and $\textsf{inv}_{r}$.	
For $\textsf{st}=\textsf{inv}$, 
note that $\textsf{ist}=\textsf{iinv}=\textsf{inv}$, since
$\textsf{inv}(\pi)=\textsf{inv}(\pi^{-1})$ for any permutation $\pi$.
By Proposition \ref{Prop-iS-S} and Lemma \ref{Lemma-inv-maj-majd-mak-stat},
and since $\theta:\mathcal{D}_{n}^{\smallsubseteq}(S)\rightarrow\mathfrak{S}_{\alpha}$ is a bijection, 
we have
\begin{align*}
 \sum_{\pi\in\mathcal{D}_{n}^{\tinysubseteq}(S)}q^{\textsf{ist}(\pi)}
=\sum_{\pi\in\mathcal{D}_{n}^{\tinysubseteq}(S)}q^{\textsf{st}(\theta(\pi))}
=\sum_{w\in\mathfrak{S}_{\alpha}}q^{\textsf{st}(w)}
=\genfrac{[}{]}{0pt}{}{n}{\alpha_{1},\alpha_{2},\dots,\alpha_{k+1}}_{q}
=\genfrac{[}{]}{0pt}{}{n}{s_{1},s_{2}-s_{1},\dots,n-s_{k}}_{q}.
\end{align*} 	
This proves (\ref{Thm-Mahonian-permutation-fix-Des-eq-1}).
Using the Principle of Inclusion-Exclusion 
(e.g., \cite[Example 2.2.5]{Stanley-2012-book}),
we obtain (\ref{Thm-Mahonian-permutation-fix-Des-eq-2}).
\end{proof}

In the remainder of this subsection, we prove Theorem \ref{Thm-Mahonian-permutation-fix-Des-PLrmax}.
Recall that ${\alpha=(\alpha_{1},\ldots,\alpha_{k+1})\models n}$. 
Define the map $$\text{istd}_{\alpha}:[n]\rightarrow[k+1]$$ as follows.
Set 
$\text{istd}_{\alpha}(i)=1$ for $1\leqslant i\leqslant \alpha_{1}$,
$\text{istd}_{\alpha}(i)=2$
for $\alpha_{1}+1\leqslant i\leqslant \alpha_{1}+\alpha_{2}$, and so on.
For a permutation $\pi\in\mathfrak{S}_{n}$, 
define $$\text{istd}_{\alpha}(\pi)=\text{istd}_{\alpha}(\pi_{1})~\!\text{istd}_{\alpha}(\pi_{2})~\!\ldots~\!\text{istd}_{\alpha}(\pi_{n}).$$
Clearly,
$\text{istd}_{\alpha}(\pi)$ is the word in $\mathfrak{S}_{\alpha}$ obtained from $\pi$ by replacing each of $1,2,\ldots,\alpha_{1}$ with $1$,
each of $\alpha_{1}+1,\alpha_{1}+2,\ldots,\alpha_{1}+\alpha_{2}$ with $2$, and so on.
It is easy to see that for any $w\in\mathfrak{S}_{\alpha}$, 
$$w=\text{istd}_{\alpha}(\text{std}(w)).$$
Moreover, for $A\subseteq[n]$, 
we define the \emph{multiset} $\text{istd}_{\alpha}(A)$ by  
$$\text{istd}_{\alpha}(A)=\{\text{istd}_{\alpha}(a):a\in A\}.$$

Let $S=\Sigma(\alpha)$, as defined at the beginning of this subsection.
A subset $P\subseteq [n]$ is said to be \emph{$S$-suffix-closed} if, whenever
$i\in P$ and $s_{j-1}<i\leqslant s_j$ for some $j\in [k+1]$, we have
$[i,s_j]:=\{i,i+1,\ldots,s_j\}\subseteq P.$
Let $2^{[n]}_{S}$ denote the collection of all $S$-suffix-closed subsets of $[n]$.
Let $2^{[k+1]}_{\alpha}$ denote the collection of all submultisets of the multiset
$\{1^{\alpha_{1}},2^{\alpha_{2}},\ldots,(k+1)^{\alpha_{k+1}}\}$.
Clearly,  $$|2^{[n]}_{S}|=|2^{[k+1]}_{\alpha}|=\prod_{j=1}^{k+1}(\alpha_j+1).$$
It is straightforward to verify the following result.
\begin{lemma}\label{istd_inverse}
The map $$\operatorname{istd}_{\alpha}:2^{[n]}_{S}\rightarrow 2^{[k+1]}_{\alpha}$$ is a bijiection.
\end{lemma}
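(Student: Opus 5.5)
The plan is to show that $\operatorname{istd}_{\alpha}$ maps $2^{[n]}_{S}$ into $2^{[k+1]}_{\alpha}$ and is injective. Since both collections have cardinality $\prod_{j=1}^{k+1}(\alpha_j+1)$, this gives bijectivity.

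First, the map is well defined. The blocks $B_j=[s_{j-1}+1,s_j]$, $j\in[k+1]$, partition $[n]$. We have $|B_j|=\alpha_j$, and $\operatorname{istd}_{\alpha}$ is constant equal to $j$ on $B_j$. So for any $A\subseteq[n]$, the multiplicity of $j$ in the multiset $\operatorname{istd}_{\alpha}(A)$ is $|A\cap B_j|\leqslant\alpha_j$. Hence $\operatorname{istd}_{\alpha}(A)$ is a submultiset of $\{1^{\alpha_1},\ldots,(k+1)^{\alpha_{k+1}}\}$.

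Next, the structure of an $S$-suffix-closed set. Let $P\in 2^{[n]}_{S}$ and let $i$ be the smallest element of $P\cap B_j$, if this set is nonempty. Suffix-closedness gives $[i,s_j]\subseteq P$, so $P\cap B_j=[i,s_j]$ exactly. Thus $P\cap B_j$ is always a terminal segment of $B_j$, possibly empty. Such a segment is determined by its size $m_j=|P\cap B_j|\in\{0,1,\ldots,\alpha_j\}$: it equals $[s_j-m_j+1,s_j]$. Conversely, any choice of sizes $m_j\in\{0,\ldots,\alpha_j\}$ gives the $S$-suffix-closed set $\bigcup_j[s_j-m_j+1,s_j]$. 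This also re-derives the cardinality count.

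Finally, injectivity and surjectivity. The multiplicity of $j$ in $\operatorname{istd}_{\alpha}(P)$ is $m_j$, so $\operatorname{istd}_{\alpha}(P)$ determines every $m_j$, and hence determines $P$. Given a submultiset $\{1^{m_1},\ldots,(k+1)^{m_{k+1}}\}$ with $0\leqslant m_j\leqslant\alpha_j$, the set $\bigcup_j[s_j-m_j+1,s_j]$ is a preimage. So the map is bijective, with inverse given explicitly. There is no real obstacle. The only point needing care is the observation that suffix-closedness forces each $P\cap B_j$ to be a terminal segment of $B_j$.
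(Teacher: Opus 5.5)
Your argument is correct and is the routine verification the paper omits (it only says the lemma is ``straightforward to verify''). The key point is that suffix-closedness forces each $P\cap[s_{j-1}+1,s_j]$ to be a terminal segment of its block, determined by its size, and that size is exactly the multiplicity of $j$ in $\operatorname{istd}_{\alpha}(P)$; the cardinality count in your first paragraph is not needed, since you prove injectivity and surjectivity directly.
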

Let $\pi\in\mathcal{D}_{n}^{\smallsubseteq}(S)$,
and let $\textsf{PLrmax}(\pi)=P$.
Since each subsequence $\beta_{j}=\pi_{s_{j-1}+1}\pi_{s_{j-1}+2}\ldots\pi_{s_{j}}$ is increasing,
it follows that $P$ is $S$-suffix-closed.
\begin{lemma}\label{lemma-P-istd(P)}
	Let $P\in2^{[n]}_{S}$, and let $R=\operatorname{istd}_{\alpha}(P)$.
	Then $\theta:\mathcal{D}_{n,P}^{\smallsubseteq}(S)\rightarrow\mathfrak{S}_{\alpha,R}$ is a bijection.
\end{lemma}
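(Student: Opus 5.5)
Since $\theta:\mathcal{D}_{n}^{\smallsubseteq}(S)\rightarrow\mathfrak{S}_{\alpha}$ is already a bijection, and since $\mathcal{D}_{n}^{\smallsubseteq}(S)=\biguplus_{P\in 2^{[n]}_{S}}\mathcal{D}_{n,P}^{\smallsubseteq}(S)$ and $\mathfrak{S}_{\alpha}=\biguplus_{R}\mathfrak{S}_{\alpha,R}$, it suffices to show one thing: for every $\pi\in\mathcal{D}_{n}^{\smallsubseteq}(S)$,
\[
\textsf{Rlwmin}(\theta(\pi))=\operatorname{istd}_{\alpha}(\textsf{PLrmax}(\pi)).
\]
Granting this, $\theta$ maps $\mathcal{D}_{n,P}^{\smallsubseteq}(S)$ into $\mathfrak{S}_{\alpha,R}$ with $R=\operatorname{istd}_{\alpha}(P)$. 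Injectivity is inherited from $\theta$. For surjectivity, take $w\in\mathfrak{S}_{\alpha,R}$ and let $\pi=\theta^{-1}(w)$. Then $P'=\textsf{PLrmax}(\pi)$ is $S$-suffix-closed (as remarked before the lemma) and $\operatorname{istd}_{\alpha}(P')=R=\operatorname{istd}_{\alpha}(P)$. By Lemma~\ref{istd_inverse}, $P'=P$.

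To prove the displayed identity, write $w=\theta(\pi)$ and $\tau=\operatorname{std}(w)=\pi^{-1}$ (Proposition~\ref{proposition-theta-pi-w}). First, by Lemma~\ref{lemma-std}, $w_i$ is a weak right-to-left minimum of $w$ exactly when $\tau_i<\tau_j$ for all $j>i$, i.e.\ when $\tau_i$ is a right-to-left minimum of $\tau$. So the weak right-to-left minima of $w$ sit exactly at the positions $i$ where $\tau_i$ is a right-to-left minimum of $\tau$. At such a position the letter is $w_i=\operatorname{istd}_{\alpha}(\tau_i)$, because $w=\operatorname{istd}_{\alpha}(\operatorname{std}(w))$. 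Hence
\[
\textsf{Rlwmin}(w)=\operatorname{istd}_{\alpha}(\textsf{Rlmin}(\tau)),
\]
where $\textsf{Rlmin}(\tau)$ is the set of values of the right-to-left minima of $\tau$.

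It remains to check $\textsf{Rlmin}(\pi^{-1})=\textsf{PLrmax}(\pi)$. The value $v=\pi^{-1}_i$ is a right-to-left minimum of $\pi^{-1}$ iff every $j>i$ has $\pi^{-1}_j>v$. Equivalently, every letter $j$ of $\pi$ at a position less than $v$ satisfies $j<i$, i.e.\ $\pi_v=i$ exceeds all earlier entries. That is exactly the condition $v\in\textsf{PLrmax}(\pi)$. Combining the last two steps gives the displayed identity.

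The only delicate point is keeping positions and values straight in this last inversion step, together with the tie-breaking in Lemma~\ref{lemma-std}, which is precisely what turns weak minima of $w$ into strict minima of $\tau$. Once these are handled, the argument is routine bookkeeping.
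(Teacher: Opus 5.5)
Your proposal is correct and follows essentially the same route as the paper. You establish $\mathsf{Rlwmin}(\theta(\pi))=\operatorname{istd}_{\alpha}(\mathsf{PLrmax}(\pi))$ via $\operatorname{std}(\theta(\pi))=\pi^{-1}$, Lemma~\ref{lemma-std}, and $\mathsf{Rlmin}(\pi^{-1})=\mathsf{PLrmax}(\pi)$, then get surjectivity from the injectivity of $\operatorname{istd}_{\alpha}$ on $2^{[n]}_{S}$ (Lemma~\ref{istd_inverse}), with correct explicit checks of the two steps the paper calls straightforward.
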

\begin{proof}
	Since
	$
	\theta:D_{n}^{\smallsubseteq}(S)\rightarrow\mathfrak{S}_{\alpha}
	$
	is a bijiection, it suffices to show that
	\[
   \theta\bigl(D_{n,P}^{\smallsubseteq}(S)\bigr)=\mathfrak{S}_{\alpha,R}.
   \]	
	
	Let $\pi\in D_n^{\smallsubseteq}(S)$, and let $w=\theta(\pi)$. By Proposition \ref{proposition-theta-pi-w}, we have
	\[
	\mathrm{std}(w)=\pi^{-1}.
	\]
	Moreover, by Lemma \ref{lemma-std},
	\[
	\mathsf{Rlwmin}(w)=\mathrm{istd}_{\alpha}\bigl(\mathsf{Rlmin}(\mathrm{std}(w))\bigr).
	\]
	Hence
	\[
	\mathsf{Rlwmin}(w)
	=\mathrm{istd}_{\alpha}\bigl(\mathsf{Rlmin}(\pi^{-1})\bigr).
	\]
	It is straightforward to verify that
	\[
	\mathsf{PLrmax}(\pi)=\mathsf{Rlmin}(\pi^{-1}).
	\]
	Therefore,
	\begin{align}\label{eq-Rlwmin-istd}
	\mathsf{Rlwmin}(w)=\mathrm{istd}_{\alpha}\bigl(\mathsf{PLrmax}(\pi)\bigr).
	\end{align}
	
	Now suppose that $\pi\in D_{n,P}^{\smallsubseteq}(S)$. Then $\mathsf{PLrmax}(\pi)=P$. By (\ref{eq-Rlwmin-istd}), we have
	\[
	\mathsf{Rlwmin}(\theta(\pi))
	=\mathrm{istd}_{\alpha}(P)
	=R.
	\]
	Thus $\theta(\pi)\in \mathfrak{S}_{\alpha,R}$, and so
	\begin{align}\label{eq-subseteq}
	\theta\bigl(D_{n,P}^{\smallsubseteq}(S)\bigr)\subseteq \mathfrak{S}_{\alpha,R}.
	\end{align}
	
	Conversely, let $w\in \mathfrak{S}_{\alpha,R}$. Since
	$
	\theta: D_n^{\smallsubseteq}(S)\to \mathfrak{S}_{\alpha}
	$
	is a bijection, there exists a unique $\pi\in D_n^{\smallsubseteq}(S)$ such that $\theta(\pi)=w$. 
	By (\ref{eq-Rlwmin-istd}), we have
	\[
	R=\mathsf{Rlwmin}(w)=\mathrm{istd}_{\alpha}\bigl(\mathsf{PLrmax}(\pi)\bigr).
	\]
	Since $R=\mathrm{istd}_{\alpha}(P)$, it follows that
	\[
	\mathrm{istd}_{\alpha}\bigl(\mathsf{PLrmax}(\pi)\bigr)=\mathrm{istd}_{\alpha}(P).
	\]
	Now both $\mathsf{PLrmax}(\pi)$ and $P$ belong to $2_S^{[n]}$. 
    Hence, by Lemma \ref{istd_inverse}, the map
	\[
	\mathrm{istd}_{\alpha}:2_S^{[n]}\to 2_{\alpha}^{[k+1]}
	\]
	is injective, so
	$
	\mathsf{PLrmax}(\pi)=P.
	$
	Therefore $\pi\in D_{n,P}^{\smallsubseteq}(S)$, and thus
	\begin{align}\label{eq-supseteq}
	\mathfrak{S}_{\alpha,R}\subseteq \theta\bigl(D_{n,P}^{\smallsubseteq}(S)\bigr).
	\end{align}
	
	Combining (\ref{eq-subseteq}) and (\ref{eq-supseteq}) completes the proof.
\end{proof}

Now we are in a position to prove Theorem \ref{Thm-Mahonian-permutation-fix-Des-PLrmax}.
\begin{proof}[Proof of Theorem \ref{Thm-Mahonian-permutation-fix-Des-PLrmax}]
If $P\notin 2_S^{[n]}$, then both $D_{n,P}^{\smallsubseteq}(S)$ and $D_{n,P}^{\smalleq}(S)$ are empty, and there is nothing to prove. 
Thus we may assume that $P\in 2_S^{[n]}$.
Let $\textsf{st}$ be any of \textsf{inv}, \textsf{maj}, \textsf{mak}, and $\textsf{inv}_{r}$.
By Proposition \ref{Prop-iS-S} and Lemma \ref{lemma-P-istd(P)},
we have
\begin{align*}
 \sum_{\pi\in\mathcal{D}_{n,P}^{\tinysubseteq}(S)}q^{\textsf{ist}(\pi)}
=\sum_{\pi\in\mathcal{D}_{n,P}^{\tinysubseteq}(S)}q^{\textsf{st}(\theta(\pi))}
=\sum_{\theta(\pi)\in\mathfrak{S}_{\alpha,R}}q^{\textsf{st}(\theta(\pi))}
=\sum_{w\in\mathfrak{S}_{\alpha,R}}q^{\textsf{st}(w)}.
\end{align*}
By Theorem \ref{Thm-main-Mahonian-Rlwmin},
we see that  \textsf{inv}, \textsf{maj}, \textsf{mak}, and $\textsf{inv}_{r}$ are equidistributed on $\mathfrak{S}_{\alpha,R}$.
It follows that \textsf{inv} (=\textsf{iinv}), \textsf{imaj}, \textsf{imak},
and $\textsf{iinv}_{r}$
are equidistributed on $\mathcal{D}_{n,P}^{\smallsubseteq}(S)$.
This completes the proof of (\ref{eq-Thm-Mahonian-permutation-fix-Des-PLrmax-1}).
Then, by the Principle of Inclusion-Exclusion, we obtain (\ref{eq-Thm-Mahonian-permutation-fix-Des-PLrmax-2}).
\end{proof}

\section{Application: set partitions}\label{Section-set partitions}
A \emph{set partition} of $[n]$ is a collection $\{B_{1},B_{2},\ldots,B_{m}\}$ of subsets of $[n]$ such that $B_{i}\neq\varnothing$, $B_{i}\cap B_{j}=\varnothing$ for $i\neq j$, and $B_{1}\cup\cdots\cup B_{m}=[n]$.
There are several well-known representations of set partitions,
such as the standard representation,
the block representation,
and the canonical representation (i.e., via restricted growth functions).
Each of these representations is associated with specific statistics and enumerative properties.
We refer the reader to Mansour's book \cite{Mansour-2013}  for further details.

Throughout this paper,
we write a set partition as $B_{1}/B_{2}/\cdots/B_{m}$,
where \begin{align} \label{set-partition-max-order}
	\max B_{1}<\max B_{2}<\cdots<\max B_{m},
\end{align}  
and the elements within each block are listed in increasing order.
For instance,  the partition 
$\{\{1,3,5,7\},\{2,6\},\{4\},\{8,9\}\}$ of $[9]$  is written as 
$4/26/1357/89$.
The sequence of cardinalities $(|B_{1}|,|B_{2}|,\ldots,|B_{m}|)$ is
called the \emph{shape} of the partition $B_{1}/B_{2}/\cdots/B_{m}$.
\begin{remark}
The most common way to arrange the blocks of a set partition is to order them in the standard order, namely
$\min B_{1}<\min B_{2}<\cdots<\min B_{m}.$
Throughout this paper, however, we order the blocks of a set partition as in (\ref{set-partition-max-order}),
because ordering blocks by their maximum elements preserves the equidistribution of many Mahonian statistics; see \cite{Liu-2022}.	
\end{remark}

Recall that  $\alpha=(\alpha_{1},\alpha_{2},\ldots,\alpha_{m})\models n$.
Let
\begin{align*}
\mathcal{P}_{\alpha}&=\{\text{set~partitions~of~}[n]\text{~with~shape~}\alpha\}\\
&=\{B_{1}/B_{2}/\cdots/B_{m}:\max B_{1}<\max B_{2}<\cdots<\max B_{m},~|B_{i}|=\alpha_{i}~\text{for~all~}i\}.
\end{align*} 
In the next two subsections, we consider two representations of elements of $\mathcal{P}_{\alpha}$, namely the word representation and the permutation representation.

\subsection{The word representation of set partitions with a fixed shape}
Let $\sigma=B_{1}/B_{2}/\cdots/B_{m}\in\mathcal{P}_{\alpha}$.
The \emph{word representation} of $\sigma$,
denoted by $w(\sigma)$, is defined by
$w(\sigma)=w_{1}w_{2}\ldots w_{n}\in\mathfrak{S}_{\alpha}$ such that $w_{i}=j$ if $i$ appears in $B_{j}$. 
Mahonian statistics on set partitions under the word representation were studied in \cite{Liu-2022},
where the author referred to this representation as the ``Mahonian representation''. 
For example,  
let $\sigma=B_{1}/B_{2}/B_{3}/B_{4}=4/26/1357/89\in\mathcal{P}_{(1,2,4,2)}$.
Let $w(\sigma)=w_{1}w_{2}\ldots w_{9}$.
By definition, we have
${w_{4}=1,}$ 
${w_{2}=w_{6}=2,}$
${w_{1}=w_{3}=w_{5}=w_{7}=3,}$
${w_{8}=w_{9}=4.}$
Therefore, $w(\sigma)=323132344$.

Let $\mathcal{P}_{\alpha}^{\text{word}}$ denote the set of words in $\mathfrak{S}_{\alpha}$ corresponding to partitions of $[n]$ of shape $\alpha$ under the word representation.
Clearly,
$$w:\mathcal{P}_{\alpha}\rightarrow\mathcal{P}_{\alpha}^{\text{word}}$$
is a one-to-one correspondence.

Note that $w\in\mathcal P_\alpha^{\mathrm{word}}$ if and only if the last occurrences of $1,2,\ldots,m$ appear in that order from left to right. Equivalently, the last occurrence of each $j\in[m]$ is a weak right-to-left minimum. Consequently,
\begin{align}\label{eq-set-partition-word-representation}
	\mathcal{P}_{\alpha}^{\text{word}}=
	\Uplusmed_{\text{supp}(R)=[m]}		
	\mathfrak{S}_{\alpha,R}.		
\end{align}

Combining (\ref{eq-set-partition-word-representation}) with Theorems \ref{Thm-main-Mahonian-Rlwmin}--\ref{Thm-main-r-Euler--Mahonian-Rlwmin},
we obtain that for each fixed positive integer $r$,  
the statistics within each of the following three classes are equidistributed on $\mathcal{P}_{\alpha}^{\text{word}}$

1. Mahonian statistics:
\textsf{inv}, \textsf{maj}, \textsf{den}, \textsf{mak}, \textsf{mad}, $\textsf{inv}_{r}$, $r\textsf{maj}$,  and $r$\textsf{den};

2. Euler--Mahonian statistics:  
$(\textsf{des},\textsf{maj})$,
$(\textsf{exc},\textsf{den})$,
and $(\textsf{des},\textsf{mak})$; 

3. $r$-Euler--Mahonian statistics: 
$(r\textsf{des},r\textsf{maj})$ and
$(r\textsf{exc},r\textsf{den})$.

\noindent The equidistribution results for the above Mahonian statistics other than $r\textsf{den}$, as well as those for the above Euler--Mahonian statistics, were already established in \cite{Liu-2022}.

In fact, by combining (\ref{eq-set-partition-word-representation}) with Theorems \ref{Thm-main-Mahonian-Rlwmin}--\ref{Thm-main-r-Euler--Mahonian-Rlwmin},
we further obtain the following refinement: 
for each fixed positive integer $r$,  the statistics within each of the following three classes are equidistributed on $\mathcal{P}_{\alpha}^{\text{word}}$

1. (\textsf{inv},\textsf{rlwmin}), (\textsf{maj},\textsf{rlwmin}), (\textsf{den},\textsf{rlwmin}),
(\textsf{mak},\textsf{rlwmin}), 
(\textsf{mad},\textsf{rlwmin}), 
($\textsf{inv}_{r}$,\textsf{rlwmin}),
($r\textsf{maj}$,\textsf{rlwmin}),  
and ($r$\textsf{den},\textsf{rlwmin});

2. $(\textsf{rlwmin},\textsf{des},\textsf{maj})$,
$(\textsf{rlwmin},\textsf{exc},\textsf{den})$,
and $(\textsf{rlwmin},\textsf{des},\textsf{mak})$; 

3.
$(\textsf{rlwmin},r\textsf{des},r\textsf{maj})$ and
$(\textsf{rlwmin},r\textsf{exc},r\textsf{den})$.

\subsection{The permutation representation of set partitions with a fixed shape}
In this subsection, we introduce the \emph{permutation representation} of set partitions and
define several associated Mahonian statistics.

Let $\sigma=B_{1}/B_{2}/\cdots/B_{m}\in\mathcal{P}_{\alpha}$.
The \emph{permutation representation} of $\sigma$, 
denoted by  $\pi(\sigma)$, 
is the permutation in $\mathfrak{S}_{n}$ obtained from $\sigma$ by erasing the bars. 
For example,  
if $\sigma=4/26/1357/89\in\mathcal{P}_{(1,2,4,2)}$,
then $\pi(\sigma)=426135789$.

Let $\mathcal{P}_{\alpha}^{\text{perm}}$ denote the set of permutations in $\mathfrak{S}_{n}$ corresponding to partitions of $[n]$ of shape $\alpha$ under the permutation representation.
For a fixed composition $\alpha$, 
a set partition $\sigma$ can be recovered from $\pi(\sigma)=\pi_{1}\pi_{2}\ldots\pi_{n}$ by inserting bars after the letters $\pi_{\alpha_{1}}$, $\pi_{\alpha_{1}+\alpha_{2}},\ldots,\pi_{\alpha_{1}+\cdots+\alpha_{m-1}}$.
Consequently,
$$\pi:\mathcal{P}_{\alpha}\rightarrow\mathcal{P}_{\alpha}^{\text{perm}}$$
is a one-to-one correspondence.

Recall that  $\alpha=(\alpha_{1},\alpha_{2},\ldots,\alpha_{m})\models n$.
Let $S=\Sigma(\alpha)$, that is,
$S=\{s_{1},s_{2},\ldots,s_{m-1}\}$ with $s_{i}=\sum_{j=1}^{i}\alpha_{j}$ for all $i\in[m-1]$. 
Let $\pi\in\mathcal{P}_{\alpha}^{\text{perm}}$ be the permutation representation of the set partition ${B_{1}/B_{2}/\cdots/B_{m}\in\mathcal{P}_{\alpha}}$.
It is clear that $\pi\in\mathcal{D}_{n}^{\smallsubseteq}(S)$.
Since
$${\max B_{1}<\max B_{2}<\cdots<\max B_{m},}$$
we have $\bar S :=S\cup\{n\}\subseteq\textsf{PLrmax}(\pi)$.
Conversely, if $\pi\in D_{n,P}^{\smallsubseteq}(S)$ with $\bar S\subseteq P$, then inserting bars after the letters $\pi_{s_1},\pi_{s_2},\ldots,\pi_{s_{m-1}}$ yields a set partition in $\mathcal P_\alpha$.
Therefore,
\begin{align}\label{eq-set-partition-perms}
	\mathcal{P}_{\alpha}^{\text{perm}}=
	\Uplusmed_{P\!\!~:\!\!~\bar{S}\subseteq P}
	\mathcal{D}_{n,P}^{\smallsubseteq}(S).
\end{align}	
Combining (\ref{eq-set-partition-perms}) with Theorem \ref{Thm-Mahonian-permutation-fix-Des-PLrmax}, 
we obtain the following theorem,
which establishes four equidistributed Mahonian statistics on set partitions with a fixed shape, 
under the permutation representation.
\begin{theorem}\label{Thm-Mahonian-set-partition-perm-rep}
	Let $r\geqslant1$. We have
	\begin{align*} 
		&\sum_{\pi\in\mathcal{P}_{\alpha}^{\emph{\text{perm}}}}q^{\emph{\textsf{inv}}(\pi)}
		=\sum_{\pi\in\mathcal{P}_{\alpha}^{\emph{\text{perm}}}}q^{\emph{\textsf{imaj}}(\pi)}
		=\sum_{\pi\in\mathcal{P}_{\alpha}^{\emph{\text{perm}}}}q^{\emph{\textsf{imak}}(\pi)}
     	=\sum_{\pi\in\mathcal{P}_{\alpha}^{\emph{\text{perm}}}}q^{\emph{\textsf{iinv}}_{r}(\pi)}
	\end{align*}	
\end{theorem}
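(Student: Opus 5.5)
The plan is to sum the refined equidistribution of Theorem \ref{Thm-Mahonian-permutation-fix-Des-PLrmax} over the blocks of the disjoint union (\ref{eq-set-partition-perms}). No new bijection is needed. All the work is bookkeeping, checking that the decomposition matches the hypotheses of that theorem.

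First, fix $\alpha=(\alpha_1,\dots,\alpha_m)\models n$ and let $S=\Sigma(\alpha)=\{s_1,\dots,s_{m-1}\}$, so that $\bar S=S\cup\{n\}$. By (\ref{eq-set-partition-perms}), $\mathcal{P}_{\alpha}^{\text{perm}}$ is the disjoint union of the sets $\mathcal{D}_{n,P}^{\smallsubseteq}(S)$ taken over all $P\subseteq[n]$ with $\bar S\subseteq P$. The union is disjoint because $\textsf{PLrmax}(\pi)$ is a well-defined function of $\pi$. I will briefly double-check the two inclusions behind (\ref{eq-set-partition-perms}):
\begin{itemize}
\item If $\pi$ comes from a partition in $\mathcal{P}_\alpha$, then each block is listed increasingly, so $\textsf{Des}(\pi)\subseteq S$. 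Since the blocks are ordered by increasing maxima, each last entry $\pi_{s_i}=\max B_i$ exceeds everything before it, so $\bar S\subseteq \textsf{PLrmax}(\pi)$.
\item Conversely, cutting such a $\pi$ after positions $s_1,\dots,s_{m-1}$ gives blocks whose maxima are their last letters. Because these positions are left-to-right maxima, the maxima increase, so the result lies in $\mathcal{P}_\alpha$.
\end{itemize}

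Next, let $\textsf{st}$ be any of $\textsf{inv}$, $\textsf{imaj}$, $\textsf{imak}$, $\textsf{iinv}_r$. Then
\[
\sum_{\pi\in\mathcal{P}_{\alpha}^{\text{perm}}}q^{\textsf{st}(\pi)}=\sum_{P:\,\bar S\subseteq P}\ \sum_{\pi\in\mathcal{D}_{n,P}^{\tinysubseteq}(S)}q^{\textsf{st}(\pi)}.
\]
By (\ref{eq-Thm-Mahonian-permutation-fix-Des-PLrmax-1}), each inner sum is the same polynomial for all four choices of $\textsf{st}$. Summing over $P$ gives the equalities in Theorem \ref{Thm-Mahonian-set-partition-perm-rep}. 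If $P$ is not $S$-suffix-closed, the inner set is empty, and the equality holds trivially.

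The only step needing care is the identification (\ref{eq-set-partition-perms}), which is already established above. Beyond that the argument is immediate, so I expect no real obstacle.
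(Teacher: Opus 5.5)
Your proposal is correct and is exactly the paper's argument. The paper obtains this theorem by summing the $P$-refined equidistribution (\ref{eq-Thm-Mahonian-permutation-fix-Des-PLrmax-1}) over the disjoint decomposition (\ref{eq-set-partition-perms}) of $\mathcal{P}_{\alpha}^{\text{perm}}$ into the sets $\mathcal{D}_{n,P}^{\smallsubseteq}(S)$ with $\bar S\subseteq P$; your check of the two inclusions spells out what the paper states more briefly.
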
 

\begin{remark}\label{Remark-irmaj-imad-iden-set-partition}
	\begin{samepage}	
		The statistics \textsf{istat}, \textsf{iden}, $\textsf{i}r\textsf{maj}$, \textsf{imad}, and \textsf{isor} are not equidistributed with \textsf{inv} on $\mathcal{P}_{\alpha}^{\text{perm}}$.
	\end{samepage}	
\end{remark}

In fact, 
by combining (\ref{eq-set-partition-perms}) with Theorem \ref{Thm-Mahonian-permutation-fix-Des-PLrmax}, we further obtain the following refinement,
which establishes four equidistributed Mahonian--Stirling-type pairs on set partitions with a fixed shape, under the permutation representation.
\begin{theorem}\label{Thm-set-partitions-Mahonian-Stirling}
	Let $r\geqslant1$. We have
	\begin{align*} 
			\begin{split}
					\sum_{\pi\in\mathcal{P}_{\alpha}^{\emph{\text{perm}}}}q^{\emph{\textsf{inv}}(\pi)}x^{\emph{\textsf{lrmax}}(\pi)}
					=\sum_{\pi\in\mathcal{P}_{\alpha}^{\emph{\text{perm}}}}q^{\emph{\textsf{imaj}}(\pi)}x^{\emph{\textsf{lrmax}}(\pi)}
					=\sum_{\pi\in\mathcal{P}_{\alpha}^{\emph{\text{perm}}}}q^{\emph{\textsf{imak}}(\pi)}x^{\emph{\textsf{lrmax}}(\pi)}
					=\sum_{\pi\in\mathcal{P}_{\alpha}^{\emph{\text{perm}}}}q^{\emph{\textsf{iinv}}_{r}(\pi)}x^{\emph{\textsf{lrmax}}(\pi)}.
				\end{split}
		\end{align*}
\end{theorem}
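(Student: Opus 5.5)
The approach is to refine the decomposition (\ref{eq-set-partition-perms}) by the left-to-right maxima statistic and then apply Theorem \ref{Thm-Mahonian-permutation-fix-Des-PLrmax} block by block. The key observation is that on each piece $\mathcal{D}_{n,P}^{\smallsubseteq}(S)$ of that disjoint union, the statistic $\textsf{lrmax}$ is constant. Indeed, $\textsf{lrmax}(\pi)=|\textsf{PLrmax}(\pi)|=|P|$ for every $\pi\in\mathcal{D}_{n,P}^{\smallsubseteq}(S)$. Hence for any statistic $\textsf{st}$,
\begin{align*}
\sum_{\pi\in\mathcal{P}_{\alpha}^{\text{perm}}}q^{\textsf{st}(\pi)}x^{\textsf{lrmax}(\pi)}
=\sum_{P\,:\,\bar S\subseteq P}x^{|P|}\sum_{\pi\in\mathcal{D}_{n,P}^{\tinysubseteq}(S)}q^{\textsf{st}(\pi)}.
\end{align*}

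Concretely, I would carry this out in three steps. First, recall that (\ref{eq-set-partition-perms}) is a disjoint union over subsets $P\subseteq[n]$ containing $\bar S=S\cup\{n\}$; here $S=\Sigma(\alpha)$ with $m-1$ elements, so $\alpha$ plays the role of the composition in Section \ref{Section-restricted-permutations} with $k=m-1$. Second, rewrite each generating function as the displayed sum above, using that the sets $\mathcal{D}_{n,P}^{\smallsubseteq}(S)$ are pairwise disjoint for distinct $P$ and that $\textsf{lrmax}$ equals $|P|$ on each. Third, invoke (\ref{eq-Thm-Mahonian-permutation-fix-Des-PLrmax-1}): for each fixed $P$, the inner sums for $\textsf{st}\in\{\textsf{inv},\textsf{imaj},\textsf{imak},\textsf{iinv}_{r}\}$ coincide. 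Multiplying by $x^{|P|}$ and summing over $P$ then gives the four claimed equalities. Theorem \ref{Thm-Mahonian-set-partition-perm-rep} is recovered by setting $x=1$.

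There is essentially no obstacle. The only point requiring care is that the union in (\ref{eq-set-partition-perms}) ranges over arbitrary $P\supseteq\bar S$, including those that are not $S$-suffix-closed. For such $P$ the corresponding sets are empty, as noted in the proof of Theorem \ref{Thm-Mahonian-permutation-fix-Des-PLrmax}, so they contribute nothing and the equidistribution holds trivially on them.
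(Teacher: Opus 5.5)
Your proposal is correct and is the paper's own argument: the paper obtains this theorem by combining the disjoint decomposition (\ref{eq-set-partition-perms}) with (\ref{eq-Thm-Mahonian-permutation-fix-Des-PLrmax-1}), using your observation that $\textsf{lrmax}$ is constant and equal to $|P|$ on each piece $\mathcal{D}_{n,P}^{\smallsubseteq}(S)$. Your remark that non-$S$-suffix-closed $P$ give empty pieces is the same point the paper makes in its proof of Theorem \ref{Thm-Mahonian-permutation-fix-Des-PLrmax}.
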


\section{Application: $221$-avoiding words}\label{Section-221-avoiding words}  
Given two words $w=w_{1}w_{2}\ldots w_{n}$ and  $v=v_{1}v_{2}\ldots v_{s}$ 
with $s\leqslant n$, 
we say that $w$ \emph{contains}  $v$ if there exists a \emph{subword}
$w_{i_{1}}w_{i_{2}}\ldots w_{i_{s}}$
(so $i_{1}<i_{2}<\cdots<i_{s}$  by definition of subword) that is order-isomorphic to $v$.
Otherwise, we say that $w$ \emph{avoids} $v$, or that $w$ is $v$-avoiding.
Let $\mathfrak{S}_{\alpha}(v)$ denote the set of $v$-avoiding words in $\mathfrak{S}_{\alpha}$.
Heubach and Mansour \cite{Mansour-2006} proved that 
\begin{align}\label{eq-Heubach-Mansour-1}
	|\mathfrak{S}_{\alpha}(212)|=|\mathfrak{S}_{\alpha}(221)|=\prod_{i=1}^{m-1}(\alpha_{1}+\alpha_{2}+\cdots+\alpha_{i}+1).
\end{align}

The elements of $\mathfrak{S}_{\alpha}(212)$ are called \emph{Stirling permutations} 
(also called generalized Stirling permutations in the literature), 
and we write $\mathcal{Q}_{\alpha}=\mathfrak{S}_{\alpha}(212)$.
Stirling permutations were first introduced and studied by Gessel and Stanley \cite{Gessel-1978} in case of $\alpha=(2,2,\ldots,2)$ and later by Park \cite{Park-1994(1),Park-1994(2),Park-1994(3)} in case of $\alpha=(k,k,\ldots,k)$.
Stirling permutations have been widely studied over the past decades;
see Gessel's note in \cite{Gessel-2020}, 
which includes a list of articles on Stirling permutations.

Here we consider the other class, $\mathfrak{S}_{\alpha}(221)$. 
We refer to its elements as \emph{quasi-increasing words},
since every weakly increasing word
(equivalently, every $21$-avoiding word)
belongs to $\mathfrak{S}_{\alpha}(221)$. 
We write ${\mathcal{I}_{\alpha}=\mathfrak{S}_{\alpha}(221)}$.

Recall that $\alpha=(\alpha_{1},\alpha_{2},\ldots,\alpha_{m})\models n$.
Let $\alpha^{\prime}=(\alpha_{1},\alpha_{2},\ldots,\alpha_{m-1})$.
Given $w\in\mathcal{I}_{\alpha}$, 
let $w^{\prime}$ be the word obtained from $w$ by removing all occurrences of $m$.
Clearly, we have $w^{\prime}\in\mathcal{I}_{\alpha^{\prime}}$.
Conversely, 
given $w^{\prime}\in\mathcal{I}_{\alpha^{\prime}}$,
we obtain $(\alpha_{1}+\cdots+\alpha_{m-1}+1)$ words in $\mathcal{I}_{\alpha}$ by the following procedure:

1. Insert an $m$ into $w^{\prime}$ to serve as the first occurrence of $m$ in $w$;

2. Append $m^{\alpha_{m}-1}$ at the end (if $\alpha_{m}-1>0$).

\noindent For example, let $\alpha=(2,3,3)$, so $\alpha^{\prime}=(2,3)$.
Given the word $21122\in\mathcal{I}_{\alpha^{\prime}}$,
we can obtain $\alpha_{1}+\alpha_{2}+1=2+3+1=6$ words in $\mathcal{I}_{\alpha}$ as follows.

   \vskip -1.0mm

1. Insert a $\textcolor{red}{3}$ into $21122$, 
producing the following words:
$$21122\textcolor{red}{3}, 
2112\textcolor{red}{3}2,
211\textcolor{red}{3}22,
21\textcolor{red}{3}122,
2\textcolor{red}{3}1122,
\textcolor{red}{3}21122.$$

   \vskip -2.0mm

2. Append $\textcolor{blue}{33}$ at the end of each word, 
yielding the following words in $\mathcal{I}_{\alpha}$:  
$$21122\textcolor{red}{3}\textcolor{blue}{33},2112\textcolor{red}{3}2\textcolor{blue}{33},211\textcolor{red}{3}22\textcolor{blue}{33},21\textcolor{red}{3}122\textcolor{blue}{33},2\textcolor{red}{3}1122\textcolor{blue}{33},\textcolor{red}{3}21122\textcolor{blue}{33}.$$

Note that inserting the first $m$ creates exactly $t$ new inversions when it is placed so that it has $t$ letters to its right,
while appending $m^{\alpha_{m}-1}$ does not change the inversion number.
Thus, 
\begin{align}\label{eq-quasi-increasing-inv}
	\sum_{w\in\mathcal{I}_{\alpha}}q^{\textsf{inv}(w)}
	=[\alpha_{1}+\cdots+\alpha_{m-1}+1]_{q}\sum_{w^{\prime}\in\mathcal{I}_{\alpha^{\prime}}}q^{\textsf{inv}(w^{\prime})}
	=\prod_{i=1}^{m-1}[\alpha_{1}+\alpha_{2}+\cdots+\alpha_{i}+1]_{q}.
\end{align}

It is easy to see that $w\in\mathcal{I}_{\alpha}$ if and only if
for each $b$ with $2\leqslant b\leqslant m$,
all letters to the right of the second occurrence of $b$ are greater than or equal to $b$.
Consequently, the second through the $\alpha_{b}$-th occurrences of $b$ are all weak right-to-left minima.
Let $$B=\{2^{\alpha_{2}-1},3^{\alpha_{3}-1},\ldots,m^{\alpha_{m}-1}\},$$
where $i^{0}$ means that $i$ does not appear in $B$.
Thus, 
$w\in\mathcal{I}_{\alpha}$ if and only if $B\subseteq\textsf{Rlwmin}(w)$. 
Therefore,
\begin{align}\label{eq-quasi-increasing-words-and-Rlwmin}
	\mathcal{I}_{\alpha}=
	\Uplusmed_{R\!\!~:\!\!~B\subseteq R}	
	\mathfrak{S}_{\alpha,R}.
\end{align}
Combining (\ref{eq-quasi-increasing-inv}), (\ref{eq-quasi-increasing-words-and-Rlwmin}), and Theorems \ref{Thm-main-Mahonian-Rlwmin}--\ref{Thm-main-r-Euler--Mahonian-Rlwmin},
we obtain the following theorem.
\begin{theorem}\label{Mahonian-quasi-increasing-words}
Let $r$ be a positive integer. We have

\medskip
\noindent
(1) { Mahonian statistics.}
	\begin{align*}
		\begin{split}
			&\sum_{w\in\mathcal{I}_{\alpha}}q^{\emph{\textsf{inv}}(w)~}=
			\sum_{w\in\mathcal{I}_{\alpha}}q^{\emph{\textsf{maj}}(w)~}
			=\sum_{w\in\mathcal{I}_{\alpha}}q^{\emph{\textsf{den}}(w)~}
			=\sum_{w\in\mathcal{I}_{\alpha}}q^{\emph{\textsf{mak}}(w)}
			=\sum_{w\in\mathcal{I}_{\alpha}}q^{\emph{\textsf{mad}}(w)}\\
			=&\sum_{w\in\mathcal{I}_{\alpha}}q^{\emph{\textsf{inv}}_{r}(w)}
			=\sum_{w\in\mathcal{I}_{\alpha}}q^{r\emph{\textsf{maj}}(w)}
  		    =\sum_{w\in\mathcal{I}_{\alpha}}q^{r\emph{\textsf{den}}(w)}
			=\prod_{i=1}^{m-1}[\alpha_{1}+\alpha_{2}+\cdots+\alpha_{i}+1]_{q},
		\end{split}
	\end{align*}
	
\medskip
\noindent
(2) { Euler--Mahonian statistics.}
\begin{align*}
\sum_{w\in\mathcal{I}_{\alpha}}t^{\emph{\textsf{des}}(w)}q^{\emph{\textsf{maj}}(w)}=
\sum_{w\in\mathcal{I}_{\alpha}}t^{\emph{\textsf{exc}}(w)}q^{\emph{\textsf{den}}(w)}=
\sum_{w\in\mathcal{I}_{\alpha}}t^{\emph{\textsf{des}}(w)}q^{\emph{\textsf{mak}}(w)},
\end{align*}

\medskip
\noindent
(3) { $r$-Euler--Mahonian statistics.}
\begin{align*}
	\sum_{w\in\mathcal{I}_{\alpha}}t^{r\emph{\textsf{des}}(w)}q^{r\emph{\textsf{maj}}(w)}=
	\sum_{w\in\mathcal{I}_{\alpha}}t^{r\emph{\textsf{exc}}(w)}q^{r\emph{\textsf{den}}(w)}.
\end{align*}
\end{theorem}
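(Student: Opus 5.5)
The plan is to deduce Theorem \ref{Mahonian-quasi-increasing-words} directly from the decomposition (\ref{eq-quasi-increasing-words-and-Rlwmin}) together with Theorems \ref{Thm-main-Mahonian-Rlwmin}--\ref{Thm-main-r-Euler--Mahonian-Rlwmin}, using (\ref{eq-quasi-increasing-inv}) only to identify the common generating function in part (1).

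First, I would confirm the characterization behind (\ref{eq-quasi-increasing-words-and-Rlwmin}).
\begin{itemize}
\item Suppose $w$ contains $221$. Then some letter $b$ has two occurrences followed later by a smaller letter $a<b$. The second occurrence of $b$ in $w$ lies weakly before the second $b$ of the pattern, so it has $a<b$ to its right.
\item Conversely, suppose $w$ avoids $221$. Then every letter after the second occurrence of $b$ is $\geqslant b$. Hence each occurrence of $b$ from the second onward is a weak right-to-left minimum.
\item The first occurrence of $b$ may or may not be a weak right-to-left minimum, which is exactly why we need $B\subseteq \textsf{Rlwmin}(w)$ rather than equality.
\item If $B\subseteq\textsf{Rlwmin}(w)$, then in particular the second occurrence of each $b\geqslant 2$ is a weak right-to-left minimum, so nothing smaller than $b$ follows it and no $221$ pattern exists.
\end{itemize}
Since the sets $\mathfrak{S}_{\alpha,R}$ partition $\mathfrak{S}_{\alpha}$ according to the value of $\textsf{Rlwmin}$, this gives the disjoint union $\mathcal{I}_{\alpha}=\Uplusmed_{R:\,B\subseteq R}\mathfrak{S}_{\alpha,R}$.

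Second, for each statistic (or pair of statistics) in parts (1)--(3), I would write the generating function over $\mathcal{I}_{\alpha}$ as the sum over all multisets $R\supseteq B$ of the generating function over $\mathfrak{S}_{\alpha,R}$.
\begin{itemize}
\item For part (1), Theorem \ref{Thm-main-Mahonian-Rlwmin} makes each summand independent of which of the eight statistics is used.
\item For part (2), Theorem \ref{Thm-main-Euler--Mahonian-Rlwmin} does the same for the three Euler--Mahonian pairs.
\item For part (3), Theorem \ref{Thm-main-r-Euler--Mahonian-Rlwmin} does the same for the two $r$-Euler--Mahonian pairs.
\end{itemize}
Summing over $R\supseteq B$ yields the claimed equalities in each part.

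Finally, the common value in (1) equals the \textsf{inv} generating function, which (\ref{eq-quasi-increasing-inv}) gives as $\prod_{i=1}^{m-1}[\alpha_1+\cdots+\alpha_i+1]_q$. That recursion is justified by the insertion argument already described: placing the first $m$ with $t$ letters to its right creates exactly $t$ inversions, and appending the trailing $m$'s creates none.

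There is no real obstacle, since all the substantive work sits in the main theorems. The only step needing care is the equivalence $w\in\mathcal{I}_{\alpha}\iff B\subseteq\textsf{Rlwmin}(w)$, in particular the point that the condition is containment of $B$, not equality.
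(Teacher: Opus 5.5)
Your proposal is correct and follows the paper's own argument: decompose $\mathcal{I}_{\alpha}=\Uplusmed_{R:\,B\subseteq R}\mathfrak{S}_{\alpha,R}$ as in (\ref{eq-quasi-increasing-words-and-Rlwmin}), apply Theorems \ref{Thm-main-Mahonian-Rlwmin}--\ref{Thm-main-r-Euler--Mahonian-Rlwmin} on each summand, and use (\ref{eq-quasi-increasing-inv}) to identify the common generating function. One step in your converse direction is left implicit: the containment $B\subseteq\textsf{Rlwmin}(w)$ forces the second occurrence of each $b$ to be a weak right-to-left minimum because the occurrences of $b$ that are weak right-to-left minima always form a suffix of all occurrences of $b$.
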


The statistics \textsf{inv} and \textsf{maj} on \emph{$k$-Stirling permutations}, that is, $\mathcal{Q}_{\alpha}$ with $\alpha=(k,k,\dots,k)$, 
were studied combinatorially by Park \cite{Park-1994(1)}.
In general, \textsf{inv} and \textsf{maj} are not equidistributed on $\mathcal{Q}_\alpha$.
Moreover, \textsf{inv} does not have the same distribution on $\mathcal{Q}_\alpha$ and $\mathcal{I}_\alpha$, 
and neither does \textsf{maj}.
In what follows, we prove that 
\textsf{des}, \textsf{lrmin}, and \textsf{lrmax} each have the same distribution on these two classes.
Here  ${\textsf{lrmin}}$  denotes the number of left-to-right minima and defined by
\begin{align*}
\textsf{lrmin}(w)=|\{i:w_{i}<w_{j},\text{~for~all~}j<i\}|.
\end{align*}
\begin{theorem} \label{Thm-Stirling-quasi-increasing}
	We have
	\begin{align}
		\sum_{w\in\mathcal{Q}_{\alpha}}q^{\emph{\textsf{des}}(w)~\!~}
		&=\sum_{w\in\mathcal{I}_{\alpha}}q^{\emph{\textsf{des}}(w)},
		\label{eq-Thm-Stirling-quasi-increasing-1}\\
		\sum_{w\in\mathcal{Q}_{\alpha}}q^{\emph{\textsf{lrmin}}(w)}
		&=
		\sum_{w\in\mathcal{I}_{\alpha}}q^{\emph{\textsf{lrmin}}(w)},
		\label{eq-Thm-Stirling-quasi-increasing-2}\\
		\sum_{w\in\mathcal{Q}_{\alpha}}q^{\emph{\textsf{lrmax}}(w)}
		&=
        \sum_{w\in\mathcal{I}_{\alpha}}q^{\emph{\textsf{lrmax}}(w)}.
        \label{eq-Thm-Stirling-quasi-increasing-3}
	\end{align}
\end{theorem}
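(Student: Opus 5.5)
Both classes admit a recursive construction by inserting copies of the largest letter $m$ into a word on the smaller alphabet. I would prove the three identities by induction on $m$, showing for each statistic that the insertion step contributes the same factor in both classes. Write $N=\alpha_{1}+\cdots+\alpha_{m-1}$ for the length of $w'$.

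For $\mathcal{I}_{\alpha}$ the construction is the one described before (\ref{eq-quasi-increasing-inv}): insert one $m$ into one of the $N+1$ gaps of $w'\in\mathcal{I}_{\alpha'}$ and append $m^{\alpha_{m}-1}$. For $\mathcal{Q}_{\alpha}=\mathfrak{S}_{\alpha}(212)$ the standard construction inserts the block $m^{\alpha_{m}}$ contiguously into one of the $N+1$ gaps of $w'\in\mathcal{Q}_{\alpha'}$. Contiguity is forced because any letter $a<m$ lying between two copies of $m$ would create the pattern $212$. This also reproves (\ref{eq-Heubach-Mansour-1}).

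\textbf{Descents.} Inserting a block of $m$'s into a gap changes $\textsf{des}$ as follows.
\begin{itemize}
\item Gap at the end, or gap that is a descent position of $w'$ (between $w'_{i}>w'_{i+1}$): $\textsf{des}$ is unchanged. In the descent case the old descent is destroyed, but $w'_{i}<m$ and a new descent $m>w'_{i+1}$ is created.
\item Any other gap (the front, or a weak ascent $w'_{i}\leqslant w'_{i+1}$): $\textsf{des}$ increases by one.
\end{itemize}
This holds whether the inserted block has length $1$ (in $\mathcal{I}$) or $\alpha_{m}$ (in $\mathcal{Q}$). The trailing $m^{\alpha_{m}-1}$ appended in $\mathcal{I}$ creates no descents: it is preceded either by the inserted $m$ or by a letter at most $m$.

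So in both classes each $w'$ with $\textsf{des}(w')=d$ produces $d+1$ words with $\textsf{des}=d$ and $N-d$ words with $\textsf{des}=d+1$. The two generating polynomials therefore satisfy the same recursion,
$$A_{\alpha}(q)=\sum_{w'}\bigl((d+1)q^{d}+(N-d)q^{d+1}\bigr),$$
with the same initial value at $m=1$, which is the single word $1^{\alpha_{1}}$. Induction gives (\ref{eq-Thm-Stirling-quasi-increasing-1}).

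\textbf{Left-to-right minima.} The new $m$'s are never strict left-to-right minima unless they are inserted at the front of $w'$. The status of the old letters is unchanged, except when $m$ goes to the front: then the $m$ becomes a left-to-right minimum and every old left-to-right minimum remains one, since $m$ is larger than all of them. Appended $m$'s are never left-to-right minima.

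So in both classes exactly one of the $N+1$ insertions increases $\textsf{lrmin}$ by $1$, and the other $N$ leave it unchanged. The common recursion is a factor $(q+N)$, which gives (\ref{eq-Thm-Stirling-quasi-increasing-2}).

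\textbf{Left-to-right maxima.} I read $\textsf{lrmax}$ on words with the strict convention, analogous to $\textsf{lrmin}$. Inserting $m$ (or the block $m^{\alpha_{m}}$) into a gap has the following effect.
\begin{itemize}
\item The first inserted $m$ is always a new left-to-right maximum.
\item Further copies of $m$ are not, since the maximum is strict. This covers both the rest of the block in $\mathcal{Q}$ and the appended copies in $\mathcal{I}$.
\item Old left-to-right maxima to the left of the insertion point survive. Those to the right are killed, because each is at most $m-1<m$.
\end{itemize}
Hence if the gap is after position $j$, the new value is $1$ plus the number of left-to-right maxima of $w'$ among its first $j$ letters. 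This rule is identical in both classes.

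The main obstacle is that this increment depends on the whole distribution of prefix record counts of $w'$, not just on $\textsf{lrmax}(w')$. For that reason I would work with the refined generating function over pairs $(w', j)$ and show the two classes carry identical refined data. The cleanest route is to build, by induction, a bijection $\phi_{\alpha}:\mathcal{Q}_{\alpha}\to\mathcal{I}_{\alpha}$ that preserves the sequence of left-to-right maximum values together with their positions. Given such a bijection at level $\alpha'$, extend it to level $\alpha$ by pairing the insertion into gap $j$ of $w'$ with the insertion into gap $j$ of $\phi_{\alpha'}(w')$. Under this pairing, the prefix record counts agree in every gap, so the increment rule above produces the same $\textsf{lrmax}$ on both sides.

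I expect the real difficulty to be that positions shift: in $\mathcal{Q}$ the block of length $\alpha_{m}$ sits in the middle, whereas in $\mathcal{I}$ the copies are appended at the end. The preserved invariant should therefore be the sequence of record \emph{values} along with the number of letters before each record, measured on the word restricted to letters $<m$, rather than raw positions. I would verify that this invariant is stable under both insertion schemes. Then $\textsf{lrmax}$ is equidistributed and (\ref{eq-Thm-Stirling-quasi-increasing-3}) follows.
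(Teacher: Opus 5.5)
Your proposal is correct and follows the paper's proof. It uses the same insertion recursions for $\textsf{des}$ (you rederive the recurrence for $\mathcal{Q}_{\alpha}$ that the paper cites from Brenti) and for $\textsf{lrmin}$, and for $\textsf{lrmax}$ it uses the same recursive bijection pairing gap $x$ of $w'$ with gap $x$ of $\varphi(w')$. The ``position shift'' you worry about at the end is a non-issue: every letter after the first inserted $m$ is a non-record in both constructions, so $\textsf{PLrmax}(w)=\textsf{PLrmax}(w'_{1}\cdots w'_{x})\cup\{x+1\}=\textsf{PLrmax}(u)$ holds with raw positions, exactly as in the paper.
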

\begin{remark}
By Theorem \ref{Mahonian-quasi-increasing-words}, 
the statistics \textsf{des} and \textsf{exc} are equidistributed on $\mathcal{I}_{\alpha}$,
whereas they are not equidistributed on $\mathcal{Q}_{\alpha}$.		
\end{remark}
\begin{remark}
The statistics \textsf{lrmin} and \textsf{lrmax} are not equidistributed on $\mathcal{Q}_{\alpha}$.
The statistic \textsf{lrmin} on $k$-Stirling permutations was 
studied in \cite{Park-1994(1),Kuba-2011},
while the statistic \textsf{lrmax},
which appears to be more difficult to analyze, was investigated in \cite{Kuba-2011} from a probabilistic perspective.
\end{remark}

In the remainder of this section, we prove Theorem \ref{Thm-Stirling-quasi-increasing}.
Recall that $\alpha=(\alpha_{1},\alpha_{2},\ldots,\alpha_{m})$.
The theorem is trivial when $m=1$. 
Henceforth assume $m>1$ and
set $\alpha^{\prime}=(\alpha_{1},\alpha_{2},\ldots,\alpha_{m-1})$.
\begin{proof}[Proof of (\ref {eq-Thm-Stirling-quasi-increasing-1})]
Let $A_{\alpha,i}$ denote the number of Stirling permutations in $\mathcal{Q}_{\alpha}$ with $i$ descents.
Brenti \cite[Section 6.6]{Brenti-1989} proved that
\begin{align}\label{eq-Brenti}
	A_{\alpha,i}=(\alpha_{1}+\alpha_{2}+\cdots +\alpha_{m-1}-i+1)A_{\alpha^{\prime},i-1}+(i+1)A_{\alpha^{\prime},i}.
\end{align}
Here $A_{\beta,i}=0$  unless $0\leqslant i\leqslant \ell(\beta)-1$,
where $\ell(\beta)$ denotes the number of parts of $\beta$. 
Moreover, he proved that 
$\sum_{w\in\mathcal{Q}_{\alpha}}q^{\textsf{des}(w)}$
has only real zeros for any $\alpha$.

Let $a_{\alpha,i}$ denote the number of quasi-increasing words in $\mathcal{I}_{\alpha}$ with $i$ descents.
Consider our two-step construction of words in $\mathcal{I}_{\alpha}$ from $w^{\prime}=w^{\prime}_{1}w^{\prime}_{2}\ldots w^{\prime}_{n^{\prime}}\in\mathcal{I}_{\alpha^{\prime}}$,
where $n^{\prime}=\alpha_{1}+\alpha_{2}+\cdots +\alpha_{m-1}$.
For the first step, 
inserting $m$ after $w^{\prime}_{j}$,
where $j$ is either a descent of $w^{\prime}$ or $j=n^{\prime}$,
does not change the number of descents,
whereas inserting $m$ into any other position increases the number of descents by one.
For the second step,
appending $m^{\alpha_{m}-1}$ at the end does not change the number of descents.
This leads to the following recurrence
\begin{align}\label{recursion-des-quasi-increasing word}
	a_{\alpha,i}=(\alpha_{1}+\alpha_{2}+\cdots +\alpha_{m-1}-i+1)a_{\alpha^{\prime},i-1}+(i+1)a_{\alpha^{\prime},i},	\quad 0\leqslant i\leqslant m-1,
\end{align}
Here $a_{\beta,i}=0$ unless $0\leqslant i\leqslant \ell(\beta)-1$.
Clearly, $A_{\alpha,i}$ and $a_{\alpha,i}$ agree in the initial case $m=1$.
Comparing (\ref{recursion-des-quasi-increasing word}) with (\ref{eq-Brenti}),
and using the initial case $m=1$, we obtain by induction on $m$ that $A_{\alpha,i}=a_{\alpha,i}$,
completing the proof. 
\end{proof}

\begin{proof}[Proof of (\ref {eq-Thm-Stirling-quasi-increasing-2})]
Let $B_{\alpha,i}$ denote the number of Stirling permutations in $\mathcal{Q}_{\alpha}$ with $i$ left-to-right minima.
The numbers $B_{\alpha,i}$ satisfy the following recurrence 
\begin{align}\label{eq-lrmin-Stirling}
B_{\alpha,i}=B_{\alpha^{\prime},i-1}+(\alpha_{1}+\alpha_{2}+\cdots +\alpha_{m-1})B_{\alpha^{\prime},i}.
\end{align}	
Here $B_{\beta,i}=0$ unless $1\leqslant i\leqslant \ell(\beta)$.
To see this recurrence,
note that each Stirling permutation $w\in\mathcal{Q}_{\alpha}$ can be obtained from a Stirling permutation  $w^{\prime}\in\mathcal{Q}_{\alpha^{\prime}}$ by inserting the block $m^{\alpha_{m}}$ into $w^{\prime}$.
Inserting $m^{\alpha_{m}}$ at the beginning increases the number of left-to-right minima by one,
whereas inserting $m^{\alpha_{m}}$ in other positions does not change the  number of left-to-right minima.
	
Let $b_{\alpha,i}$ denote the number of quasi-increasing words in $\mathcal{I}_{\alpha}$ with $i$ left-to-right minima.
By our two-step construction of words in $\mathcal{I}_{\alpha}$
from words in $\mathcal{I}_{\alpha^{\prime}}$, 
we see that the numbers $b_{\alpha,i}$
satisfy the same recurrence as $B_{\alpha,i}$.
Clearly, $B_{\alpha,i}$ and $b_{\alpha,i}$ agree in the initial case $m=1$.
By induction on $m$, we obtain $B_{\alpha,i}=b_{\alpha,i}$, which completes the proof.
\end{proof}

\begin{proof}[Proof of (\ref {eq-Thm-Stirling-quasi-increasing-3})]
Recall that $\textsf{PLrmax}(w)$ is the set of positions of the left-to-right maxima of $w$.
We prove the statement by recursively defining a bijection $\varphi:\mathcal{Q}_{\alpha}\rightarrow\mathcal{I}_{\alpha}$
such that 
$$\textsf{PLrmax}(w)=\textsf{PLrmax}(\varphi(w))$$ 
for all $w\in\mathcal{Q}_{\alpha}$.	
If $m=1$, then $\mathcal{Q}_{\alpha}=\mathcal{I}_{\alpha}=\{1^{\alpha_{1}}\}$,
and we take $\varphi$ to be the identity map.
Suppose a bijection $\varphi:\mathcal{Q}_{\alpha^{\prime}}\rightarrow\mathcal{I}_{\alpha^{\prime}}$
has been defined and satisfies $\textsf{PLrmax}(w^{\prime})=\textsf{PLrmax}(\varphi(w^{\prime}))$ for all $w^{\prime}\in\mathcal{Q}_{\alpha^{\prime}}$.
Let $n^{\prime}=\alpha_{1}+\cdots+\alpha_{m-1}$.
Given $w\in\mathcal{Q}_{\alpha}$, 
assume that $w$ is obtained from $w^{\prime}=w^{\prime}_{1}w^{\prime}_{2}\ldots w^{\prime}_{n^{\prime}}\in\mathcal{Q}_{\alpha^{\prime}}$ by inserting the block $m^{\alpha_{m}}$ after $w^{\prime}_{x}$, 
where $0\leqslant x\leqslant n^{\prime}$ (with $x=0$ meaning insertion at the beginning).
Let $u^{\prime}=\varphi(w^{\prime})\in\mathcal{I}_{\alpha^{\prime}}$.
Define $\varphi(w)=u$ to be the word obtained from $u^{\prime}$ by 

1. inserting $m$ after $u^{\prime}_{x}$ (with $x=0$ meaning insertion at the beginning), and 

2. appending $m^{\alpha_{m}-1}$ at the end.

\noindent A direct verification shows that
$$
\textsf{PLrmax}(w)=\textsf{PLrmax}(w^{\prime}_{1}w^{\prime}_{2}\ldots w^{\prime}_{x})\cup\{x+1\},
$$
and similarly
$$
\textsf{PLrmax}(u)=\textsf{PLrmax}(u^{\prime}_{1}u^{\prime}_{2}\ldots u^{\prime}_{x})\cup\{x+1\}.
$$
By the induction hypothesis,
we have $$\textsf{PLrmax}(w^{\prime}_{1}w^{\prime}_{2}\ldots w^{\prime}_{n^{\prime}})=\textsf{PLrmax}(u^{\prime}_{1}u^{\prime}_{2}\ldots u^{\prime}_{n^{\prime}}).$$
Intersecting both sides with $[x]$, it follows that
$$\textsf{PLrmax}(w^{\prime}_{1}w^{\prime}_{2}\ldots w^{\prime}_{x})=\textsf{PLrmax}(u^{\prime}_{1}u^{\prime}_{2}\ldots u^{\prime}_{x}).$$
Therefore, $$\textsf{PLrmax}(w)=\textsf{PLrmax}(u),$$
as desired.
\end{proof}

\section{Application: alternating permutations}\label{Section-Alternating permutations}
A permutation $\pi=\pi_{1}\pi_{2}\ldots\pi_{n}\in\mathfrak{S}_{n}$ is called \emph{alternating} if
$$\pi_{1}>\pi_{2}<\pi_{3}>\pi_{4}<\cdots,$$
equivalently, $$\textsf{Des}(\pi)=\{1,3,5,\ldots\}\cap[n-1],$$
Similarly, $\pi$ is called \emph{reverse alternating} if
$$\pi_{1}<\pi_{2}>\pi_{3}<\pi_{4}>\cdots,$$
equivalently, $$\textsf{Des}(\pi)=\{2,4,6,\ldots\}\cap[n-1].$$
We write $\text{Alt}_{n}$ (resp. $\text{Ralt}_{n}$) for the set of alternating (resp. reverse alternating) permutations in $\mathfrak{S}_{n}$.
For more information on alternating permutations, see Stanley's survey \cite{Stanley-2010}.
It is well known that the numbers of both alternating and reverse alternating permutations in $\mathfrak{S}_{n}$
are given by the \emph{Euler number} $E_{n}$.
The Euler numbers are not to be confused with the Eulerian numbers, which count permutations by number of descents.
The following fundamental enumerative property of alternating permutations is due to Andr\'{e} \cite{Andre-1879}:
\begin{align}\label{eq-sec-tan}
	\sum_{n\geqslant0}E_{n}\frac{x^{n}}{n!}=\sec{x}+\tan{x},
\end{align} 
where $E_{0}=1$.

The \emph{$q$-Euler polynomials} $E_{n}(q)$ and $E_{n}^{\ast}(q)$
are defined by 
\begin{align}\label{eq-q-Euler-numbers}
	E_{n}(q)=\sum_{\pi\in\text{Ralt}_{n}}q^{\textsf{inv}(\pi)}\text{~~and~~}
	E_{n}^{\ast}(q)=\sum_{\pi\in\text{Alt}_{n}}q^{\textsf{inv}(\pi)}.
\end{align} 
The $q$-analogues of (\ref{eq-sec-tan}), expressed in terms of $E_{n}(q)$ and $E_{n}^{\ast}(q)$, 
can be found in \mbox{\cite[Theorem 2.1]{Stanley-2010}}.

As noted by Stanley \cite{Stanley-2010}, \textsf{inv} and \textsf{imaj} are equidistributed on $\text{Alt}_{n}$ (as well as on $\text{Ralt}_{n}$).
We now exhibit further statistics sharing this distribution.
By setting $S=\{1,3,5,\ldots\}\cap[n-1]$ in (\ref{Thm-Mahonian-permutation-fix-Des-eq-2})
and invoking (\ref{eq-q-Euler-numbers}), 
we obtain 
\begin{align*}
	\sum_{\pi\in\text{Alt}_{n}}q^{\textsf{inv}(\pi)}
	=\sum_{\pi\in\text{Alt}_{n}}q^{\textsf{imaj}(\pi)}
	=\sum_{\pi\in\text{Alt}_{n}}q^{\textsf{imak}(\pi)}
	=\sum_{\pi\in\text{Alt}_{n}}q^{\textsf{iinv}_{r}(\pi)}
	=\sum_{\pi\in\text{Alt}_{n}}q^{\textsf{istat}(\pi)}
	=E_{n}^{\ast}(q).
\end{align*}
Similarly, setting $S=\{2,4,6,\ldots\}\cap[n-1]$  in (\ref{Thm-Mahonian-permutation-fix-Des-eq-2}) and invoking (\ref{eq-q-Euler-numbers}), we obtain
\begin{align*}
	\sum_{\pi\in\text{Ralt}_{n}}q^{\textsf{inv}(\pi)}
	=\sum_{\pi\in\text{Ralt}_{n}}q^{\textsf{imaj}(\pi)}
	=\sum_{\pi\in\text{Ralt}_{n}}q^{\textsf{imak}(\pi)}
	=\sum_{\pi\in\text{Ralt}_{n}}q^{\textsf{iinv}_{r}(\pi)}
	=\sum_{\pi\in\text{Ralt}_{n}}q^{\textsf{istat}(\pi)}
	=E_{n}(q).
\end{align*}	

In what follows, we refine the above equidistribution result for the statistics
\textsf{inv}, \textsf{imaj}, \textsf{imak}, and $\textsf{iinv}_{r}$. 
Given $P\subseteq[n]$, we denote
\begin{align*} 
	\text{Alt}_{n,P}
	=\{\pi\in\text{Alt}_{n} : \textsf{PLrmax}(\pi)=P\}\text{~and~}
	\text{Ralt}_{n,P}
    =\{\pi\in\text{Ralt}_{n} : \textsf{PLrmax}(\pi)=P\}.
\end{align*}
By setting $S=\{1,3,5,\ldots\}\cap[n-1]$  in (\ref{eq-Thm-Mahonian-permutation-fix-Des-PLrmax-2}), 
we obtain 
\begin{align}\label{eq-Alt-PLrmax}
\sum_{\pi\in\text{Alt}_{n,P}}q^{\textsf{inv}(\pi)}
=\sum_{\pi\in\text{Alt}_{n,P}}q^{\textsf{imaj}(\pi)}
=\sum_{\pi\in\text{Alt}_{n,P}}q^{\textsf{imak}(\pi)}
=\sum_{\pi\in\text{Alt}_{n,P}}q^{\textsf{iinv}_{r}(\pi)}.
\end{align}
Similarly, 
by setting $S=\{2,4,6,\ldots\}\cap[n-1]$ in (\ref{eq-Thm-Mahonian-permutation-fix-Des-PLrmax-2}),  
we obtain
\begin{align}\label{eq-Ralt-PLrmax}
\sum_{\pi\in\text{Ralt}_{n,P}}q^{\textsf{inv}(\pi)}
=\sum_{\pi\in\text{Ralt}_{n,P}}q^{\textsf{imaj}(\pi)}
=\sum_{\pi\in\text{Ralt}_{n,P}}q^{\textsf{imak}(\pi)}
=\sum_{\pi\in\text{Ralt}_{n,P}}q^{\textsf{iinv}_{r}(\pi)}.
\end{align}

As an immediate consequence of (\ref{eq-Alt-PLrmax}) and (\ref{eq-Ralt-PLrmax}), 
we obtain the following equidistributed Mahonian--Stirling-type pairs for alternating and reverse alternating permutations:
\begin{equation} \label{eq-Alt-Mahonian--Stirling}
	\sum_{\pi\in\text{Alt}_{n}}q^{\textsf{inv}(\pi)}x^{\textsf{lrmax}(\pi)}
	=\sum_{\pi\in\text{Alt}_{n}}q^{\textsf{imaj}(\pi)}x^{\textsf{lrmax}(\pi)}
	=\sum_{\pi\in\text{Alt}_{n}}q^{\textsf{imak}(\pi)}x^{\textsf{lrmax}(\pi)}
	=\sum_{\pi\in\text{Alt}_{n}}q^{\textsf{iinv}_{r}(\pi)}x^{\textsf{lrmax}(\pi)}
\end{equation}	
and
\begin{equation} \label{eq-Ralt-Mahonian--Stirling}
	\sum_{\pi\in\text{Ralt}_{n}}q^{\textsf{inv}(\pi)}x^{\textsf{lrmax}(\pi)}
	=\sum_{\pi\in\text{Ralt}_{n}}q^{\textsf{imaj}(\pi)}x^{\textsf{lrmax}(\pi)}
	=\sum_{\pi\in\text{Ralt}_{n}}q^{\textsf{imak}(\pi)}x^{\textsf{lrmax}(\pi)}
	=\sum_{\pi\in\text{Ralt}_{n}}q^{\textsf{iinv}_{r}(\pi)}x^{\textsf{lrmax}(\pi)}.
\end{equation}	

\begin{remark}
For results on the distribution of $\textsf{inv}$ on $\text{Alt}_{n}$ and $\text{Ralt}_{n}$,
see \cite[Theorem 2.1]{Stanley-2010}. 
For results on the distribution of $\textsf{lrmax}$ on $\text{Alt}_{n}$ and $\text{Ralt}_{n}$,
see Carlitz and Scoville \cite{Carlitz-Scoville-1975} and 
Han, Kitaev, and Zhang \cite{Han-Kitaev-Zhang-2025}. 
In view of (\ref{eq-Alt-Mahonian--Stirling}) and (\ref{eq-Ralt-Mahonian--Stirling}),
it may be of interest to study the joint distribution of ${\textsf{inv}}$ and  ${\textsf{lrmax}}$ on $\text{Alt}_{n}$ and $\text{Ralt}_{n}$.	
\end{remark}

\section{Application: permutations with $k$ alternating runs}\label{Section-Permutations-with-k-runs}
Let $\pi=\pi_{1}\pi_{2}\ldots\pi_{n}\in\mathfrak{S}_{n}$.
We say that $\pi$ changes direction at position $i$ if either 
$\pi_{i-1}<\pi_{i}>\pi_{i+1}$
or 
$\pi_{i-1}>\pi_{i}<\pi_{i+1}$,
where $2\leqslant i\leqslant n-1$.
We say that $\pi$ has \emph{$k$ alternating runs} if there are exactly $k-1$ positions at which $\pi$ changes direction. 
Let $\mathcal{R}(n,k)$ denote the set of permutations in $\mathfrak{S}_{n}$ with $k$ alternating runs,
and let $r(n,k)=|\mathcal{R}(n,k)|$.
A substantial literature is devoted to the numbers $r(n,k)$; 
see, e.g., 
Andr\'{e} \cite{Andre-1884},
B\'{o}na--Ehrenborg \cite{Bona-2000},
Canfield--Wilf \cite{Canfield-2008},
Carlitz \cite{Carlitz-1978,Carlitz-1980,Carlitz-1981},
Ma \cite{Ma-2012},
Stanley \cite{Stanley-2008},
and Wilf \cite{Wilf-1998}.
In this section,
we investigate Mahonian statistics on the set $\mathcal{R}(n,k)$.

If $i\leqslant j$,
we denote by $[i,j]$ the set $\{i,i+1,\ldots,j\}$.
Let $S\subseteq[n-1]$ and let $i\leqslant j$.
If $[i,j]\subseteq S$ and $i-1,j+1\notin S$,
we say that $[i,j]$ is a \emph{maximal consecutive segment} of $S$.
Let $\textsf{mcs}(S)$ denote the number of \textbf{m}aximal  \textbf{c}onsecutive  \textbf{s}egments of $S$.
For example, $\textsf{mcs}(\{\underline{1,2,3},\underline{5,6},\underline{8,9,10}\})=3$.

For even $k$, let 
$$\mathcal{S}^{\text{even}}_{n,k}=\{S\subseteq[n-1]:\textsf{mcs}(S)=k/2,
\text{~exactly~one~of~}1\text{~and~}n-1\text{~lies~in~}S\}.$$
For odd $k$, let  $\mathcal{S}^{\text{odd}}_{n,k}=\mathcal{S}^{\text{odd}}_{n,k,1}\cup\mathcal{S}^{\text{odd}}_{n,k,2}$, where
\begin{align*} 
	\mathcal{S}^{\text{odd}}_{n,k,1}&=\{S\subseteq[n-1]:\textsf{mcs}(S)=(k-1)/2,~
	1\notin S,~n-1\notin S\},\\
	\mathcal{S}^{\text{odd}}_{n,k,2}&=\{S\subseteq[n-1]:\textsf{mcs}(S)=(k+1)/2,~
	1\in S,~n-1\in S\}.
\end{align*}
One checks that
\begin{align} \label{eq-runs}
	\mathcal{R}(n,k)=\left\{
	\begin{aligned}
		&\underset{S\in\mathcal{S}^{\text{even}}_{n,k}}\Uplusmed	
		\mathcal{D}_{n}^{\smalleq}(S),~~\text{if~}k\text{~is~even},\\
		&\underset{S\in\mathcal{S}^{\text{odd}}_{n,k}}\Uplusmed	
		\mathcal{D}_{n}^{\smalleq}(S),\!~~~\text{if~}k\text{~is~odd}.
	\end{aligned}
	\right.
\end{align}

Combining (\ref{eq-runs}) with (\ref{Thm-Mahonian-permutation-fix-Des-eq-2}),
we obtain the following five equidistributed Mahonian statistics on $\mathcal{R}(n,k)$:
\begin{align*}
     \sum_{\pi\in\mathcal{R}(n,k)}q^{\textsf{inv}(\pi)}
	=\sum_{\pi\in\mathcal{R}(n,k)}q^{\textsf{imaj}(\pi)}
	=\sum_{\pi\in\mathcal{R}(n,k)}q^{\textsf{imak}(\pi)}
	=\sum_{\pi\in\mathcal{R}(n,k)}q^{\textsf{iinv}_{r}(\pi)}
	=\sum_{\pi\in\mathcal{R}(n,k)}q^{\textsf{istat}(\pi)}.
\end{align*}

Let
$$\mathcal{R}(n,k;P)=\{\pi\in\mathcal{R}(n,k):\textsf{PLrmax}(\pi)=P\}.$$ 
By (\ref{eq-runs}), we have
\begin{align*}  
	\mathcal{R}(n,k;P)=\left\{
	\begin{aligned}
		&\underset{S\in\mathcal{S}^{\text{even}}_{n,k}}\Uplusmed	
		\mathcal{D}_{n,P}^{\smalleq}(S),~~~\text{if~}k\text{~is~even},\\
		&\underset{S\in\mathcal{S}^{\text{odd}}_{n,k}}\Uplusmed	
		\mathcal{D}_{n,P}^{\smalleq}(S),\!~~~~\text{if~}k\text{~is~odd}.
	\end{aligned}
	\right.
\end{align*}
Combining this with (\ref{eq-Thm-Mahonian-permutation-fix-Des-PLrmax-2}),
we obtain 
\begin{align} \label{eq-k-runs-PLrmax}
	\sum_{\pi\in\mathcal{R}(n,k;P)}q^{\textsf{inv}(\pi)}
	=\sum_{\pi\in\mathcal{R}(n,k;P)}q^{\textsf{imaj}(\pi)}
	=\sum_{\pi\in\mathcal{R}(n,k;P)}q^{\textsf{imak}(\pi)}
	=\sum_{\pi\in\mathcal{R}(n,k;P)}q^{\textsf{iinv}_{r}(\pi)}.
\end{align}

The following consequence of (\ref{eq-k-runs-PLrmax}) gives four equidistributed Mahonian--Stirling-type pairs on $\mathcal{R}(n,k)$:
\begin{align*} 
	\sum_{\pi\in\mathcal{R}(n,k)}q^{\textsf{inv}(\pi)}x^{\textsf{lrmax}(\pi)}
	=\sum_{\pi\in\mathcal{R}(n,k)}q^{\textsf{imaj}(\pi)}x^{\textsf{lrmax}(\pi)}
	=\sum_{\pi\in\mathcal{R}(n,k)}q^{\textsf{imak}(\pi)}x^{\textsf{lrmax}(\pi)}
	=\sum_{\pi\in\mathcal{R}(n,k)}q^{\textsf{iinv}_{r}(\pi)}x^{\textsf{lrmax}(\pi)}.
\end{align*}

\section{Equidistribution of \textsf{inv} and \textsf{inv}$_{r}$  on $\mathfrak{S}_{\alpha,R}$}\label{section-inv-majd}
Throughout this section and the following two sections, we assume that $r$ is a positive integer.
Kadell \cite{Kadell-1985} introduced a family of Mahonian statistics
$\textsf{inv}_{r}$ that interpolates between \textsf{maj} and \textsf{inv}.
Given $w=w_{1}w_{2}\ldots w_{n}\in\mathfrak{S}_{\alpha}$,
define
$$
\textsf{inv}_{r}(w)=|\{(i,j):i<j<i+r,~w_{i}>w_{j}\}|+\sum_{
w_{i}>w_{i+r}}i.
$$
Clearly,
$\textsf{inv}_{1}=\textsf{maj}$ and  $\textsf{inv}_{r}=\textsf{inv}$ for $r\geqslant n-1$.

Let $w\in\mathfrak{S}_{\alpha}$ and let $\text{std}(w)=\tau$.
By Lemma \ref{lemma-std}, for $i<j$, 
we have $\tau_{i}>\tau_{j}$ if and only if $w_{i}>w_{j}$.
Therefore, 
\begin{align}\label{eq-std-maj_d}
\textsf{inv}_{r}(\text{std}(w))=\textsf{inv}_{r}(w).
\end{align}

Foata \cite{Foata-1968} introduced a well-known bijection on $\mathfrak{S}_{\alpha}$ sending \textsf{maj}  to \textsf{inv}.
Kadell \cite{Kadell-1985} constructed a bijection  sending \textsf{inv} to \textsf{inv}$_{r}$.
Liang \cite{Liang-1989} later gave a bijection sending \textsf{inv}$_{r}$ to \textsf{inv}, 
which extends Foata's bijection and is the inverse of Kadell's bijection.
We use Liang's bijection to establish the equidistribution of \textsf{inv} and \textsf{inv}$_{r}$  on $\mathfrak{S}_{\alpha,R}$.

Given a non-empty word $w=w_{1}w_{2}\ldots w_{n}$ and a letter $x$,
define an operator $J_{x}$ acting on $w$ as follows.
\begin{itemize}
	\item[$\bullet$]If $w_{n}>x$,
	write $w=u_{1}b_{1}u_{2}b_{2}\ldots u_{s}b_{s}$,
	where each $b_{j}$ is a letter greater than $x$,
	and each $u_{j}$ is a word (possibly empty),
	all of whose letters are less than or equal to $x$.
	
	\item[$\bullet$]If $w_{n}\leqslant x$,
	write $w=u_{1}b_{1}u_{2}b_{2}\ldots u_{s}b_{s}$,
	where each $b_{j}$ is a letter less than or equal to $x$,
	and each $u_{j}$ is a word (possibly empty),
	all of whose letters are greater than $x$.
\end{itemize}	
In either case,  define 
$$J_{x}(w)=b_{1}u_{1}b_{2}u_{2}\ldots b_{s}u_{s}.$$

Let $w=w_{1}w_{2}\ldots w_{n}\in\mathfrak{S}_{\alpha}$.
If $r\geqslant n$, define $F_{r}(w)=w$.
In what follows, we assume that $r<n$.
We recursively define words $\gamma_{1},\gamma_{2},\ldots,\gamma_{n}$,
where $\gamma_{i}$ is a rearrangement of $w_{1}w_{2}\ldots w_{i}$.
First define $\gamma_{i}=w_{1}w_{2}\ldots w_{i}$ for $1\leqslant i\leqslant r$.
Assume that 
$$\gamma_{i}=a_{1}a_{2}\ldots a_{i-r+1}w_{i-r+2}w_{i-r+3}\ldots w_{i}$$ 
has been defined for some $i$ with $r\leqslant i\leqslant n-1$,
where $a_{1}a_{2}\ldots a_{i-r+1}$ is a rearrangement of $w_{1}w_{2}\ldots w_{i-r+1}$.
Define
\begin{align}\label{Eq-Foata-bijection-eq-d-extension}
	\gamma_{i+1}=J_{w_{i+1}}(a_{1}a_{2}\ldots a_{i-r+1})w_{i-r+2}w_{i-r+3}\ldots w_{i}w_{i+1}.
\end{align}
Finally, set $F_{r}(w)=\gamma_{n}$.
When $r=1$,  
$F_{r}$ coincides with Foata's bijection.

\begin{theorem}[\cite{Liang-1989}]\label{Thm-F_d-bijection}
For all $r\geqslant1$, $F_{r}:\mathfrak{S}_{\alpha}\rightarrow\mathfrak{S}_{\alpha}$ is a bijection,
and for all $w\in\mathfrak{S}_{\alpha}$,
$$\emph{\textsf{inv}}_{r}(w)=\emph{\textsf{inv}}(F_{r}(w)).$$ 
\end{theorem}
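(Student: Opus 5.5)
The plan is to prove both assertions by following the recursive construction of $F_{r}$ step by step. Bijectivity will come from inverting each step. The identity $\textsf{inv}_{r}(w)=\textsf{inv}(F_{r}(w))$ will come from showing that each step increases \textsf{inv} by exactly the amount that appending $w_{i+1}$ increases $\textsf{inv}_{r}$. For $r\geqslant n$ there is nothing to prove, since then $F_{r}$ is the identity and $\textsf{inv}_{r}=\textsf{inv}$. So assume $r<n$, and for $r\leqslant i\leqslant n-1$ write $\gamma_{i}=a\,w_{i-r+2}\ldots w_{i}$ with $a=a_{1}\ldots a_{i-r+1}$.

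First I record a structural invariant: the last letter of $a$ is $w_{i-r+1}$. For $i=r$ this holds because $\gamma_{r}=w_{1}\ldots w_{r}$. For the inductive step, the prefix of length $i-r+2$ of $\gamma_{i+1}$ is $J_{w_{i+1}}(a)\,w_{i-r+2}$, which ends in $w_{i-r+2}$.

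The same decomposition gives a second invariant. In the factorization $a=u_{1}b_{1}\ldots u_{s}b_{s}$ used by $J_{x}$, the word ends with the letter $b_{s}$. Hence $J_{x}(a)=b_{1}u_{1}\ldots b_{s}u_{s}$ begins with $b_{1}$, and $b_{1}$ lies on the same side of $x$ as the last letter of $a$.

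Next comes the inversion count. Fix $x=w_{i+1}$ and let $N$ be the number of letters of $a$ greater than $x$. Only three things change in passing from $\gamma_{i}$ to $\gamma_{i+1}$.

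\begin{enumerate}
\item Inversions between the prefix and the suffix $w_{i-r+2}\ldots w_{i}$ are unchanged, since the prefix is only rearranged as a multiset.
\item Within the prefix, suppose first that $a_{i-r+1}=w_{i-r+1}>x$. Each block $u_{j}b_{j}$ becomes $b_{j}u_{j}$ with $b_{j}>x\geqslant u_{j}$, so the prefix gains $\sum_{j}|u_{j}|=(i-r+1)-N$ inversions. Suppose instead that $w_{i-r+1}\leqslant x$. Each block now has $b_{j}\leqslant x<u_{j}$, so the prefix loses exactly $N$ inversions.
\item Appending $x$ at the end creates $N+|\{j\in[i-r+2,i]:w_{j}>x\}|$ new inversions.
\end{enumerate}

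Adding these up, $\textsf{inv}(\gamma_{i+1})-\textsf{inv}(\gamma_{i})$ equals
\[
|\{j:i-r+2\leqslant j\leqslant i,\ w_{j}>w_{i+1}\}|+(i-r+1)\,[\,w_{i-r+1}>w_{i+1}\,].
\]
This is precisely the increment of $\textsf{inv}_{r}$ when $w_{i+1}$ is appended to $w_{1}\ldots w_{i}$. For the base case, $\textsf{inv}(\gamma_{r})=\textsf{inv}(w_{1}\ldots w_{r})=\textsf{inv}_{r}(w_{1}\ldots w_{r})$, since no pair $(i,i+r)$ fits inside a word of length $r$. Induction on $i$ then gives $\textsf{inv}(F_{r}(w))=\textsf{inv}_{r}(w)$.

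For bijectivity I will reconstruct $\gamma_{i}$ from $\gamma_{i+1}$. The letter $w_{i+1}$ is the last letter of $\gamma_{i+1}$, and $w_{i-r+2}\ldots w_{i}$ are the preceding $r-1$ letters. The remaining prefix is $J_{x}(a)$ with $x=w_{i+1}$. By the second invariant, its first letter tells us whether $a$ ended with a letter $>x$ or $\leqslant x$. In the first case, cut $J_{x}(a)$ just before each letter $>x$; in the second case, cut just before each letter $\leqslant x$. Then move each leading letter to the end of its block, which recovers $a$. Running this backward from $\gamma_{n}$ to $\gamma_{r}=w_{1}\ldots w_{r}$ yields an inverse map on $\mathfrak{S}_{\alpha}$.

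I expect the main difficulty to be bookkeeping rather than any single deep step. The crucial point is confirming that the case distinction in $J_{x}$ is governed by $w_{i-r+1}$, via the invariant above. That is exactly what produces the term $(i-r+1)[w_{i-r+1}>w_{i+1}]$, and it also makes the inverse well defined.
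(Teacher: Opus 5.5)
Your proof is correct. Note that the paper gives no proof of this statement: it is quoted from Liang~\cite{Liang-1989}. Your argument is therefore a self-contained replacement for the citation rather than a rival route, and it is the natural one, namely Foata's classical step-by-step inversion count for $r=1$ carried over to general $r$.

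The key observation is that the prefix $a$ of $\gamma_i$ always ends in $w_{i-r+1}$. This makes the case split inside $J_{w_{i+1}}$ coincide with the comparison of $w_{i-r+1}$ and $w_{i+1}$, which governs the gap-$r$ term of $\textsf{inv}_r$. Your three contributions then sum to exactly the increment of $\textsf{inv}_r$:
\begin{itemize}
\item cross terms between prefix and suffix are unchanged;
\item the prefix gains $i-r+1-N$ inversions or loses $N$;
\item the appended letter adds $N$ plus the suffix count.
\end{itemize}
The base case $\gamma_r=w_1\ldots w_r$ is also right, since no gap-$r$ pair fits in a word of length $r$.

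For bijectivity, your backward reconstruction shows injectivity, which suffices because $\mathfrak{S}_\alpha$ is finite. You could also say directly that $J_x$ is a bijection on the rearrangements of a fixed multiset. It exchanges words that end in a letter $>x$ (resp.\ $\leqslant x$) with words that begin with one.

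Your proof and the paper's proof of Proposition~\ref{Prop-Foata-d--keeps-Rlwmin} are the same kind of induction over the recursion $\gamma_i\mapsto\gamma_{i+1}$. One tracks $\textsf{inv}$ and the other tracks $\textsf{Rlwmin}\cap\lceil w_{i+1}\rceil$. Together they make the equidistribution of $\textsf{inv}$ and $\textsf{inv}_r$ on $\mathfrak{S}_{\alpha,R}$ fully self-contained.
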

Before proving that Liang’s bijection preserves the multiset of weak right-to-left minima, 
we present two lemmas. 
Given a multiset $M$ and a positive integer $x$, 
let $M\cap\lceil x\rceil$ denote the submultiset of $M$ consisting of elements not exceeding $x$. 
For example,
\begin{align*}
&\textsf{Rlwmin}(3212315354646547577)=\{1,1,3,4,4,4,5,7,7\},\text{~and~}\\
&\textsf{Rlwmin}(3212315354646547577)\cap \lceil4\rceil =\{1,1,3,4,4,4\}
\end{align*}
\begin{lemma}\label{lemma-Rlwmin}
	Let $w_{1}w_{2}\ldots w_{i}w_{i+1}$ be a sequence of positive integers.
	Then
	\begin{align*}
		\emph{\textsf{Rlwmin}}(w_{1}w_{2}\ldots w_{i}w_{i+1})=\{w_{i+1}\}\cup
		\left(\emph{\textsf{Rlwmin}}(w_{1}w_{2}\ldots w_{i})\cap \lceil w_{i+1}\rceil \right),
	\end{align*}
	where $\cup$ denotes multiset union.
\end{lemma}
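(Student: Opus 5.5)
We prove Lemma \ref{lemma-Rlwmin} by checking, position by position, which letters of $w_{1}w_{2}\ldots w_{i}w_{i+1}$ are weak right-to-left minima, and comparing with the corresponding status in the prefix $w_{1}w_{2}\ldots w_{i}$. The last letter $w_{i+1}$ is a weak right-to-left minimum by definition, which gives the singleton $\{w_{i+1}\}$. Since the statement is a multiset identity, we work with positions rather than values: for each $j\leqslant i$ we decide whether position $j$ contributes to the left-hand side, and show that this happens exactly when position $j$ contributes to $\textsf{Rlwmin}(w_{1}\ldots w_{i})$ and $w_{j}\leqslant w_{i+1}$. This gives a bijection of contributing positions that preserves the letters, hence equality of multisets.

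Fix $j\leqslant i$. In the longer word, $w_{j}$ is a weak right-to-left minimum if and only if $w_{j}\leqslant w_{k}$ for all $k$ with $j<k\leqslant i+1$. This is the conjunction of two conditions:
\begin{enumerate}[label=(\roman*)]
\item $w_{j}\leqslant w_{k}$ for all $j<k\leqslant i$;
\item $w_{j}\leqslant w_{i+1}$.
\end{enumerate}
Condition (i) says precisely that $w_{j}$ is a weak right-to-left minimum of $w_{1}\ldots w_{i}$. This holds also for $j=i$, where (i) is vacuous and $w_{i}$ is the last letter of the prefix. Condition (ii) says that the letter $w_{j}$ survives the truncation $\cap\lceil w_{i+1}\rceil$.

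The only point needing care is the interpretation of $M\cap\lceil x\rceil$ on multisets. We must make sure that truncating the multiset $\textsf{Rlwmin}(w_{1}\ldots w_{i})$ by value removes exactly the copies coming from positions with $w_{j}>w_{i+1}$ and keeps all the others. Since the truncation is defined by value (keep every element $\leqslant x$, with full multiplicity), this is immediate. So the main "obstacle" is only bookkeeping of multiplicities, and it is handled by the position-wise argument above.

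Summing over $j$ yields
\[
\textsf{Rlwmin}(w_{1}\ldots w_{i+1})=\{w_{i+1}\}\cup\bigl(\textsf{Rlwmin}(w_{1}\ldots w_{i})\cap\lceil w_{i+1}\rceil\bigr).
\]
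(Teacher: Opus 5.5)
Your proof is correct and follows essentially the same route as the paper: both work position by position and split the condition $w_j\leqslant\min\{w_{j+1},\ldots,w_{i+1}\}$ into ``$w_j$ is a weak right-to-left minimum of the prefix'' and ``$w_j\leqslant w_{i+1}$.'' Your version is slightly more careful than the paper's, since you explicitly handle the vacuous case $j=i$ and explain why the value truncation $\cap\lceil w_{i+1}\rceil$ matches the position-wise count of multiplicities.
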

\begin{proof}
	By definition, a position $t$ contributes $w_t$ to $\mathsf{Rlwmin}(w_{1}w_{2}\ldots w_{i}w_{i+1})$
	if and only if either
	
	(1) $t=i+1$, or

    (2) $t\leqslant i$ and $w_{t}\leqslant \min(w_{t+1}w_{t+2}\ldots w_{i}w_{i+1})$.	
	
\noindent	In case (1), we obtain the contribution $\{w_{i+1}\}$.
	In case (2), using
	\[
	\min\{w_{t+1}, \ldots, w_i, w_{i+1}\}
	=
	\min\bigl(\min\{w_{t+1}, \ldots, w_i\},\, w_{i+1}\bigr),
	\]
	we see that (2) is equivalent to requiring
	\[
	w_t \leqslant \min\{w_{t+1}, \ldots, w_i\}
	\quad\text{and}\quad
	w_t \leqslant w_{i+1}.
	\]
	Thus, the position $t$ contributes $w_t$ to $\mathsf{Rlwmin}(w_1 w_2 \cdots w_i)\cap \lceil w_{i+1}\rceil$.
	Therefore,
	\[
	\mathsf{Rlwmin}(w_1 \cdots w_i w_{i+1})
	=
	\{w_{i+1}\}
	\cup
	\bigl(
	\mathsf{Rlwmin}(w_1 \cdots w_i)
	\cap
	\lceil w_{i+1}\rceil
	\bigr),
	\]
	completing the proof.
\end{proof}
\begin{lemma}\label{lemma-J_{x}}
	Let $w$ be a sequence of positive integers, and
	let $x$ be a positive integer. 
	We have
	\begin{align}\label{eq-lemma-J_{x}}
		\emph{\textsf{Rlwmin}}(w)\cap\lceil x\rceil =\emph{\textsf{Rlwmin}}(J_{x}(w))\cap\lceil x\rceil.
	\end{align}
\end{lemma}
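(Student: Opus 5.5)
The plan is to describe $\textsf{Rlwmin}(w)\cap\lceil x\rceil$ directly in terms of the ``small'' letters of $w$, namely those $\leqslant x$. The key observation is that a letter $w_t\leqslant x$ is a weak right-to-left minimum of $w$ if and only if $w_t\leqslant w_j$ for every later letter $w_j$ that is itself $\leqslant x$. Later letters exceeding $x$ are automatically $\geqslant w_t$, so they never obstruct. Writing $\sigma(w)$ for the subword of $w$ formed by the letters $\leqslant x$, we therefore get
\[
\textsf{Rlwmin}(w)\cap\lceil x\rceil=\textsf{Rlwmin}(\sigma(w)),
\]
with the convention that the right side is empty when $\sigma(w)$ is empty. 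The same identity holds for $J_{x}(w)$. Hence (\ref{eq-lemma-J_{x}}) reduces to the claim that $\sigma(J_{x}(w))=\sigma(w)$, i.e.\ that $J_{x}$ preserves the relative order of the letters $\leqslant x$.

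We verify this claim from the definition of $J_x$ in its two cases, writing $w=u_{1}b_{1}\ldots u_{s}b_{s}$ and $J_x(w)=b_{1}u_{1}\ldots b_{s}u_{s}$.

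\emph{Case $w_n>x$.} Here each $b_j$ is $>x$, and the small letters are exactly the letters of $u_1,u_2,\ldots,u_s$. Each $u_j$ stays intact, and the blocks remain in the order $u_1,\ldots,u_s$. So $\sigma(J_x(w))=u_1u_2\cdots u_s=\sigma(w)$.

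\emph{Case $w_n\leqslant x$.} Here each $b_j$ is $\leqslant x$, and each $u_j$ consists of letters $>x$. The small letters are $b_1,\ldots,b_s$, which appear in this order in both $w$ and $J_x(w)$. So $\sigma(J_x(w))=b_1\cdots b_s=\sigma(w)$.

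In both cases the claim holds, and the lemma follows. If $w$ contains no letter $\leqslant x$, both sides of (\ref{eq-lemma-J_{x}}) are empty; this falls under the first case with $s$ letters $b_j$ and empty $u_j$.

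The main point needing care is the reduction identity. We must check that the weak inequality in the definition of weak right-to-left minima causes no trouble. A later letter $w_j>x\geqslant w_t$ satisfies $w_t\leqslant w_j$, so it is harmless. For the last letter of $\sigma(w)$, the condition ``either $i=n$ or $w_i\leqslant w_j$ for all $j>i$'' is satisfied vacuously once the big letters are disregarded. Once this is in place, the case check above is routine.
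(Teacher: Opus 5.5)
Your proof is correct and takes the same route as the paper. The paper also splits into the two cases of the definition of $J_x$, notes that the letters $\leqslant x$ keep their relative order, and argues that letters $>x$ cannot stop a letter $\leqslant x$ from being a weak right-to-left minimum; your identity $\textsf{Rlwmin}(w)\cap\lceil x\rceil=\textsf{Rlwmin}(\sigma(w))$ simply states that last step explicitly.
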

\begin{proof}
	Assume that $w = w_1 w_2 \ldots w_n$ and $J_x(w)=v=v_1 v_2 \ldots v_n$.
	By the definition of $J_x$, we may write
	$$ 
	w=u_{1}b_{1}u_{2}b_{2}\ldots u_{s}b_{s} \text{~~and~~}
	v=b_{1}u_{1}b_{2}u_{2}\ldots b_{s}u_{s}.
    $$
	\noindent
	\textbf{Case 1:~}$w_n > x$.
	Then $b_j > x$ for all $j$, and every letter in each $u_j$ is at most $x$.
    Hence all letters of $w$ not exceeding $x$ occur in the subwords
	$u_1,u_2,\dots,u_s$, and their relative order is unchanged
	when passing from $w$ to $v$.
    Since every $b_j>x$, the letters $b_j$ do not affect whether a letter not exceeding $x$ is a weak right-to-left minimum.
    Therefore, for each occurrence of a letter not exceeding $x$, its contribution to $\mathsf{Rlwmin}(w)\cap\lceil x\rceil$
    is preserved when passing from $w$ to $v$.
	Thus,  (\ref{eq-lemma-J_{x}}) holds in this case.
	
	\medskip
	\noindent
	\textbf{Case 2:~}$w_n \leqslant x$.
	Then $b_j \leqslant x$ for all $j$, and every letter in each $u_j$ is strictly greater than $x$.
	Hence all letters of $w$ not exceeding $x$ are precisely
	$b_1,b_2,\dots,b_s$, and they occur in the same order
	in both $w$ and $v$.
	Since the subwords $u_j$ contain only letters greater than $x$,
	they do not influence whether a letter not exceeding $x$
	is a weak right-to-left minimum.
	Therefore, the same argument as in Case~1 shows that
    (\ref{eq-lemma-J_{x}}) holds in this case.
\end{proof}
\begin{proposition}\label{Prop-Foata-d--keeps-Rlwmin}
Let $r\geqslant1$. 
For all $w\in\mathfrak{S}_{\alpha}$,
we have $\emph{\textsf{Rlwmin}}(w)=\emph{\textsf{Rlwmin}}(F_{r}(w))$.
\end{proposition}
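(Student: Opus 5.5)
We prove by induction on $i$ that $\textsf{Rlwmin}(\gamma_{i})=\textsf{Rlwmin}(w_{1}w_{2}\ldots w_{i})$ for $1\leqslant i\leqslant n$. Taking $i=n$ then gives $\textsf{Rlwmin}(F_{r}(w))=\textsf{Rlwmin}(\gamma_{n})=\textsf{Rlwmin}(w)$. The case $r\geqslant n$ is trivial, since then $F_{r}(w)=w$, so we assume $r<n$. For $1\leqslant i\leqslant r$ we have $\gamma_{i}=w_{1}\ldots w_{i}$, which gives the base case.

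For the inductive step, suppose $r\leqslant i\leqslant n-1$ and $\gamma_{i}=a_{1}\ldots a_{i-r+1}w_{i-r+2}\ldots w_{i}$ satisfies the claim. Write $x=w_{i+1}$, $A=a_{1}\ldots a_{i-r+1}$, and let $C=w_{i-r+2}\ldots w_{i}$ be the fixed suffix, which may be empty when $r=1$. Then $\gamma_{i+1}=J_{x}(A)\,C\,x$.

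By Lemma \ref{lemma-Rlwmin},
\[
\textsf{Rlwmin}(\gamma_{i+1})=\{x\}\cup\bigl(\textsf{Rlwmin}(J_{x}(A)\,C)\cap\lceil x\rceil\bigr),
\]
and likewise
\[
\textsf{Rlwmin}(w_{1}\ldots w_{i+1})=\{x\}\cup\bigl(\textsf{Rlwmin}(w_{1}\ldots w_{i})\cap\lceil x\rceil\bigr)=\{x\}\cup\bigl(\textsf{Rlwmin}(A\,C)\cap\lceil x\rceil\bigr),
\]
where the last equality uses the induction hypothesis $\textsf{Rlwmin}(w_{1}\ldots w_{i})=\textsf{Rlwmin}(\gamma_{i})=\textsf{Rlwmin}(AC)$. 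It therefore suffices to show
\[
\textsf{Rlwmin}(J_{x}(A)\,C)\cap\lceil x\rceil=\textsf{Rlwmin}(A\,C)\cap\lceil x\rceil .
\]

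This is the main point, because Lemma \ref{lemma-J_{x}} applies to $A$ alone and not to $A$ followed by a suffix. We extend it by splitting contributions by position. Letters in $C$ contribute identically on both sides, since whether they are weak right-to-left minima depends only on what lies to their right, and that is $C$ in both words.

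A letter $y\leqslant x$ in the prefix ($A$ or $J_{x}(A)$) is a weak right-to-left minimum of the whole word exactly when it is a weak right-to-left minimum of the prefix and $y\leqslant\min C$, with $\min\varnothing=\infty$. So the prefix contributes
\[
\bigl(\textsf{Rlwmin}(\text{prefix})\cap\lceil x\rceil\bigr)\cap\lceil \min C\rceil ,
\]
where $\cap\lceil\infty\rceil$ means no restriction; this is the same argument as in the proof of Lemma \ref{lemma-Rlwmin}. By Lemma \ref{lemma-J_{x}}, $\textsf{Rlwmin}(J_{x}(A))\cap\lceil x\rceil=\textsf{Rlwmin}(A)\cap\lceil x\rceil$, so the prefix contributions agree after intersecting with $\lceil\min C\rceil$.

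Putting the prefix and suffix contributions together gives the displayed equality and completes the induction.
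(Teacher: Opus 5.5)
Your proof is correct and follows the paper's argument essentially step for step. Both use the same induction on $i$ with $\gamma_{i+1}=J_{w_{i+1}}(A)\,C\,w_{i+1}$, apply Lemma \ref{lemma-J_{x}} to the prefix, and close with Lemma \ref{lemma-Rlwmin} and the induction hypothesis. Your description of the prefix contribution as $\bigl(\textsf{Rlwmin}(\text{prefix})\cap\lceil x\rceil\bigr)\cap\lceil \min C\rceil$ simply makes precise the paper's brief remark that appending the common suffix $w_{i-r+2}\ldots w_i$ preserves or removes each contribution on both sides alike.
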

\begin{proof}
	If $r\geqslant n$, then $F_r(w)=w$ by definition, and the result is immediate.
	Hence we may assume that $r<n$.
	We claim that for each $i$ with $1\leqslant i\leqslant n$, 
	we have $$\textsf{Rlwmin}(\gamma_{i})=\textsf{Rlwmin}(w_{1}w_{2}\ldots w_{i}).$$
	We use induction on $i$ to prove our claim.
	The cases $1\leqslant i \leqslant r$ are straightforward.
    Assume that the claim holds for some $i$ with $r\leqslant i \leqslant n-1$. We show that it also holds for $i+1$.
    Let $w=w_{1}w_{2}\ldots w_{n}\in\mathfrak{S}_{\alpha}$.
	Write 
	$$\gamma_{i}=a_{1}a_{2}\ldots a_{i-r+1}w_{i-r+2}w_{i-r+3}\ldots w_{i}.$$
	Denote 
	$$\beta_{i}=J_{w_{i+1}}(a_{1}a_{2}\ldots a_{i-r+1})w_{i-r+2}w_{i-r+3}\ldots w_{i}.$$ 
	Thus, $\gamma_{i+1}=\beta_{i}w_{i+1}$. 
	By Lemma \ref{lemma-J_{x}}, we have
	\begin{align}\label{eq:Rlwmin-cap-pre}
    \textsf{Rlwmin}(a_{1}a_{2}\ldots a_{i-r+1})\cap\lceil w_{i+1}\rceil=
	\textsf{Rlwmin}(J_{w_{i+1}}(a_{1}a_{2}\ldots a_{i-r+1}))\cap\lceil w_{i+1}\rceil.	 
	\end{align} 
	Consequently,
	\vspace{-4mm}
\begin{equation}\label{eq:Rlwmin-cap}
	\begin{aligned}
		&\textsf{Rlwmin}(a_{1}a_{2}\ldots a_{i-r+1}w_{i-r+2}w_{i-r+3}\ldots w_{i})
		\cap \lceil w_{i+1}\rceil \\
		={}& \textsf{Rlwmin}(
		J_{w_{i+1}}(a_{1}a_{2}\ldots a_{i-r+1})\,w_{i-r+2}w_{i-r+3}\ldots w_{i}
		)
		\cap \lceil w_{i+1}\rceil .
	\end{aligned}
\end{equation}
Indeed, after appending the common suffix
$w_{i-r+2}w_{i-r+3}\ldots w_i$,
each contribution appearing on both sides of (\ref{eq:Rlwmin-cap-pre}) is either preserved on both sides or removed on both sides, while the contributions coming from the common suffix are identical.
From (\ref{eq:Rlwmin-cap}), we see that
\begin{align}\label{eq-beta-gamma}
\textsf{Rlwmin}(\gamma_{i})\cap\lceil w_{i+1}\rceil=
	\textsf{Rlwmin}(\beta_{i})\cap\lceil w_{i+1}\rceil.
\end{align}
Therefore,
\begin{align*} 
	\textsf{Rlwmin}(\gamma_{i+1})
	&=\textsf{Rlwmin}(\beta_{i}w_{i+1})\\
	&=\{w_{i+1}\}\cup(\textsf{Rlwmin}(\beta_{i})\cap\lceil w_{i+1}\rceil)
\text{\quad\quad\quad\quad\quad(by~Lemma \ref{lemma-Rlwmin})~} \\
	&=\{w_{i+1}\}\cup(\textsf{Rlwmin}(\gamma_{i})\cap\lceil w_{i+1}\rceil)\text{\quad\quad\quad\quad\quad(by (\ref{eq-beta-gamma}))} \\
	&=\{w_{i+1}\}\cup(\textsf{Rlwmin}(w_{1}w_{2}\ldots w_{i})\cap\lceil w_{i+1}\rceil)\text{\quad(by the induction hypothesis)} \\
	&=\textsf{Rlwmin}(w_{1}w_{2}\ldots w_{i}w_{i+1})\text{\quad\quad\quad\quad\quad\quad\quad~~~(by Lemma \ref{lemma-Rlwmin})}. 
\end{align*}
The induction is now complete. 
Taking $i=n$ in our claim and recalling that $F_r(w)=\gamma_n$ completes the proof.
\end{proof}
By combining Proposition \ref{Prop-Foata-d--keeps-Rlwmin} and Theorem \ref{Thm-F_d-bijection}, 
we establish the equidistribution of $\mathsf{inv}$ and $\mathsf{inv}_r$ on $\mathfrak{S}_{\alpha,R}$.

\section{Equidistribution of \textsf{inv} and $r$\textsf{maj}  on $\mathfrak{S}_{\alpha,R}$}\label{section-inv-rmaj}
The $r$-major index was introduced by
Rawlings \cite{Rawlings-1981} for permutations
and was extended to words in \cite{Rawlings-1981-2}.
Given a word $w=w_{1}w_{2}\ldots w_{n}\in\mathfrak{S}_{\alpha}$, 
a position $i$ with $1\leqslant i\leqslant n-1$ is called an \emph{$r$-descent} of $w$ if $w_{i}\geqslant w_{i+1}+r$.
Let $r\textsf{Des}(w)$ and $r\textsf{des}(w)$ denote the set and the number of $r$-descents of $w$, respectively. That is,
\begin{align*}
	r\textsf{Des}(w) =\{i\in[n-1]:w_{i}\geqslant w_{i+1}+r\}, \text{\quad\quad} 
	r\textsf{des}(w) = |r\textsf{Des}(w)|.
\end{align*}
Let
\begin{align*}
r\textsf{Inv}(w) =\{(i,j):1\leqslant i<j\leqslant n, 
w_{i}>w_{j}>w_{i}-r\}.
\end{align*}
Define the \emph{$r$-major index} of $w$,
denoted by $r\textsf{maj}(w)$, to be
\begin{align*}
r\textsf{maj}(w)=|r\textsf{Inv}(w)|+\sum_{i\in r\textsf{Des}(w)}i.
\end{align*}
It is clear that $r\textsf{maj}$ reduces to \textsf{maj} when $r=1$ and to \textsf{inv} when $r\geqslant m$.
Hence, the statistic $r\textsf{maj}$ interpolates between \textsf{maj} and \textsf{inv}.

Rawlings \cite{Rawlings-1981} showed that for any $r\geqslant1$, $r\textsf{maj}$ is equidistributed with \textsf{inv} over permutations by constructing a bijection on permutations that sends \textsf{inv} to $r\textsf{maj}$.
In \cite{Rawlings-1981-2},
he extended this bijection to words and established the equidistribution of $\textsf{inv}$ and $r\textsf{maj}$ on $\mathfrak{S}_{\alpha}$.
We now describe this bijection.

Given $w\in\mathfrak{S}_{\alpha}$, 
let $u_{j}(i)$ denote the number of letters in $w$ that lie to the right of the $j$-th occurrence of $i$ from the left  and are strictly smaller than $i$. 
For example, if $w=2152431552$, 
then $u_{1}(5)=5$, $u_{2}(5)=1$, and  $u_{3}(5)=1$. 
Clearly,
$u_{1}(i)\geqslant u_{2}(i)\geqslant\cdots\geqslant u_{\alpha_{i}}(i)$ holds for each $i\in[m]$.
It is straightforward to verify that for $w \in \mathfrak{S}_{\alpha}$, 
the inversion number is given by
$$\textsf{inv}(w)=\sum_{i=1}^{m}\sum_{j=1}^{\alpha_{i}}u_{j}(i).$$

Rawlings' bijection $R:\mathfrak{S}_{\alpha}\rightarrow\mathfrak{S}_{\alpha}$ is defined recursively. 
Recall that $\alpha=(\alpha_{1},\alpha_{2},\ldots,\alpha_{m})$.
If $m=1$, let $R$ be the identity map. 
Henceforth assume $m>1$ and
set $\alpha^{\prime}=(\alpha_{1},\alpha_{2},\ldots,\alpha_{m-1})$.
For $w\in\mathfrak{S}_{\alpha}$, 
let $w^{\prime}\in\mathfrak{S}_{\alpha^{\prime}}$ be obtained from $w$ by removing all occurrences of $m$.
Suppose $R(w^{\prime})$ is given. 
We construct $R(w)$ by successively inserting $\alpha_{m}$ copies of $m$ into $R(w^{\prime})$. 
Let $\gamma_{j}$ denote the word obtained after inserting the first $j$ copies of $m$, with $\gamma_{0}=R(w^{\prime})$. 
Given $\gamma_{j-1}$,
we obtain $\gamma_{j}$ as follows.

$\bullet$ First, place a star at each position immediately preceding each of the $j-1$ previously inserted copies of $m$.
(Thus, for $j=1$, no positions are starred.)

$\bullet$ Next, consider the unstarred positions.
Let $d+1$ be the number of unstarred positions at which inserting 
$m$ 
does not increase the number of $r$-descents.
Label these $d+1$ positions from right to left with $0, 1, \dots, d$ (so the rightmost position is always labeled by $0$). 
The remaining unstarred positions---those that would increase the number of $r$-descents by one---are labeled from left to right with ${d+1, d+2, \dots, n^{\prime}}$, where $n^{\prime}=\alpha_{1}+\cdots+\alpha_{m-1}$.

$\bullet$ Finally, insert the $j$-th copy of $m$ into the position  labeled by $u_{j}(m)$.

For example, let $r=3$ and consider inserting the third $5$ into $\gamma_{2}=215243152$.
The potential insertion positions and their labels are shown below.
$$\includegraphics[width=6.5cm]{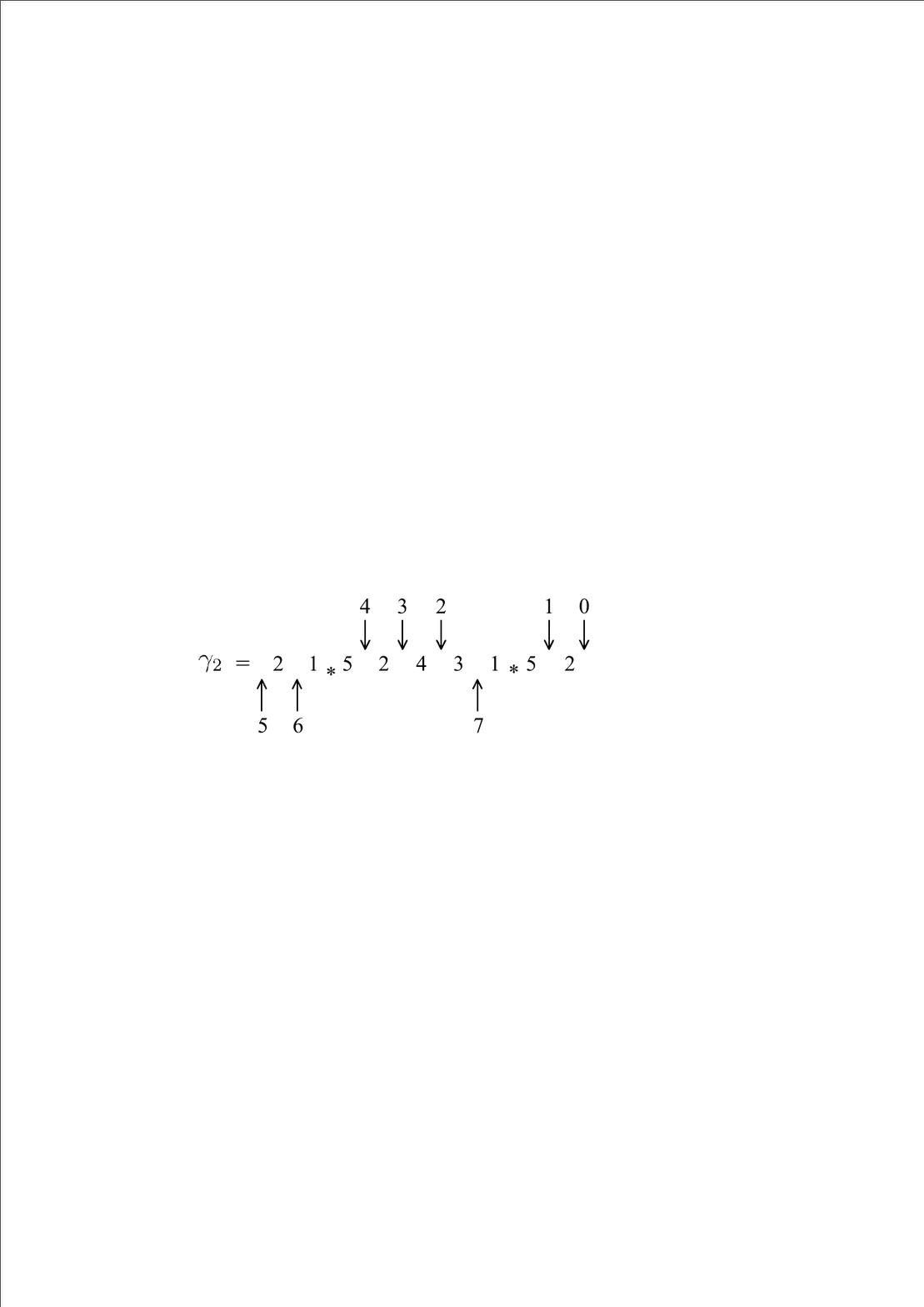}$$
The top row corresponds to positions at which the number of $3$-descents does not increase, while the bottom row corresponds to positions at which it increases by one.
If $u_{3}(5)=1$,
inserting the third $5$ into the position labeled $1$ yields the word $2152431552.$

By the above procedure and the fact that
$$u_{1}(m)\geqslant u_{2}(m)\geqslant\cdots\geqslant u_{\alpha_{m}}(m),$$
one verifies that for any $j$ with $1\leqslant j\leqslant \alpha_{m}$,
$$r\textsf{maj}(\gamma_{j})=r\textsf{maj}(\gamma_{j-1})+u_{j}(m).$$
Then by induction  we obtain
\begin{align}\label{eq-rmaj-inv}
	r\textsf{maj}(R(w))=\sum_{i=1}^{m}\sum_{j=1}^{\alpha_{i}}u_{j}(i)=
	\textsf{inv}(w).
\end{align}

We now prove that the bijection $R$ preserves the multiset of weak right-to-left minima.
\begin{proposition}\label{Prop-Rawlings-M-keeps-Rlwmin}
Let $r\geqslant1$. For any $w\in\mathfrak{S}_{\alpha}$,
we have $\emph{\textsf{Rlwmin}}(w)=\emph{\textsf{Rlwmin}}(R(w))$.
\end{proposition}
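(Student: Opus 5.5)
The plan is to argue by induction on $m$, following the recursive definition of $R$. For $m=1$ the map $R$ is the identity, so there is nothing to prove. For $m>1$, let $w'$ be obtained from $w$ by deleting all copies of $m$. By the induction hypothesis, $\textsf{Rlwmin}(w')=\textsf{Rlwmin}(R(w'))$. Since $R(w)$ is obtained from $\gamma_0=R(w')$ by inserting copies of $m$, deleting all copies of $m$ from $R(w)$ returns $R(w')$. So it suffices to prove two facts for any word $v$ on $[m]$ with $v'$ its $m$-deleted word, and then to match the copies of $m$.

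\emph{Step 1 (letters smaller than $m$).} Since $m$ is the largest letter, an occurrence of $m$ never prevents a letter $a<m$ from being a weak right-to-left minimum, because $a\leqslant m$. Hence
\[
\textsf{Rlwmin}(v)\cap\lceil m-1\rceil=\textsf{Rlwmin}(v').
\]
Applying this to $v=w$ and $v=R(w)$ and using the induction hypothesis, the two multisets agree on all letters below $m$.

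\emph{Step 2 (copies of $m$).} A copy of $m$ in a word is a weak right-to-left minimum if and only if no letter smaller than $m$ lies to its right. In $w$, the $j$-th copy of $m$ is therefore a weak right-to-left minimum exactly when $u_j(m)=0$. So the multiplicity of $m$ in $\textsf{Rlwmin}(w)$ is $|\{j:u_j(m)=0\}|$. Because $u_1(m)\geqslant\cdots\geqslant u_{\alpha_m}(m)$, this set is a final segment $\{t+1,\dots,\alpha_m\}$.

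I would then show that in $R(w)$ the copy inserted at step $j$ has no smaller letter to its right if and only if $u_j(m)=0$. This rests on two observations.

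\begin{enumerate}[label=(\roman*)]
\item The position at the end of $\gamma_{j-1}$ is never starred, since it precedes no letter. Inserting $m$ there never creates an $r$-descent, so it is the rightmost non-increasing position and carries label $0$. Hence $u_j(m)=0$ means that the $j$-th copy is appended at the end. Since the indices with $u_j(m)=0$ form a final segment, all later copies are also appended at the end, so this copy ends in a terminal block of $m$'s and is a weak right-to-left minimum.
\item If $u_j(m)>0$, the copy is inserted at an unstarred position other than the end. Such a position is not immediately before an earlier-inserted $m$, so the letter immediately to its right is smaller than $m$. Later insertions never remove letters, so in $R(w)$ this copy still has a smaller letter to its right and is not a weak right-to-left minimum.
\end{enumerate}

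Thus $m$ has the same multiplicity $\alpha_m-t$ in $\textsf{Rlwmin}(w)$ and in $\textsf{Rlwmin}(R(w))$. Together with Step 1, this gives $\textsf{Rlwmin}(w)=\textsf{Rlwmin}(R(w))$.

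The main obstacle is observation (ii) together with the claim in (i) that label $0$ is exactly the end position. Both must be checked carefully against the labeling convention: labels $0,\dots,d$ go right to left on the non-increasing unstarred positions, and the end position always lies among them. Once the starring rule is seen to guarantee that a non-end insertion sits immediately before a letter smaller than $m$, the rest is bookkeeping.
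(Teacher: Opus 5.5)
Your proof is correct and follows essentially the same route as the paper. Both argue by induction on $m$, split $\textsf{Rlwmin}$ into the part coming from $w'$ plus the copies of $m$ with no smaller letter to their right, and use that the $j$-th copy is placed at the end (label $0$) exactly when $u_j(m)=0$. The paper's final step is slightly more economical: it tracks only the length $k$ of the terminal block of $m$'s, since the first $\alpha_m-k$ copies avoid the end and so the letter preceding that block in $R(w)$ is the last letter of $R(w')$. Your appeal to the starring rule in (ii) is therefore valid but not needed.
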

\begin{proof}
	We use induction on $m$. The case $m = 1$ is obvious.
	Below we assume that $m>1$.
	Assume that the proposition holds for $m-1$. We prove it for $m$.
	Let $w=w_{1}w_{2}\ldots w_{n}\in\mathfrak{S}_{\alpha}$,
	and let $w^{\prime}\in\mathfrak{S}_{\alpha^{\prime}}$
	be obtained from $w$ by removing all occurrences of $m$.
	Assume that the rightmost $k$ ($k\geqslant0$) letters of $w$ are all $m$, and that the $(k+1)$-st letter from the right is not $m$.
	It is clear that  
	\begin{align}\label{eq-w-last-a-Rlwmin}
		\textsf{Rlwmin}(w)=\textsf{Rlwmin}(w^{\prime})\cup\{m^{k}\}.
	\end{align}
    By the choice of $k$ and the monotonicity of $u_{j}(m)$, we have
	\begin{align}\label{eq-assume-r-u_{j}}
		u_{1}(m)\geqslant u_{2}(m)\geqslant\cdots\geqslant u_{\alpha_{m}-k}(m)> u_{\alpha_{m}-k+1}(m)=u_{\alpha_{m}-k+2}(m)=\cdots=u_{\alpha_{m}}(m)=0.
	\end{align}
	By the procedure of obtaining $\gamma_{j}$ from $\gamma_{j-1}$,
	we see that the $j$-th copy of $m$ is inserted in the rightmost position
    if and only if $u_{j}(m)=0$,
	where $\gamma_{0}=R(w^{\prime})$ and $\gamma_{\alpha_{m}}=R(w)$.
	Then by (\ref{eq-assume-r-u_{j}}), we see that
	the rightmost $k$ letters of $R(w)$ are all equal to $m$ and the  $(k+1)$-st letter of  $R(w)$ from the right is not $m$.
	Therefore,
	\begin{align}\label{eq-R(w)-Rlwmin}
		\textsf{Rlwmin}(R(w))
		=\textsf{Rlwmin}(R(w^{\prime}))\cup\{m^{k}\}.
	\end{align}
	Combining  (\ref{eq-w-last-a-Rlwmin}), (\ref{eq-R(w)-Rlwmin}),
	and the induction hypothesis that
	$\textsf{Rlwmin}(w^{\prime})=\textsf{Rlwmin}(R(w^{\prime}))$,
	we have 
	$$\textsf{Rlwmin}(w)=\textsf{Rlwmin}(R(w)).$$
	This completes the proof.
\end{proof}
Combining Proposition \ref{Prop-Rawlings-M-keeps-Rlwmin} with (\ref{eq-rmaj-inv}) yields the equidistribution of \textsf{inv} and $r\textsf{maj}$ on $\mathfrak{S}_{\alpha,R}$ for any $r\geqslant1$.

\section{Equidistribution of $(r\textsf{des},r\textsf{maj})$ and $(r\textsf{exc},r\textsf{den})$  on $\mathfrak{S}_{\alpha,R}$}\label{section-(des,maj)-(exc,den)}
The goal of this section is to establish the equidistribution of $(r\textsf{des},r\textsf{maj})$ and $(r\textsf{exc},r\textsf{den})$  on $\mathfrak{S}_{\alpha,R}$.
In particular, when $r=1$, this reduces to the equidistribution of $(\textsf{des},\textsf{maj})$ and $(\textsf{exc},\textsf{den})$ on $\mathfrak{S}_{\alpha,R}$.

Let $w=w_{1}w_{2}\ldots w_{n}\in\mathfrak{S}_{\alpha}$
with $\overline{w}=a_{1}a_{2}\ldots a_{n}$
(recall that $\overline{w}$ is the nondecreasing rearrangement of $w$). 
The \emph{two-line notation} of $w$ is 
\begin{equation*}
	w=
	\left(
	\begin{matrix}
		a_{1} & a_{2}  & a_{3}&  \ldots&a_{n}\\
		w_{1} & w_{2}  & w_{3}&  \ldots&w_{n}
	\end{matrix}
	\right).
\end{equation*}
An $r$-\emph{excedance} of $w$ is a position $i$ such that $w_{i}\geqslant a_{i}+r$.
Let $r\textsf{Exc}(w)$ and $r\textsf{exc}(w)$ denote the set and the number of $r$-excedances of $w$, respectively. 
If $i$ is an $r$-excedance of $w$,
we call $w_{i}$ an $r$-\emph{excedance-letter}.
Let $r\textsf{\scriptsize{EXCL}}(w)$
be the subsequence of $w$ that consists of the $r$-excedance-letters,
and let $r\textsf{\scriptsize{NEXCL}}(w)$ be the subsequence of $w$ that consists of the remaining letters (i.e., the non-$r$-excedance-letters).
The \emph{$r$-gap Denert's statistic}, denoted by $r\textsf{den}$,
was introduced by Huang, Lin, and Yan \cite{Huang-Lin-Yan-2026} and is defined as 
\begin{align}\label{def-rden}
 r\textsf{den}(w)=\sum_{i\in r\textsf{Exc}(w)}\left(i+B_{i}^{r}(w)\right)+\textsf{imv}(r\textsf{\scriptsize{EXCL}}(w))+\textsf{inv}(r\textsf{\scriptsize{NEXCL}}(w)),
\end{align}
where $B_{i}^{r}(w)=|\{j:w_{i}-r<a_{j}<w_{i}\}|$,
and $\textsf{imv}$ is the \emph{weakly inversion number}, defined by
$$\textsf{imv}(w)=|\{(i,j):i<j,w_{i}\geqslant w_{j}\}|.$$ 
Huang, Lin, and Yan \cite{Huang-Lin-Yan-2026} proved that for any $r\geqslant1$,
\begin{align}\label{eq-huang-lin-yan}
\sum_{w\in\mathfrak{S}_{\alpha}}t^{r\textsf{des}(w)}q^{r\textsf{maj}(w)}=
\sum_{w\in\mathfrak{S}_{\alpha}}t^{r\textsf{exc}(w)}q^{r\textsf{den}(w)}. 
\end{align} 
\begin{remark}
In the case $r=1$, we suppress the parameter $r$ in all the above notation.
Then $r\textsf{den}$ reduces to Denert's statistic for words, \textsf{den}, defined by
\begin{align}\label{def-den-Han}
\textsf{den}(w)=\sum_{i\in \textsf{Exc}(w)}i+\textsf{imv}(\textsf{\scriptsize{EXCL}}(w))+\textsf{inv}(\textsf{\scriptsize{NEXCL}}(w)),
\end{align}
which was introduced in \cite{Han-1994}.
Setting $r=1$ in (\ref{eq-huang-lin-yan}) yields the equidistribution of $(\textsf{des},\textsf{maj})$ and $(\textsf{exc},\textsf{den})$ on $\mathfrak{S}_{\alpha}$, which was proved in \cite{Han-1994}.
\end{remark}

The goal of this section is to prove the following refinement of (\ref{eq-huang-lin-yan}):
\begin{align}\label{eq-huang-lin-yan-refinement}
	\sum_{w\in\mathfrak{S}_{\alpha,R}}t^{r\textsf{des}(w)}q^{r\textsf{maj}(w)}=
	\sum_{w\in\mathfrak{S}_{\alpha,R}}t^{r\textsf{exc}(w)}q^{r\textsf{den}(w)}. 
\end{align} 
The case $r\geqslant m$ is trivial, 
since $(r\textsf{des},r\textsf{maj})$ and $(r\textsf{exc},r\textsf{den})$ both reduce to $(0,\textsf{inv})$ in this case.
Below, we assume that $1\leqslant r< m$.
To establish (\ref{eq-huang-lin-yan-refinement}) for $1\leqslant r< m$, 
it suffices to prove that the bijection $H_{r\textsf{den}}$ introduced in \cite{Huang-Lin-Yan-2026} preserves the multiset of weak right-to-left minima.
Before recalling the bijection $H_{r\textsf{den}}$, we need some notation.

A \emph{biword} $v$ is an ordered pair of multipermutations over the same multiset,
written as
\begin{equation*}
 v=\left(
 \begin{matrix}
   x_{1} & x_{2}  & x_{3}&  \ldots&x_{n}\\
   w_{1} & w_{2}  & w_{3}&  \ldots&w_{n}
  \end{matrix}
  \right).
\end{equation*}
In particular, if $x_{1}x_{2}\ldots x_{n}$ is the nondecreasing rearrangement of $w_{1}w_{2}\ldots w_{n}$, then the biword $v$ is the two-line notation of the word $w=w_{1}w_{2}\ldots w_{n}$.
Thus, the two-line notation of a word can be viewed as a biword.
\begin{definition}
A biword
$
 \left(
 \begin{matrix}
   x_{1} & x_{2}  & x_{3}&  \ldots&x_{n}\\
   w_{1} & w_{2}  & w_{3}&  \ldots&w_{n}
  \end{matrix}
  \right)
$
is called a \emph{dominated cycle} if either $n=1$ or $n>1$, $w_{1}=x_{n}$, $w_{i}=x_{i-1}$ and $w_{1}>w_{i}$ for all $2\leqslant i\leqslant n$.
\end{definition}

\begin{definition}
Let $\mathbb{P}$ be the set of positive integers.
Given $x,y\in\mathbb{P}$, the cyclic interval $\rrbracket x, y \rrbracket$ is defined by
\begin{align*}
\rrbracket x, y \rrbracket=
\begin{cases}
\{z\in \mathbb{P}:x<z\leqslant y\},& \text{if~}x\leqslant y;\\
\{z\in \mathbb{P}:x<z\text{~or~} z\leqslant y\}& \text{if~}x> y.
\end{cases}
\end{align*}
\end{definition}

\begin{definition}
Let $1\leqslant r< m$. 
For any $1\leqslant i\leqslant n-1$, define the operator $T_{i}^{r}$ on biword 
$v=
 \left(
 \begin{matrix}
   x_{1} & x_{2}  & x_{3}&  \ldots&x_{n}\\
   w_{1} & w_{2}  & w_{3}&  \ldots&w_{n}
  \end{matrix}
  \right)
$ to be
$$ T_{i}^{r}(v)=
 \left(
 \begin{matrix}
   x_{1} & x_{2}& \ldots&x_{i-1}\\
   w_{1} & w_{2}& \ldots&w_{i-1}
  \end{matrix}
  \right)
  T^{r}
  \left(
 \begin{matrix}
   x_{i} & x_{i+1} \\
   w_{i} & w_{i+1}
  \end{matrix}
  \right)
  \left(
 \begin{matrix}
   x_{i+2} &  \ldots&x_{n}\\
   w_{i+2} &  \ldots&w_{n}
  \end{matrix}
  \right),$$
  where
$$T^{r}
  \left(
 \begin{matrix}
   x & y \\
   a & b
  \end{matrix}
  \right) =
\begin{cases}
  \left(
 \begin{matrix}
   y & x \\
   b &a
  \end{matrix}
  \right),& \text{if~exactly~one~of~}a\text{~and~}b\text{~lies~in~}\rrbracket x+r-1, y+r-1 \rrbracket,\\
  \left(
 \begin{matrix}
   y & x \\
a&b
  \end{matrix}
  \right),& \text{otherwise}.
\end{cases}
  $$
\end{definition}

Now we state the algorithm $\Gamma_{r\textsf{den}}$ which decomposes a word into dominated cycles.
Let $w=w_{1}w_{2}\ldots w_{n}\in\mathfrak{S}_{\alpha}$,
and write it in the two-line notation
\begin{equation*}
 w=
 \left(
 \begin{matrix}
   a_{1} & a_{2}  & a_{3}&  \ldots&a_{n}\\
   w_{1} & w_{2}  & w_{3}&  \ldots&w_{n}
  \end{matrix}
  \right).
\end{equation*}

If $n=1$, then $w$ itself is a dominated cycle.
In what follows, we assume that $n\geqslant2$.

If $w_{n}=a_{n}$, then set
\begin{equation*}
 \widetilde{w}=
 \left(
 \begin{matrix}
   a_{1} & a_{2}  & a_{3}&  \ldots&a_{n-1}\\
   w_{1} & w_{2}  & w_{3}&  \ldots&w_{n-1}
  \end{matrix}
  \right),
 ~~u=
 \left(
 \begin{matrix}
   a_{n} \\
   w_{n}
  \end{matrix}
  \right).
\end{equation*}

Otherwise $w_{n}\neq a_{n}$, let $i_{1}$ be the largest index such that $a_{i_{1}}= w_{n}$, and set
\begin{equation*}
 w^{(1)}=T_{n-2}^{r}\circ T_{n-3}^{r}\circ\cdots \circ T_{i_{1}}^{r}(w)=
 \left(
 \begin{matrix}
   a_{1}^{(1)} & a_{2}^{(1)}  &\ldots&a_{n-2}^{(1)}& w_{n}&a_{n}\\
   w_{1}^{(1)} & w_{2}^{(1)}  & \ldots&w_{n-2}^{(1)}&w_{n-1}^{(1)} &w_{n}
  \end{matrix}
  \right).
\end{equation*}
Thus, if $i_{1}=n-1$, the above composition is empty, 
and hence $w^{(1)} = w$.

If $w_{n-1}^{(1)}=a_{n}$, then set
\begin{equation*}
\widetilde{w}=
 \left(
 \begin{matrix}
   a_{1}^{(1)} & a_{2}^{(1)}  &\ldots&a_{n-2}^{(1)}\\
   w_{1}^{(1)} & w_{2}^{(1)}  & \ldots&w_{n-2}^{(1)}
  \end{matrix}
  \right),
 ~~u=
 \left(
 \begin{matrix}
   w_{n}&a_{n} \\
   a_{n}&w_{n}
  \end{matrix}
  \right).
\end{equation*}

Otherwise $w_{n-1}^{(1)}\neq a_{n}$, let $i_{2}$ be the largest index such that $a_{i_{2}}^{(1)}= w_{n-1}^{(1)}$, and set
\begin{equation*}
 w^{(2)}=T_{n-3}^{r}\circ T_{n-4}^{r}\circ\cdots\circ T_{i_{2}}^{r}(w^{(1)})=
 \left(
 \begin{matrix}
a_{1}^{(2)} & a_{2}^{(2)}&\ldots&a_{n-3}^{(2)}& w_{n-1}^{(1)}&w_{n}&a_{n}\\
w_{1}^{(2)} & w_{2}^{(2)}& \ldots&w_{n-3}^{(2)}&w_{n-2}^{(2)}&w_{n-1}^{(1)} &w_{n}
  \end{matrix}
  \right).
\end{equation*}
Thus, if $i_{2}=n-2$, the above composition is empty, 
and hence $w^{(2)} = w^{(1)}$.

Similarly, we can repeat the above process by considering whether $w_{n-2}^{(2)}$ is equal to $a_{n}$.
So we can obtain a sequence of words $w^{(1)},w^{(2)},\ldots,w^{(t)}$  such that
\begin{equation*}
 w^{(t)}=
 \left(
 \begin{matrix}
a_{1}^{(t)} & a_{2}^{(t)}&\ldots &a_{n-t-1}^{(t)}&w_{n-t+1}^{(t-1)}&\ldots&w_{n}         &a_{n}\\
w_{1}^{(t)} & w_{2}^{(t)}& \ldots&w_{n-t-1}^{(t)}&w_{n-t}^{(t)}&\ldots&w_{n-1}^{(1)} &w_{n}
  \end{matrix}
  \right),
\end{equation*}
where $w_{n-t}^{(t)}=a_{n}$ and $w_{n-t+1}^{(t-1)}\neq a_{n},\ldots,w_{n-1}^{(1)}\neq a_{n},w_{n}\neq a_{n}$.
Set
\begin{equation*}
\widetilde{w}=
 \left(
 \begin{matrix}
a_{1}^{(t)} & a_{2}^{(t)}&\ldots &a_{n-t-1}^{(t)}\\
w_{1}^{(t)} & w_{2}^{(t)}& \ldots&w_{n-t-1}^{(t)}
  \end{matrix}
  \right),
 ~~u=
 \left(
 \begin{matrix}
w_{n-t+1}^{(t-1)}&w_{n-t+2}^{(t-2)}&\ldots&w_{n}     &a_{n}\\
a_{n}    &w_{n-t+1}^{(t-1)}&\ldots&w_{n-1}^{(1)} &w_{n}
  \end{matrix}
  \right).
\end{equation*}
Note that $a_{n}\geqslant a_{i}$ for $1\leqslant i\leqslant n$, and
$w_{n-t+1}^{(t-1)}\neq a_{n},\ldots,w_{n-1}^{(1)}\neq a_{n},w_{n}\neq a_{n}$,
then we see that $u$ is a dominated cycle.

Observe that $\widetilde{w}$ is the two-line notation of the word
$w_{1}^{(t)}w_{2}^{(t)}\ldots w_{n-t-1}^{(t)}$.
By induction, assume that $\widetilde{w}$ admits a decomposition $u_{1},u_{2},\ldots,u_{s}$ into dominated cycles.
Then the  decomposition of $w$ is defined to be $u_{1},u_{2},\ldots,u_{s},u$.
Let $\Gamma_{r\textsf{den}}(w)=u_{1}u_{2}\ldots u_{s}u$.
Finally, define $H_{r\textsf{den}}(w)$ to be the word obtained by concatenating the bottom rows of the  cycles of $\Gamma_{r\textsf{den}}(w)$.
An illustrative example of the process for $\Gamma_{r\textsf{den}}(w)$ can be found in \cite[Example 4.5]{Huang-Lin-Yan-2026}.

\begin{theorem}[Huang--Lin--Yan \cite{Huang-Lin-Yan-2026}]\label{Thm-Huang-Lin-Yan-2026}
Let $1\leqslant r< m$.
Then $H_{r\textsf{den}}:\mathfrak{S}_{\alpha}\rightarrow\mathfrak{S}_{\alpha}$ is a bijection
satisfying
$$\left(r\emph{\textsf{exc}},r\emph{\textsf{den}}\right)w=
\left(r\emph{\textsf{des}},r\emph{\textsf{maj}}\right)H_{r\textsf{den}}(w)$$ for all $w\in\mathfrak{S}_{\alpha}$.
\end{theorem}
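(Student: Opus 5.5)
The plan is to argue by induction on $n$, following the recursive structure of $\Gamma_{r\textsf{den}}$. I would establish separately that $H_{r\textsf{den}}$ is a bijection and that it transports $(r\textsf{exc},r\textsf{den})$ to $(r\textsf{des},r\textsf{maj})$.

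\emph{Bijectivity.} I would build the inverse by reading $H_{r\textsf{den}}(w)$ from the right.
\begin{enumerate}
\item By definition of a dominated cycle, its first bottom letter is strictly larger than all its other bottom letters. In particular, the last cycle $u$ begins with the letter $a_n=\max$, and every later letter of $u$ is different from $a_n$.
\item Hence $u$ is recovered as the suffix of $H_{r\textsf{den}}(w)$ that starts at the rightmost occurrence of the maximal letter $m$.
\item The top row of $u$ is then forced, since it is the cyclic shift of its bottom row.
\item Next, $T^{r}$ is an involution on two-column biwords. Indeed, the test ``exactly one of $a,b$ lies in $\rrbracket x+r-1,y+r-1\rrbracket$'' is unchanged when $x$ and $y$ are swapped, because the two cyclic intervals $\rrbracket x+r-1,y+r-1\rrbracket$ and $\rrbracket y+r-1,x+r-1\rrbracket$ are complementary in the relevant range.
\item It follows that each $T_i^{r}$ can be undone. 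From $\widetilde w$ and $u$ one reconstructs $w^{(t)}$, and then $w^{(t-1)},\dots,w^{(1)},w$, by applying the $T^{r}_i$ in reverse order.
\item To reverse the $T^{r}_i$ one needs the indices $i_1,i_2,\dots$. These are recovered as the positions where the top row must again become weakly increasing, that is, where the inserted top letter $w^{(j-1)}_{n-j+1}$ settles after its last occurrence among equal letters.
\item Induction on $n$ applied to $\widetilde w$ then finishes bijectivity.
\end{enumerate}

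\emph{Statistics.} Write $H(w)=H_{r\textsf{den}}(\widetilde w)\,\mathrm{bot}(u)$, where $\mathrm{bot}(u)$ is the bottom row of $u$, of length $\ell=t+1$. By induction, $(r\textsf{exc},r\textsf{den})\widetilde w=(r\textsf{des},r\textsf{maj})H_{r\textsf{den}}(\widetilde w)$. So it suffices to prove two equalities:
\begin{align*}
r\textsf{exc}(w)-r\textsf{exc}(\widetilde w)&=r\textsf{des}(H(w))-r\textsf{des}(H_{r\textsf{den}}(\widetilde w)),\\
r\textsf{den}(w)-r\textsf{den}(\widetilde w)&=r\textsf{maj}(H(w))-r\textsf{maj}(H_{r\textsf{den}}(\widetilde w)).
\end{align*}

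\begin{itemize}
\item On the $r\textsf{maj}$ side, appending $\mathrm{bot}(u)$ contributes three things:
  \begin{itemize}
  \item $r$-inversions and $r$-descents inside $\mathrm{bot}(u)$;
  \item pairs crossing from the prefix into $\mathrm{bot}(u)$;
  \item a possible $r$-descent at the junction.
  \end{itemize}
\item On the $r\textsf{den}$ side, I would decompose the passage from $w$ to $\widetilde w$ into elementary steps. Each single $T^{r}_i$ either swaps both rows or swaps only the top row. I would then check a local invariant: for the partial biword $w^{(j)}$, define a biword version of $r\textsf{den}$ that uses the current top row in place of $\overline w$. Under each $T_i^{r}$ this quantity should change by a controlled amount, namely $0$ or $\pm1$ according to the two cases of $T^{r}$. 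The cyclic-interval condition is designed precisely so that the terms $i+B_i^{r}$, $\textsf{imv}$ and $\textsf{inv}$ compensate each other.
\item Summing these local changes along the chains $i_1\to n-2$, $i_2\to n-3$, and so on, and then splitting off the dominated cycle $u$, should reproduce the three $r\textsf{maj}$ contributions above. The position-sum $\sum i$ over $r$-descents in $\mathrm{bot}(u)$ should match the positions of $r$-excedances of the removed columns, shifted by $n-\ell$.
\item A parallel and easier count shows that each non-first column of $u$ is an $r$-excedance exactly when the corresponding adjacent pair in $\mathrm{bot}(u)$ is an $r$-descent.
\end{itemize}

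The main obstacle is the local invariant for $T_i^{r}$ on the $r\textsf{den}$ side. Specifically, one must verify in all cases, depending on where $a$ and $b$ sit relative to $x+r-1$ and $y+r-1$, that the change in $B^{r}$, $\textsf{imv}(r\textsf{\scriptsize{EXCL}})$ and $\textsf{inv}(r\textsf{\scriptsize{NEXCL}})$ is exactly what is needed. I would first do the case $r=1$, where the invariant should reduce to Han's argument for $(\textsf{exc},\textsf{den})$. I would then handle general $r$ by the substitution $x\mapsto x+r-1$, which explains the shift in the interval and in the definition of $B_i^{r}$.
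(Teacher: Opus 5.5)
The paper gives no proof of this theorem; it quotes it from Huang--Lin--Yan and only adds the $\textsf{Rlwmin}$-preservation. Your proposal therefore has to stand on its own.

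\textbf{What is sound.} The bijectivity half works.
\begin{itemize}
\item The last cycle is the suffix of $H_{r\textsf{den}}(w)$ that begins at the rightmost copy of the largest letter, and its top row is the cyclic shift of its bottom row.
\item $T^r$ is an involution. For $x\neq y$ the intervals $\rrbracket x+r-1,y+r-1\rrbracket$ and $\rrbracket y+r-1,x+r-1\rrbracket$ are complementary in $\mathbb{P}$. For $x=y$ the interval is empty and $T^r$ is the identity.
\item Each $i_j$ can be read off from the corresponding top letter of $u$ and the sorted top row of what remains.
\item Injectivity suffices, since $\mathfrak{S}_\alpha$ is finite.
\end{itemize}
State the induction for words on arbitrary finite multisets of positive integers, because $\widetilde w$ may skip values and the $r$-statistics depend on actual values.

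The $r\textsf{exc}$ half also works once you check the easy fact that every $T^r_i$ with $x_i<x_{i+1}$ preserves the number of columns with bottom $\geqslant$ top${}+r$. Two small corrections:
\begin{itemize}
\item In $u$, column $i$ is an $r$-excedance exactly when position $i$ of $\mathrm{bot}(u)$ is an $r$-descent, for $1\leqslant i\leqslant\ell-1$. It is the \emph{last} column (not the first) that is never one.
\item There is never an $r$-descent at the junction, because $\mathrm{bot}(u)$ starts with the maximum.
\end{itemize}

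\textbf{The gap.} The missing part is the $r\textsf{den}\to r\textsf{maj}$ half, which is the actual content of the theorem. You never define the ``biword version of $r\textsf{den}$'' and never do the case analysis for $T^r_i$. The claim that the accumulated local changes reproduce the $r\textsf{maj}$-increment of appending $\mathrm{bot}(u)$ is the theorem restated. That increment consists of the shifted position sum over the $r$-descents of $\mathrm{bot}(u)$ plus the $r$-inversions between the prefix and $\mathrm{bot}(u)$.

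Moreover, under the natural reading of your biword statistic, the predicted ``$0$ or $\pm1$'' drift is false. By the natural reading I mean: keep the position term $i$, and compute excedances and $B^r_i$ against the current top row. In a top-only swap with both $a,b$ in the interval, the excedance letter $a$ in column $i$ is replaced by $b$ in column $i+1$. Then $\textsf{imv}(r\textsf{EXCL})$ and $\textsf{inv}(r\textsf{NEXCL})$ change by counts of the other letters lying between $a$ and $b$. Concretely, take $r=1$ and $w=23457681$. The fifth move of the first stage meets the columns $\binom{1}{2}\binom{6}{6}$, swaps only the tops, and lowers this statistic from $27$ to $25$. So there is no bounded drift to sum along the chains.

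\textbf{What would close it.} The cyclic-interval rule is designed for \emph{exact} invariance of a column statistic. For $r=1$, one checks that on two-line notations $\textsf{den}(w)=\#\{i<j:\ w_i\in\rrbracket w_j,x_j\rrbracket\}$. This count is unchanged by every $T_i$ with $x_i<x_{i+1}$:
\begin{itemize}
\item Pairs with a later column only see the multiset $\{a,b\}$.
\item In a two-column swap, pairs with an earlier column see the same two columns.
\item In a top-only swap, pairs with an earlier column survive because, for $c\in\{a,b\}$, $\rrbracket c,y\rrbracket=\rrbracket c,x\rrbracket\sqcup\rrbracket x,y\rrbracket$ if $c\notin\rrbracket x,y\rrbracket$, and $\rrbracket c,x\rrbracket=\rrbracket c,y\rrbracket\sqcup\rrbracket y,x\rrbracket$ if $c\in\rrbracket x,y\rrbracket$. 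Here $a$ and $b$ lie on the same side.
\item The internal pair is settled by the ``exactly one'' rule.
\end{itemize}
For general $r$, the same argument makes the count with $\rrbracket w_j,x_j+r-1\rrbracket$ invariant under $T^r$. You would still have to prove two things:
\begin{enumerate}
\item this count equals $r\textsf{den}$ on two-line notations, including the $B^r_i$ term;
\item it evaluates to $r\textsf{maj}$ of the concatenated bottom rows on a product $u_1\cdots u_s$ of dominated cycles.
\end{enumerate}
As it stands, your proposal only shows that $H_{r\textsf{den}}$ is a bijection carrying $r\textsf{exc}$ to $r\textsf{des}$.
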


The following proposition shows that $H_{r\textsf{den}}:\mathfrak{S}_{\alpha}\rightarrow\mathfrak{S}_{\alpha}$ preserves the multiset of weak right-to-left minima,
which implies the equidistribution of $(r\textsf{des},r\textsf{maj})$ and $(r\textsf{exc},r\textsf{den})$  on $\mathfrak{S}_{\alpha,R}$.
\begin{proposition}\label{Prop-HLY-keeps-Rlwmin}
	Let $1\leqslant r< m$. For any $w\in\mathfrak{S}_{\alpha}$,
	we have $\emph{\textsf{Rlwmin}}(w)=\emph{\textsf{Rlwmin}}(H_{r\emph{\textsf{den}}}(w))$.
\end{proposition}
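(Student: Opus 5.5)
We argue by induction on $n$, following the recursive structure of $\Gamma_{r\textsf{den}}$. Write $\Gamma_{r\textsf{den}}(w)=u_{1}u_{2}\ldots u_{s}u$. By construction, $H_{r\textsf{den}}(w)=H_{r\textsf{den}}(\widetilde{w})\,\mathrm{bot}(u)$, where $\widetilde{w}$ is (the two-line notation of) a word on the multiset obtained by removing the letters of $u$, and $\mathrm{bot}(u)$ is the bottom row of the dominated cycle $u$. The key feature is that $u$ always contains exactly one copy of $a_{n}=m$, the largest letter: its bottom row is $a_{n}\,w^{(t-1)}_{n-t+1}\ldots w^{(1)}_{n-1}\,w_{n}$, and all letters after the leading $a_{n}$ are strictly smaller than $m$ in the non-trivial case. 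Since the first letter of $\mathrm{bot}(u)$ is the maximum, Lemma \ref{lemma-Rlwmin} gives
\[
\textsf{Rlwmin}(H_{r\textsf{den}}(w))=\textsf{Rlwmin}(\mathrm{bot}(u))\cup\bigl(\textsf{Rlwmin}(H_{r\textsf{den}}(\widetilde{w}))\cap\lceil \min \mathrm{bot}(u)\rceil\bigr),
\]
where we apply the lemma letter by letter. By induction, $\textsf{Rlwmin}(H_{r\textsf{den}}(\widetilde{w}))=\textsf{Rlwmin}(\widetilde{w})$. So it remains to compare this with $\textsf{Rlwmin}(w)$.

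We handle two cases. In the trivial case $w_{n}=a_{n}$, we have $u=\binom{m}{m}$ and $\widetilde{w}=w_{1}\ldots w_{n-1}$. Then $\textsf{Rlwmin}(w)=\{m\}\cup\textsf{Rlwmin}(w_{1}\ldots w_{n-1})\cap\lceil m\rceil$, which matches directly. In the main case, we track what the operators $T_{i}^{r}$ do. Each $T_{i}^{r}$ either swaps the two adjacent bottom letters or leaves them alone, and it always swaps the top letters. The passage from $w$ to $w^{(1)}$ slides the top entry $w_{n}$ into position $n-1$. The bottom letters $w_{i_1},\ldots,w_{n-1}$ get permuted in a controlled way, and $w^{(1)}_{n-1}$ emerges. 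We then iterate.

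The plan is to prove an invariant, in the spirit of Lemma \ref{lemma-J_{x}}: for each $j$, the multiset
\[
\textsf{Rlwmin}\bigl(w^{(j)}_{1}\ldots w^{(j)}_{n-j}\,w^{(j-1)}_{n-j+1}\ldots w_{n}\bigr)
\]
(the bottom row read in order, with the finalized suffix) equals $\textsf{Rlwmin}(w)$. We would prove this by showing that a single application of $T^{r}_{i}$ inside the active window never changes the relative order of two bottom letters that are both at most the current "pivot" (the smallest letter in the already fixed tail to the right). Swapping two adjacent letters $a,b$ changes $\textsf{Rlwmin}$ only if $\min(a,b)$ is at most everything to the right. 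So it suffices to check that whenever $T^{r}$ actually swaps the bottom pair $a,b$, both are larger than the minimum of the suffix to their right, or else $a=b$. This check, using the cyclic-interval condition $\rrbracket x+r-1,y+r-1\rrbracket$ together with the fact that the letter $w^{(j-1)}_{n-j+1}$ being carried left equals the top entries involved, is the main obstacle. It may require using that the tops in the window are $\geqslant$ the carried value, because the $a^{(j)}$ are sorted and we start at the largest index with that value. Once the invariant holds through all $t$ steps, the final word $\widetilde{w}\cdot\mathrm{bot}(u)$ has the same $\textsf{Rlwmin}$ as $w$. Combining this with the displayed formula and the induction hypothesis then finishes the proof.

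If the local-swap invariant fails in general, the fallback is to compare both sides only after intersecting with $\lceil \min(w^{(j-1)}_{n-j+1}\ldots w_{n})\rceil$, exactly as in the proof of Proposition \ref{Prop-Foata-d--keeps-Rlwmin}. Letters above that threshold cannot be weak right-to-left minima anyway, so the weaker statement suffices.
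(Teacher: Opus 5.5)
\textbf{There is a genuine gap, at the step you call the main obstacle.} Your reduction is sound and matches the paper's. Every $T^r_i$ either fixes the bottom row or swaps two adjacent bottom letters, and $H_{r\textsf{den}}(w)$ is the bottom row after all operators have been applied. Your induction on $\widetilde w$ just packages the recursive calls; the paper instead lists all operators $T^r_{k_1},\ldots,T^r_{k_N}$ used in $\Gamma_{r\textsf{den}}(w)$ at once. So everything hinges on showing that each actual swap preserves $\textsf{Rlwmin}$. You leave that step undone, and the criterion you commit to checking is false.

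For adjacent distinct letters $a,b$ with the letters $S$ to their right, the swap changes $\textsf{Rlwmin}$ if and only if $\max(a,b)\leqslant\min S$; the smaller letter's status is unaffected by the larger one. You instead require both $a$ and $b$ to exceed $\min S$. This fails already for $r=1$, $w=4132$:
\begin{itemize}
\item here $i_1=2$, and $T_2$ acts on the columns $\binom{2\ 3}{1\ 3}$;
\item exactly one bottom letter, $3$, lies in $\rrbracket 2,3\rrbracket=\{3\}$;
\item so the bottom row becomes $4312$, which swaps $1$ and $3$ in front of the suffix $2$.
\end{itemize}
$\textsf{Rlwmin}$ is still $\{1,2\}$, as it must be, but your condition is violated. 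Your first phrasing, that no swap reorders two letters both at most the pivot, is the correct condition. The ``so it suffices'' sentence then replaces it with a wrong one. The fallback of intersecting with $\lceil\cdot\rceil$ does not avoid this, since it reduces to the very same swap condition.

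\textbf{The missing argument is short.}
\begin{itemize}
\item Whenever $T^r$ is applied, the top pair is $(x,y)$, where $x$ is the value being carried and $y$ is the next entry of the still-sorted part of the top row.
\item The starting index was chosen as the largest one whose top entry equals $x$, so $x<y$ and $\rrbracket x+r-1,y+r-1\rrbracket=\{x+r,\ldots,y+r-1\}$.
\item The value $x$ occurs in the bottom row strictly to the right of the pair. The top copy of $x$ moves right, but the bottom copy ($w_n$, respectively $w^{(j-1)}_{n-j+1}$) stays in the fixed tail. (So it is not ``carried left'' as you wrote.)
\item A bottom swap occurs only when exactly one of the two letters lies in $\{x+r,\ldots,y+r-1\}$. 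That letter exceeds $x\geqslant\min S$, so $\max(a,b)>\min S$ and the swap leaves $\textsf{Rlwmin}$ unchanged.
\end{itemize}
With this in place your invariant holds, and the rest of your induction goes through.
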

\begin{proof}
Let $w\in\mathfrak{S}_{\alpha}$.
During the construction of $\Gamma_{r\textsf{den}}(w)$,
suppose that the operations 
$T_{k_{1}}^{r}$, $T_{k_{2}}^{r}$, $\ldots,$ $T_{k_{N}}^{r}$ are applied in this order.
Let
$${\beta_{1} \choose \gamma_{1}}={\overline{w} \choose w}\text{~and~}
{\beta_{i+1} \choose \gamma_{i+1}}=T_{k_{i}}^{r}{\beta_{i} \choose \gamma_{i}}\text{~for~}1\leqslant i\leqslant N.$$
Consider the bottom rows $$\gamma_{1},\gamma_{2},\gamma_{3},\ldots,\gamma_{N+1},$$
where $\gamma_{1}=w$, $\gamma_{N+1}=H_{r\textsf{den}}(w)$.
Clearly, for $1\leqslant i\leqslant N$, 
we have either $\gamma_{i+1}=\gamma_{i}$ or $\gamma_{i+1}$ is obtained from $\gamma_{i}$ by interchanging two consecutive entries.
We claim that
$$
\textsf{Rlwmin}(\gamma_{1})=\textsf{Rlwmin}(\gamma_{2})=\cdots=\textsf{Rlwmin}(\gamma_{N+1}).
$$
This will prove the proposition.
To prove this claim, it suffices to show that
\begin{align}\label{eq-Rlwmin-gamma_t}
	\textsf{Rlwmin}(\gamma_{t})=\textsf{Rlwmin}(\gamma_{t+1}),
\end{align}
for each $1\leqslant t\leqslant N$.
The case $\gamma_t=\gamma_{t+1}$ is clear.
So assume that
$$\gamma_{t}=b_{1}b_{2}\ldots b_{n}
\quad\text{and}\quad\gamma_{t+1}=b_{1}b_{2}\ldots b_{j-1}b_{j+1}b_{j}b_{j+2}\ldots b_{n}.$$
Since $\gamma_t$ and $\gamma_{t+1}$ differ only by swapping two adjacent letters,
it is enough to compare the contributions of $b_j$ and $b_{j+1}$ to the multisets
$\textsf{Rlwmin}(\gamma_t)$ and $\textsf{Rlwmin}(\gamma_{t+1})$.
By definition, 
\begin{align*}
  \left(
 \begin{matrix}
  \beta_{t+1} \\
   \gamma_{t+1}
  \end{matrix}
  \right)=T_{k_{t}}^{r}  \left(
 \begin{matrix}
  \beta_{t} \\
   \gamma_{t}
  \end{matrix}
  \right)&=
 \left(
 \begin{matrix}
   \ast & \ast& \ldots&\ast\\
   b_{1} & b_{2}& \ldots&b_{j-1}
  \end{matrix}
  \right)
  T^{r}
  \left(
 \begin{matrix}
   x & y \\
   b_{j} & b_{j+1}
  \end{matrix}
  \right)
  \left(
 \begin{matrix}
   \ast &  \ldots&\ast\\
   b_{j+2} &  \ldots&b_{n}
  \end{matrix}
  \right),\\
  &= \left(
 \begin{matrix}
   \ast & \ast& \ldots&\ast\\
   b_{1} & b_{2}& \ldots&b_{j-1}
  \end{matrix}
  \right)
  \left(
 \begin{matrix}
   y & x \\
   b_{j+1} & b_{j}
  \end{matrix}
  \right)
  \left(
 \begin{matrix}
   \ast &  \ldots&\ast\\
   b_{j+2} &  \ldots&b_{n}
  \end{matrix}
  \right),
  \end{align*}
 where $j=k_{t}$.
From the construction of $\Gamma_{r\textsf{den}}(w)$, 
we note the following two facts:

(i)~ $x< y$. Then $\rrbracket x+r-1, y+r-1 \rrbracket=\{x+r,x+r+1,\ldots,y+r-1\}$;

(ii) $x\in\{b_{j+2},b_{j+3},\ldots,b_{n}\}$.

\noindent Since $$T^{r}
\left(
\begin{matrix}
	x & y \\
	b_{j} & b_{j+1}
\end{matrix}
\right) =
\left(
\begin{matrix}
	y & x \\
	b_{j+1} & b_{j}
\end{matrix}
\right),$$ 
we see that exactly one of $b_{j}$ and $b_{j+1}$ lies in $\rrbracket x+r-1, y+r-1 \rrbracket=\{x+r,x+r+1,\ldots,y+r-1\}$.

\noindent \textbf{Case 1:} Suppose that $b_{j}\in\{x+r,\ldots,y+r-1\}$ and $b_{j+1}\notin\{x+r,\ldots,y+r-1\}$.
The conditions $b_j>x$ and $x\in\{b_{j+2},b_{j+3},\ldots,b_n\}$ imply that
the letter $b_j$ does not contribute to $\textsf{Rlwmin}(\gamma_t)$ or to $\textsf{Rlwmin}(\gamma_{t+1})$.
For $b_{j+1}$, we consider the following two possibilities:
\begin{itemize}
\item[$\bullet$]If $b_{j+1}\geqslant y+r$, then $b_{j+1}>x$.	
Since $x\in\{b_{j+2},b_{j+3},\ldots,b_n\}$, the letter $b_{j+1}$ does not contribute to
$\textsf{Rlwmin}(\gamma_t)$ or to $\textsf{Rlwmin}(\gamma_{t+1})$.
Therefore,
$
\textsf{Rlwmin}(\gamma_t)=\textsf{Rlwmin}(\gamma_{t+1}).
$	
\item[$\bullet$]If $b_{j+1}< x+r$, then $b_{j+1}<b_{j}$.	
In this case, swapping $b_j$ and $b_{j+1}$ does not affect whether the letter $b_{j+1}$
contributes to the multiset of weak right-to-left minima.
Thus,
$
\textsf{Rlwmin}(\gamma_t)=\textsf{Rlwmin}(\gamma_{t+1}).
$
\end{itemize}

\noindent \textbf{Case 2:} 
Suppose that $b_{j+1}\in\{x+r,\ldots,y+r-1\}$ and $b_j\notin\{x+r,\ldots,y+r-1\}$.
This case is symmetric to Case 1 after interchanging the roles of $b_j$ and $b_{j+1}$.
Therefore, $\textsf{Rlwmin}(\gamma_t)=\textsf{Rlwmin}(\gamma_{t+1})$.

This completes the proof of (\ref{eq-Rlwmin-gamma_t}).
\end{proof}

\section{Equidistribution of $(\textsf{des},\textsf{mak},\textsf{mad})$  and 
$(\textsf{exc},\textsf{den},\textsf{inv})$ on $\mathfrak{S}_{\alpha,R}$}\label{Section-tri}
The Mahonian permutation statistic \textsf{mak} was introduced by Foata and Zeilberger \cite{Foata-Zeilberger-1990}.
Clarke, Steingr\'{\i}msson, and Zeng \cite{Clarke-1997} extended it to words.
In the same paper, they also introduced a new Mahonian statistic  \textsf{mad} on words,
and proved that the triples $(\textsf{des},\textsf{mak},\textsf{mad})$  and 
$(\textsf{exc},\textsf{den},\textsf{inv})$ are equidistributed on
$\mathfrak{S}_{\alpha}$ by exhibiting a bijection
on $\mathfrak{S}_{\alpha}$,
which we denote by $\Phi_{\alpha}$,  
that sends $(\textsf{des},\textsf{mak},\textsf{mad})$ to $(\textsf{exc},\textsf{den},\textsf{inv})$.
The goal of this section is to establish the equidistribution of $(\textsf{des},\textsf{mak},\textsf{mad})$  and 
$(\textsf{exc},\textsf{den},\textsf{inv})$ on $\mathfrak{S}_{\alpha,R}$
by proving that the bijection $\Phi_{\alpha}$ preserves the multiset of weak right-to-left minima.

The statistic \textsf{den} is defined in (\ref{def-den-Han}). 
We now give the definitions of \textsf{mak} and \textsf{mad}.
Before doing so, we need some notions.
Let $w=w_{1}w_{2}\ldots w_{n}\in\mathfrak{S}_{\alpha}$.
The \emph{height} of a letter $a$ in $w$, denoted by $h(a)$,  
is one more than the number of letters in $w$ that are strictly smaller than $a$.
The \emph{value} of the $i$-th letter in $w$, denoted by $v_{i}$,
is defined by
$$v_{i}=h(w_{i})+l(i),$$
where $l(i)$ is the number of letters in $w$ that are to the left of $w_{i}$ and
equal to $w_{i}$.
For example, 
if $w=21144231$,
then $\overline{w}=11122344$,
so the heights of $1,2,3,4$ are, respectively, $1,4,6,7$.
The values of the letters of $w$ are given by $4,1,2,7,8,5,6,3$ in the order in which they appear in $w$.
It is easy to see that $v_{1}v_{2}\ldots v_{n}=\text{std}(w)$.

Let $w=w_{1}w_{2}\ldots w_{n}\in\mathfrak{S}_{\alpha}$.
Recall that a descent of $w$ is a position $i$ such that $w_{i}>w_{i+1}$.
If $i$ is a descent of $w$, 
then $w_{i}$ is called a \emph{descent top}, and $w_{i+1}$ a \emph{descent bottom}.
Recall also that an excedance of $w$ is a position $i$ such that $w_{i}>a_{i}$, where $\overline{w}=a_{1}a_{2}\ldots a_{n}$.  
If $i$ is an excedance of $w$, 
then $w_{i}$ is called a \emph{excedance top}, and $a_{i}$ a \emph{excedance bottom}.

The \emph{descent tops sum} of $w$, denoted by $\text{Dtop}(w)$,
is the sum of the heights of the descent tops of $w$.
The \emph{descent bottoms sum} of $w$,
denoted by $\text{Dbot}(w)$, is the sum of the values of the descent bottoms of $w$.
The \emph{descent difference} of $w$ is
$$\textsf{Ddif}(w)=\textsf{Dtop}(w)-\textsf{Dbot}(w).$$

Given a word $w=w_{1}w_{2}\ldots w_{n}$,
we separate $w$ into its \emph{descent blocks} by putting in dashes between $w_{i}$ and $w_{i+1}$ whenever $w_{i}\leqslant w_{i+1}$.
A maximal  contiguous subword of $w$ which lies between two dashes is a \emph{descent block}.
A descent block is an \emph{outsider} if it has only one letter;
otherwise, it is a \emph{proper} descent block.
The leftmost letter of a proper descent block is its \emph{closer} and the rightmost letter is its \emph{opener}.
Let $B$ be a proper descent block of the word $w$ and let $\text{C}(B)$ and $\text{O}(B)$
be the closer and opener of $B$, respectively.
If $a$ is a letter of $w$,
we say that $a$ is \emph{embraced by} $B$ if $\text{C}(B)\geqslant a> \text{O}(B)$.

The \emph{embracing numbers} of a word $w=w_{1}w_{2}\ldots w_{n}$ are the numbers $e_{1},e_{2},\ldots,e_{n}$ where $e_{i}$ is the number of descent blocks in $w$ that are strictly to the right of $w_{i}$ and that embrace $w_{i}$.
The \emph{embracing sum} of $w$, denoted by $\textsf{Res}(w)$, is defined by
$$\textsf{Res}(w)=e_{1}+e_{2}+\cdots+ e_{n}.$$
Define
\begin{align*}
\textsf{mak}(w)&=\textsf{Dbot}(w)+\textsf{Res}(w),\\
\textsf{mad}(w)&=\textsf{Ddif}(w)+\textsf{Res}(w).
\end{align*}
	\vskip -2mm

In \cite{Clarke-1997}, Clarke, Steingr\'{\i}msson, and Zeng first defined a bijection $\Phi$ on permutations and then extended it to words.
We begin with an overview of the bijection $\Phi$ on permutations.

Given a permutation $\pi\in\mathfrak{S}_{n}$,
we first construct two biwords,
$(\begin{smallmatrix} f\\ f^{\prime} \end{smallmatrix})$ and $(\begin{smallmatrix} g\\ g^{\prime} \end{smallmatrix})$,
and then form the  biword
$
\left(
 \begin{smallmatrix}
   f&g\\
   f^{\prime}&g^{\prime}
  \end{smallmatrix}
  \right)
$
by concatenating $f$ and $g$, and $f^{\prime}$ and $g^{\prime}$, respectively.
The word $f$ is defined as the subword of descent bottoms in $\pi$, ordered increasingly,
and the word $g$ is defined as the subword of non-descent bottoms in $\pi$, also ordered increasingly.
The word $f^{\prime}$ is the subword of descent tops in $\pi$,
ordered so that for any letter $x$ in $f^{\prime}$,
there are exactly $d$ letters in $f^{\prime}$ that are on the left of $x$ and that are greater than $x$,
where $d$ is the embracing number of the letter $x$ in $\pi$.
The word $g^{\prime}$ is the subword of non-descent tops in $\pi$,
ordered so that for any letter $x$ in $g^{\prime}$,
there are exactly $d$ letters in $g^{\prime}$ that are on the right of $x$ and that are smaller than $x$,
where $d$ is the embracing number of the letter $x$ in $\pi$.
Rearranging the columns of 
$
 \left(
 \begin{smallmatrix}
   f&g\\
   f^{\prime}&g^{\prime}
  \end{smallmatrix}
  \right)
$,
so that the top row is in increasing order,
then let $\Phi(\pi)$ be the bottom row  of the rearranged biword.

By the procedure of the construction of $\Phi$, we have the following result.
\begin{lemma}\label{lemma-CSZ}
The letters appearing in $f$ and $f'$ are precisely the excedance bottoms and excedance tops of $\Phi(\pi)$, respectively.	
\end{lemma}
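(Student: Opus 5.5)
We need to prove Lemma \ref{lemma-CSZ}: the letters of $f$ are exactly the excedance bottoms of $\Phi(\pi)$, and the letters of $f'$ are exactly the excedance tops. The plan is to read this off the biword $\left(\begin{smallmatrix} f&g\\ f'&g'\end{smallmatrix}\right)$ column by column. After the columns are sorted by their top entries, the top row is $12\ldots n$ and the bottom row is $\Phi(\pi)$. So a column with top entry $a$ and bottom entry $b$ gives an excedance of $\Phi(\pi)$ at position $a$ precisely when $b>a$; in that case $a$ is the excedance bottom and $b$ the excedance top. It therefore suffices to show two things: every column of $\left(\begin{smallmatrix} f\\ f'\end{smallmatrix}\right)$ has bottom entry strictly greater than top entry, and every column of $\left(\begin{smallmatrix} g\\ g'\end{smallmatrix}\right)$ has bottom entry at most its top entry.

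\textbf{Columns from $f$ and $f'$.} Let $f=x_1<x_2<\cdots<x_d$ be the descent bottoms and $f'=y_1\ldots y_d$ the descent tops, arranged according to their embracing numbers. I would prove $y_i>x_i$ for each $i$ by counting. The descent tops greater than $x_i$ include the descent tops of the descents whose bottoms are $x_i,x_{i+1},\ldots,x_d$, since each such top exceeds its own bottom, which is at least $x_i$. Hence at least $d-i+1$ of the $y$'s exceed $x_i$. The remaining task is to show that the arrangement rule, which places each $y$ with exactly $e(y)$ larger $y$'s to its left, puts a letter larger than $x_i$ in slot $i$. I would derive this from the bound $e(y)\le$ (number of descent tops greater than $y$), together with a Hall-type or insertion argument: build $f'$ by inserting the tops in decreasing order, putting each one in slot $e(y)+1$. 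I expect this to be the main obstacle, namely checking that this insertion never places a small top under a large bottom. If a clean proof does not come out, I would instead invoke the construction in \cite{Clarke-1997}, where CSZ verify exactly these inequalities in order to get $\textsf{exc}(\Phi(\pi))=\textsf{des}(\pi)$.

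\textbf{Columns from $g$ and $g'$.} These are handled symmetrically. Here $g$ lists the non-descent bottoms (including $\pi_1$) in increasing order, and $g'$ lists the non-descent tops (including $\pi_n$), arranged with exactly $e(y)$ smaller letters to the right. The same counting, with the inequalities reversed, gives $g'_i\le g_i$. Combining the two parts, the excedance positions of $\Phi(\pi)$ are exactly the letters of $f$, with excedance tops the letters of $f'$, which proves the lemma. For words, the same statement holds once the letters are replaced by their values (standardization), since $\Phi_\alpha$ is defined through this permutation version.
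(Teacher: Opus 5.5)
Your reduction is right. After sorting, the top row is $12\ldots n$, so the lemma is equivalent to $f'_i>f_i$ for every column of $f/f'$ and $g'_i\leqslant g_i$ for every column of $g/g'$. If you simply fall back on \cite{Clarke-1997}, you are doing exactly what the paper does: its proof is only a pointer to the proof of Proposition 3 there.

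Your self-contained argument, however, has a genuine gap at the step you flag, and the ingredient you propose cannot close it. You have two facts: at least $d-i+1$ tops exceed $x_i$, and $e(y)\leqslant\#\{\text{descent tops}>y\}$. Both are compatible with the choice $e(y)=\#\{\text{descent tops}>y\}$ for every $y$, which makes $f'$ decreasing. For $\pi=16327458$ (tops $3,6,7$, $f=234$) this would give $f'=763$. The column with top $4$ and bottom $3$ is then not an excedance; the true $f'$ is $376$. So no Hall-type or insertion argument that uses only these facts can succeed. You need a bound on $e(y)$ that also accounts for the descent bottoms.

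The missing idea is the following. Take a proper block strictly to the right of $y$ that embraces $y$. It decreases strictly from its closer, which is $\geqslant y$, to its opener, which is $<y$, and it does not contain $y$. So it contains a descent $a>b$ with $a>y>b$, and different blocks give different descents. Hence $e(y)$ is at most the number of descents straddling $y$ in this sense.

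Distinct descents have distinct tops and distinct bottoms. For a descent top $y$, the straddling count therefore equals $\#\{\text{tops}>y\}-\#\{\text{bottoms}\geqslant y\}$. In $f'$, $y$ has exactly $\#\{\text{tops}>y\}-e(y)$ larger letters to its right, hence at least $\#\{\text{bottoms}\geqslant y\}$ letters to its right. So its slot $s$ satisfies $s\leqslant\#\{\text{bottoms}<y\}$, and therefore $f_s<y$.

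For a non-descent top $y$, the straddling count equals $\#\{\text{non-descent tops}<y\}-\#\{\text{non-descent bottoms}<y\}$. In $g'$, $y$ has exactly $\#\{\text{non-descent tops}<y\}-e(y)$ smaller letters to its left, hence at least $\#\{\text{non-descent bottoms}<y\}$ letters to its left. It follows that $g_s\geqslant y$. This proves both halves directly, and it shows why the $g$-side is "symmetric" only once this refined bound is in hand.

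Your final remark about words is not needed, since the lemma concerns only $\pi$ and $\Phi(\pi)$. It would also not be automatic, because $\operatorname{istd}_\alpha$ can merge a strict inequality $x<y$ into equal letters.
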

\begin{proof}
This follows directly from the construction of $\Phi$; see the proof of Proposition 3 in \cite{Clarke-1997}.
\end{proof}

For a fixed composition $\alpha$, 
we extend $\Phi$ to a map on $\mathfrak{S}_{\alpha}$.
For $w\in\mathfrak{S}_{\alpha}$,
recall that $\text{std}(w)$ is the permutation in $\mathfrak{S}_{n}$  obtained by replacing, 
in the order of their appearance from left to right,
the $\alpha_{1}$ occurrences of the letter $1$ by $1,2,\ldots, \alpha_{1}$,
the $\alpha_{2}$ occurrences of the letter $2$ by $\alpha_{1}+1,\alpha_{1}+2,\ldots, \alpha_{1}+\alpha_{2}$, 
and similarly for the remaining letters.
Recall also that 
for a permutation $\pi\in\mathfrak{S}_{n}$,
$\text{istd}_{\alpha}(\pi)$ is the word in $\mathfrak{S}_{\alpha}$ obtained from $\pi$ by replacing each of $1,2,\ldots,\alpha_{1}$ with $1$,
each of $\alpha_{1}+1,\alpha_{1}+2,\ldots,\alpha_{1}+\alpha_{2}$ with $2$, and so on.
For $w\in\mathfrak{S}_{\alpha}$, we define  
$$\Phi_{\alpha}(w)=\text{istd}_{\alpha}\circ\Phi\circ\text{std}(w).$$

\begin{example}
\emph{Let $\alpha=(2,3,3)$. Consider the word
$$w=1~3~2~1~3~2~2~3\in\mathfrak{S}_{\alpha}.$$
Then
$$\pi=\operatorname{std}(w)=1-6~3~2-7~4-5-8.$$
One checks that
$$
 \left(
 \begin{matrix}
   f\\
   f^{\prime}
  \end{matrix}
  \right)=
  \left(
 \begin{matrix}
   2&3&4\\
   3&7&6
  \end{matrix}
  \right),~~~~
 \left(
 \begin{matrix}
   g\\
   g^{\prime}
  \end{matrix}
  \right)=
  \left(
 \begin{matrix}
   1&5&6&7&8\\
   1&2&4&5&8
  \end{matrix}
  \right).
$$
Hence,
$$
 \left(
 \begin{matrix}
   f&g\\
   f^{\prime}&g^{\prime}
  \end{matrix}
  \right)=
  \left(
 \begin{matrix}
   2&3&4&1&5&6&7&8\\
   3&7&6&1&2&4&5&8
  \end{matrix}
  \right)\longrightarrow
  \left(
 \begin{matrix}
   1&2&3&4&5&6&7&8\\
   1&3&7&6&2&4&5&8
  \end{matrix}
  \right).$$
Therefore,
  $$\Phi(\pi)=1~3~7~6~2~4~5~8.$$
Thus,
$$\Phi_{\alpha}(w)=\operatorname{istd}_{\alpha}(\Phi(\pi))=1~2~3~3~1~2~2~3.$$}
\end{example}

\begin{theorem}[Clarke--Steingr\'{\i}msson--Zeng \cite{Clarke-1997}]\label{Clarke-1997}
$\Phi_{\alpha}:\mathfrak{S}_{\alpha}\rightarrow\mathfrak{S}_{\alpha}$ is a bijection
satisfying
$$(\emph{\textsf{des}},\emph{\textsf{mak}},\emph{\textsf{mad}})w=
(\emph{\textsf{exc}},\emph{\textsf{den}},\emph{\textsf{inv}})
\Phi_{\alpha}(w)$$ for all $w\in\mathfrak{S}_{\alpha}$.
\end{theorem}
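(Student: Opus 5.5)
The plan is to reduce everything to the permutation case through the maps $\operatorname{std}$ and $\operatorname{istd}_{\alpha}$, and then control what happens to equal letters. Let $\mathfrak{S}_n^{\alpha}=\operatorname{std}(\mathfrak{S}_{\alpha})$. By Lemma \ref{lemma-std}, this is exactly the set of permutations in which, for each $i$, the values of the $i$-th class (the block $s_{i-1}+1,\ldots,s_i$) appear from left to right in increasing order. On $\mathfrak{S}_n^{\alpha}$ the map $\operatorname{istd}_{\alpha}$ is the inverse of $\operatorname{std}$.

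\emph{Step 1: the permutation case.} First I would establish the permutation case of Theorem \ref{Clarke-1997}: $\Phi$ is a bijection of $\mathfrak{S}_n$ with $(\textsf{des},\textsf{mak},\textsf{mad})\pi=(\textsf{exc},\textsf{den},\textsf{inv})\Phi(\pi)$.
\begin{itemize}
\item \textsf{des} to \textsf{exc}: this is Lemma \ref{lemma-CSZ}, since the descent bottoms/tops of $\pi$ become the excedance bottoms/tops of $\Phi(\pi)$.
\item \textsf{mak} to \textsf{den}: write $\textsf{den}$ as $\sum_{i\in\textsf{Exc}}i$ plus the inversion numbers of the excedance and non-excedance subwords. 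The excedance subword is $f'$ and the non-excedance subword is $g'$, read in the order of the sorted columns. The ordering rules for $f'$ and $g'$ are designed so that the embracing numbers count exactly these inversions. Meanwhile $\sum_{i\in\textsf{Exc}}i$ equals $\textsf{Dbot}$, because the excedance positions are the descent bottoms (values equal positions for permutations).
\item \textsf{mad} to \textsf{inv}: use $\textsf{inv}=\textsf{den}-(\text{correction by excedance heights})$ and $\textsf{mad}=\textsf{mak}+\textsf{Dtop}-2\textsf{Dbot}$. Then the same column bookkeeping should give the third coordinate.
\item Bijectivity: reconstruct $\pi$ from $\Phi(\pi)$. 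Read off $f,f'$ (excedance columns) and $g,g'$. The inversion counts inside $f'$ and $g'$ recover the embracing numbers. These numbers let one reinsert descent tops into descent blocks, in the manner of a Lehmer-code inversion.
\end{itemize}

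\emph{Step 2: transfer to words.} Next I would show that $\Phi$ maps $\mathfrak{S}_n^{\alpha}$ into a set on which $\operatorname{istd}_{\alpha}$ is injective and compatible with the target statistics. The precise statements I would aim for are:
\begin{itemize}
\item[(a)] For $\tau=\Phi(\operatorname{std}(w))$, $\textsf{exc}(\operatorname{istd}_{\alpha}\tau)=\textsf{exc}(\tau)$, $\textsf{den}(\operatorname{istd}_{\alpha}\tau)=\textsf{den}(\tau)$, and $\textsf{inv}(\operatorname{istd}_{\alpha}\tau)=\textsf{inv}(\tau)$.
\item[(b)] $\textsf{des},\textsf{mak},\textsf{mad}$ of $w$ agree with those of $\operatorname{std}(w)$.
\end{itemize}
For (b), \textsf{des} and \textsf{mak} are standard. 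For \textsf{mad}, which is not standard (Remark \ref{remark-reason}), I would check directly that heights of tops and values of bottoms in $w$ coincide with the corresponding entries of $\operatorname{std}(w)$. This needs the facts that a descent top is never followed by an equal letter in its block, and that embracing is unaffected by standardization of ties.

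For (a), the key lemma is a tie-compatibility property: within a class, no excedance of $\tau$ is created or destroyed by merging values, and no inversions occur among equal letters. Concretely, if two values $p<p'$ lie in the same class, then $p$ appears left of $p'$ in $\tau$, and a position with top $a$ and bottom $\tau_i$ in the same class is not an excedance. I expect to obtain this from the ordering rules for $f'$ and $g'$. Equal letters in a class of $\operatorname{std}(w)$ are increasing left to right, so they never form a descent top/bottom pair, and they have weakly monotone embracing numbers. This should force them to sit in increasing order in $g'$ and in $f'$.

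\emph{Step 3: bijectivity on words.} Combining (a) with the inverse from Step 1 gives $\Phi(\mathfrak{S}_n^{\alpha})\subseteq\mathfrak{S}_n^{\alpha}$. Since $\Phi$ is injective and the set is finite, $\Phi$ is a bijection of $\mathfrak{S}_n^{\alpha}$. Hence $\Phi_{\alpha}=\operatorname{istd}_{\alpha}\circ\Phi\circ\operatorname{std}$ is a bijection of $\mathfrak{S}_{\alpha}$, and (a), (b) transport the identity of statistics.

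The main obstacle is the tie-compatibility lemma in Step 2, especially for \textsf{den}. Weak inversions $\textsf{imv}$ are used on the excedance letters while strict inversions are used on the others, so I must verify that equal excedance tops appear in increasing standardized order, which contributes no spurious weak inversions. I would first test this on small $\alpha$ such as $(2,3,3)$ in the Example, then prove it by a swapping argument on adjacent equal values.
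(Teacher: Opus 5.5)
The paper does not reprove this theorem; it quotes it from Clarke--Steingr\'{\i}msson--Zeng. Your Step~1 is a reasonable outline of the permutation case, though two points are thin. The third coordinate needs the identity $\textsf{inv}(\tau)=\sum_{i\in\textsf{Exc}(\tau)}(\tau_i-i)+\textsf{inv}(f')+\textsf{inv}(g')$ for $\tau=\Phi(\pi)$, and the inverse map is only gestured at.

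The real problem is that Steps~2--3 rest on claims that are false. Take $\alpha=(1,2)$ and $w=221$. Then $\operatorname{std}(w)=231$, and the construction gives $f=1$, $f'=3$, $g=23$, $g'=21$ (the letter $2$ is embraced by the block $31$). So $\Phi(231)=321$ and $\Phi_{\alpha}(w)=221$.

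(i) $321\notin\operatorname{std}(\mathfrak{S}_{(1,2)})=\{123,213,231\}$, because the same-class values $2,3$ occur as $3$ before $2$. So your tie-compatibility lemma fails, and so does the inclusion $\Phi(\mathfrak{S}_n^{\alpha})\subseteq\mathfrak{S}_n^{\alpha}$ used in Step~3.

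(ii) $\textsf{mad}(221)=2-1+1=2$ while $\textsf{mad}(231)=3-1+1=3$: the descent top $2$ has height $2$ but standardized value $3$. So (b) fails for \textsf{mad}, as it must, since Remark~\ref{remark-reason} says \textsf{mad} is not standard. In general $\textsf{mad}(\operatorname{std}w)-\textsf{mad}(w)=\sum_{p}l(p)$, summed over the descent tops $p$ of $w$. This is because \textsf{Dbot} and \textsf{Res} are standard, while heights and values differ by $l(p)$.

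(iii) $\textsf{inv}(221)=2\neq3=\textsf{inv}(321)$, so (a) fails for \textsf{inv}.

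The theorem holds only because the discrepancies in (ii) and (iii) cancel, and nothing in your plan accounts for that.

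The missing idea is an analysis of how $\Phi$ orders values of the same class.

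\emph{Order of same-class letters.} Suppose two occurrences of a letter in $w$ are both descent tops. The descent block containing the right one embraces that letter, so their embracing numbers strictly decrease from left to right. The inversion-table rule for $f'$ then places them in \emph{decreasing} order. Same-class non-descent tops have weakly decreasing embracing numbers, so they land in increasing order in $g'$.

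\emph{Excedances survive $\operatorname{istd}_{\alpha}$.} You also need a class-level form of Lemma~\ref{lemma-CSZ}: each excedance bottom of $\Phi(\operatorname{std}w)$ lies in a strictly smaller class than its top. This follows by counting descent bottoms against descent tops below the least value of the top's class. It guarantees that $\operatorname{istd}_{\alpha}$ does not change \textsf{Exc}.

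\emph{The cancellation.} With these facts, the same-class inverted pairs of $\Phi(\operatorname{std}w)$ are exactly the pairs whose larger member is an excedance top. There are $\sum_p l(p)$ of them, which gives $\textsf{mad}(w)=\textsf{inv}(\Phi_{\alpha}(w))$.

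\emph{Your expectation for \textsf{den} is backwards.} For $w=2121$, $\Phi(3142)=4312$ has excedance subword $43$, while $\Phi_{\alpha}(w)=2211$ has excedance subword $22$. The strict inversion $4>3$ becomes exactly the weak inversion counted by \textsf{imv}. That is why \textsf{den} uses \textsf{imv} on excedance letters: equal excedance tops are in decreasing, not increasing, standardized order.

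\emph{Bijectivity.} Because $\Phi$ does not stabilize $\operatorname{std}(\mathfrak{S}_{\alpha})$, the finiteness argument of Step~3 is unavailable. You need a separate proof that $\operatorname{istd}_{\alpha}$ is injective on $\Phi(\operatorname{std}(\mathfrak{S}_{\alpha}))$, for instance by inverting the construction directly on words.
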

To prove that the bijection $\Phi_{\alpha}$ preserves the multiset of weak right-to-left minima, we give three lemmas.
\begin{lemma}\label{lemma-CSZ-Rlwmin-subset-perm}
	For any $\pi\in\mathfrak{S}_{n}$,
	we have $\emph{\textsf{Rlmin}}(\pi)\subseteq\emph{\textsf{Rlmin}}(\Phi(\pi))$.
\end{lemma}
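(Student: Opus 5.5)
The plan is to read off the right-to-left minima of $\Phi(\pi)$ directly from the biword $\left(\begin{smallmatrix} f&g\\ f'&g'\end{smallmatrix}\right)$. After the columns are sorted by their top entries, the top entry of each column is its position in $\Phi(\pi)$. So a letter $x$ is a right-to-left minimum of $\Phi(\pi)$ if and only if every column whose top entry is larger than the top entry $p$ of the column with bottom $x$ has bottom entry larger than $x$. Fix $x\in\textsf{Rlmin}(\pi)$. I will verify this condition for the column $\left(\begin{smallmatrix} p\\ x\end{smallmatrix}\right)$ containing $x$ in its bottom row.

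First I record the local consequences of $x\in\textsf{Rlmin}(\pi)$. Every letter to the right of $x$ in $\pi$ is larger than $x$. Hence $x$ is not a descent top, so $x$ lies in $g'$. Moreover, any descent block strictly to the right of $x$ has opener $>x$, so it cannot embrace $x$, and the embracing number of $x$ is $0$. By the defining rule for $g'$, no letter of $g'$ to the right of $x$ is smaller than $x$. Since $g$ is increasing, the $g$-columns with top entry larger than $p$ are exactly the $g$-columns to the right of $x$'s column, and their bottoms are therefore all larger than $x$. This handles the $g$-part.

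It remains to treat the $f$-columns $\left(\begin{smallmatrix} q\\ y\end{smallmatrix}\right)$ with $q>p$. By Lemma \ref{lemma-CSZ}, each such column is an excedance column of $\Phi(\pi)$, so $y>q>p$. Thus it suffices to prove $p\geqslant x$: then $y>p\geqslant x$. Here $p=g_k$ is the $k$-th smallest non-descent bottom of $\pi$, where $x$ is the $k$-th entry of $g'$. By the previous paragraph, the $k-1$ entries of $g'$ to the left of $x$ are all smaller than $x$. Since $x$ is itself a non-descent top, $k$ is exactly the number of non-descent tops of $\pi$ lying in $[x]$.

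The main obstacle is the counting inequality $g_k\geqslant x$. It is equivalent to saying that the number of non-descent bottoms in $[x-1]$ is at most the number of non-descent tops in $[x-1]$. Equivalently, the number of descent tops in $[x-1]$ is at most the number of descent bottoms in $[x-1]$. I will prove this by pairing each descent top $\pi_i<x$ with its descent bottom $\pi_{i+1}<\pi_i<x$, which gives an injection from descent tops in $[x-1]$ into descent bottoms in $[x-1]$. In fact this pairing argument appears to work for any $x$, and the rlmin hypothesis is needed only for the $g'$-ordering step above. If there are boundary subtleties with the convention for $\pi_1$ (as a non-descent bottom) or $\pi_n$ (as a non-descent top), I would recheck the count on the columns $\left(\begin{smallmatrix}\pi_1\\ \ast\end{smallmatrix}\right)$ and $\left(\begin{smallmatrix}\ast\\ \pi_n\end{smallmatrix}\right)$ separately.
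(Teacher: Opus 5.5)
Your overall architecture matches the paper's proof. You split the columns of $\Phi(\pi)$ into the $g$-part and the $f$-part, use $e(x)=0$ and the ordering rule for $g'$ on the first part, and use Lemma \ref{lemma-CSZ} on the second. The problem is your treatment of $p\geqslant x$. You claim that the $k-1$ entries of $g'$ to the left of $x$ are all smaller than $x$, so that $k$ equals the number of non-descent tops in $[x]$. This does not follow, and it is false in general. The condition $e(x)=0$ only says that no \emph{smaller} letter of $g'$ lies to the right of $x$. A \emph{larger} non-descent top with positive embracing number can sit to the left of $x$. For example, take $\pi=1342$ and $x=2\in\textsf{Rlmin}(\pi)$. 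The non-descent tops are $1,3,2$ with embracing numbers $0,1,0$, since the block $42$ embraces $3$. Hence $g'=132$ and $k=3$, while only two non-descent tops lie in $[2]$. So the reformulation of $g_k\geqslant x$ that you derive from this equality is not justified as written.

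The damage is easy to repair. Your previous paragraph does give that every non-descent top smaller than $x$ lies to the left of $x$ in $g'$, so $k-1\geqslant\#\{\text{non-descent tops in }[x-1]\}$. Combine this with your pairing argument, which is correct: $\pi_i\mapsto\pi_{i+1}$ injects descent tops in $[x-1]$ into descent bottoms in $[x-1]$. This gives $\#\{\text{non-descent bottoms in }[x-1]\}\leqslant k-1$, hence $g_k\geqslant x$.

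The counting detour is unnecessary, though. Lemma \ref{lemma-CSZ}, which you already use for the $f$-columns, says the entries of $f$ are exactly the excedance positions of $\Phi(\pi)$. So the $g$-column $\binom{p}{x}$ is a non-excedance column, which means $\Phi(\pi)_p=x\leqslant p$ immediately. This is exactly how the paper finishes: for $b\in f'$ at position $j$ and $c$ at position $k$ it writes $j<b<c\leqslant k$. Your boundary worry is also moot, since descents only occur at $i\in[n-1]$ and the pairing needs no conventions for $\pi_1$ or $\pi_n$.
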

\begin{proof}
Assume that $\pi=\pi_{1}\pi_{2}\ldots\pi_{n}$
and $\Phi(\pi)=\pi^{\prime}=\pi^{\prime}_{1}\pi^{\prime}_{2}\ldots\pi^{\prime}_{n}$.
Given $c\in\textsf{Rlmin}(\pi)$, we will prove that $c\in\textsf{Rlmin}(\pi^{\prime})$.
Since $c\in\textsf{Rlmin}(\pi)$, 
it follows that in permutation $\pi$, 
$c$ is a non-descent top and the embracing number of the letter $c$ is $0$.
By the definition of $\Phi(\pi)$,
we see that 

(i) $c\in g^{\prime}$, and 

(ii) in $g^{\prime}$, there is no letter that is on the right of $c$ and that is smaller than $c$.

\noindent Below we prove $c\in\textsf{Rlmin}(\pi^{\prime})$ by showing that for any $b$ with $b<c$, 
the letter $b$ lies to the left of $c$ in $\pi^{\prime}$. 

$\bullet$~$b\in g^{\prime}$. 
By (i), (ii) and  the fact that $b<c$, 
we have that $b$ lies to the left of $c$ in $g^{\prime}$.
Since $g$ is increasing, the relative order of the letters in $g^{\prime}$ is preserved in $\pi^{\prime}$.
Hence $b$ lies to the left of $c$ in $\pi^{\prime}$.

$\bullet$~$b\in f^{\prime}$. 
By Lemma \ref{lemma-CSZ}, the letters of $f^{\prime}$ are precisely the excedance tops of $\pi^{\prime}$, 
and hence the letters of $g^{\prime}$ are precisely the non-excedance tops of $\pi^{\prime}$.
Assume that $\pi_{j}^{\prime}=b$ and $\pi_{k}^{\prime}=c$.
Since $b\in f^{\prime}$, 
the letter $b$ is an excedance top of $\pi^{\prime}$,
and hence $j<b$.
Since $c\in g^{\prime}$, 
the letter $c$ is a non-excedance top of $\pi^{\prime}$,
and hence $c\leqslant k$.
Therefore, $j<b<c\leqslant k$.
Thus, $b$ lies to the left of $c$ in $\pi^{\prime}$.
\end{proof}
In the remainder of this section, when we write $w_p\in\textsf{Rlwmin}(w)$, 
we mean that the occurrence of the letter $w_p$ at position $p$ is a weak right-to-left minimum of $w$.
Similarly, when we write $\{w_{p_{1}},\ldots,w_{p_{k}}\}\subseteq\textsf{Rlwmin}(w)$,
we mean that each of the occurrences $w_{p_{1}},\ldots,w_{p_{k}}$ at the corresponding positions
is a weak right-to-left minimum of $w$.
\begin{lemma}\label{lemma-CSZ-std-istdM}
(1) Let $w=w_{1}w_{2}\ldots w_{n}\in\mathfrak{S}_{\alpha}$,
and let $\operatorname{std}(w)=\pi=\pi_{1}\pi_{2}\ldots\pi_{n}\in\mathfrak{S}_{n}$.
We have $w_{p}\in\emph{\textsf{Rlwmin}}(w)$ if and only if $\pi_{p}\in\emph{\textsf{Rlmin}}(\pi)$.

(2) Let $\pi^{\prime}=\pi^{\prime}_{1}\pi^{\prime}_{2}\ldots\pi^{\prime}_{n}\in\mathfrak{S}_{n}$,
and let $\operatorname{istd}_{\alpha}(\pi^{\prime})=u=u_{1}u_{2}\ldots u_{n}\in\mathfrak{S}_{\alpha}$.
If $\pi_{p}^{\prime}\in\emph{\textsf{Rlmin}}(\pi^{\prime})$, then $u_{p}\in\emph{\textsf{Rlwmin}}(u)$.
\end{lemma}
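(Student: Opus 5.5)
Both parts follow directly from Lemma \ref{lemma-std}, which says that for $i<j$ we have $\tau_i<\tau_j$ if and only if $w_i\leqslant w_j$. The plan is to rewrite each (weak) right-to-left minimum condition as a statement about pairs of positions $p<j$, and then pass between $w$ and $\pi$ (or $\pi'$ and $u$) pair by pair.

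For part (1), let $\pi=\operatorname{std}(w)$. By definition, the occurrence $w_p$ is a weak right-to-left minimum of $w$ exactly when either $p=n$ or $w_p\leqslant w_j$ for every $j>p$. Likewise, $\pi_p\in\textsf{Rlmin}(\pi)$ exactly when either $p=n$ or $\pi_p<\pi_j$ for every $j>p$. For each fixed pair $p<j$, the second part of Lemma \ref{lemma-std} gives $\pi_p<\pi_j \iff w_p\leqslant w_j$. Taking the conjunction over all $j>p$ shows that the two conditions coincide, and the case $p=n$ is trivial. This proves the equivalence in (1).

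For part (2), let $u=\operatorname{istd}_{\alpha}(\pi')$. The only fact needed is that $\operatorname{istd}_{\alpha}:[n]\to[k+1]$ is weakly increasing, since it sends consecutive blocks of integers to $1,2,\ldots$ in order. Suppose $\pi'_p\in\textsf{Rlmin}(\pi')$. If $p=n$, then $u_p$ is the last letter and is automatically a weak right-to-left minimum. Otherwise $\pi'_p<\pi'_j$ for all $j>p$, and monotonicity gives $u_p=\operatorname{istd}_{\alpha}(\pi'_p)\leqslant \operatorname{istd}_{\alpha}(\pi'_j)=u_j$ for all $j>p$. Hence $u_p\in\textsf{Rlwmin}(u)$.

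The converse of (2) fails in general, because letters in different blocks of $\pi'$ can collapse to equal values of $u$. That is why (2) is stated only as an implication.

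There is no real obstacle here. The one point needing care is in (1): we must use the strict-versus-weak correspondence from Lemma \ref{lemma-std} in the ordered form, for $i<j$ only, rather than the unordered form, so that ties $w_p=w_j$ with $j>p$ are correctly counted as allowed.
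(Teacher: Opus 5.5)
Your proof is correct and follows the paper's approach: part (1) comes from the ordered form of Lemma~\ref{lemma-std} ($\tau_i<\tau_j \iff w_i\leqslant w_j$ for $i<j$), and part (2) comes from $\operatorname{istd}_{\alpha}$ being weakly increasing, so $\pi'_i<\pi'_j$ implies $u_i\leqslant u_j$. You simply write out the pairwise check over all $j>p$ (and the trivial case $p=n$) that the paper leaves implicit.
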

\begin{proof}
(1) By Lemma \ref{lemma-std}.

(2) By the definition of  $\text{istd}_{\alpha}$,
we see that if $\pi_{i}^{\prime}<\pi_{j}^{\prime}$, then $u_{i}\leqslant u_{j}$,
which yields the desired result.
\end{proof}

\begin{lemma}\label{lemma-CSZ-Rlwmin-subset}
For any $w\in\mathfrak{S}_{\alpha}$,
we have $\emph{\textsf{Rlwmin}}(w)\subseteq\emph{\textsf{Rlwmin}}(\Phi_{\alpha}(w))$as multisets.
\end{lemma}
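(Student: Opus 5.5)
The plan is to chain the three preceding lemmas along the factorization $\Phi_{\alpha}=\operatorname{istd}_{\alpha}\circ\Phi\circ\operatorname{std}$, working at the level of positions rather than values. Fix $w=w_{1}w_{2}\ldots w_{n}\in\mathfrak{S}_{\alpha}$, set $\pi=\operatorname{std}(w)$, $\pi^{\prime}=\Phi(\pi)$ and $u=\Phi_{\alpha}(w)=\operatorname{istd}_{\alpha}(\pi^{\prime})$. The goal is an injection from the occurrences contributing to $\textsf{Rlwmin}(w)$ into the occurrences contributing to $\textsf{Rlwmin}(u)$ that preserves the letter value.

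First, let $w_{p}\in\textsf{Rlwmin}(w)$. By Lemma \ref{lemma-CSZ-std-istdM}(1), $\pi_{p}\in\textsf{Rlmin}(\pi)$. Write $c=\pi_{p}$. Since $\operatorname{std}$ maps occurrences of the letter $i$ to the values in the $i$-th interval $\{\alpha_{1}+\cdots+\alpha_{i-1}+1,\ldots,\alpha_{1}+\cdots+\alpha_{i}\}$, we have $\operatorname{istd}_{\alpha}(c)=w_{p}$. Next, Lemma \ref{lemma-CSZ-Rlwmin-subset-perm} gives $c\in\textsf{Rlmin}(\pi^{\prime})$. Let $q$ be the position of $c$ in $\pi^{\prime}$, so that $\pi^{\prime}_{q}=c$. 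Then Lemma \ref{lemma-CSZ-std-istdM}(2) shows that $u_{q}\in\textsf{Rlwmin}(u)$, and $u_{q}=\operatorname{istd}_{\alpha}(c)=w_{p}$.

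It remains to define the map $w_{p}\mapsto u_{q}$. Since $p\mapsto c=\pi_{p}$ is injective and $c\mapsto q=(\pi^{\prime})^{-1}(c)$ is injective, distinct contributing occurrences of $w$ go to distinct contributing occurrences of $u$, and the letter value is preserved. Hence, for each letter $i$, the multiplicity of $i$ in $\textsf{Rlwmin}(w)$ is at most its multiplicity in $\textsf{Rlwmin}(u)$, which is the multiset inclusion.

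The only delicate point is this bookkeeping. The inclusion has to be proved occurrence by occurrence through the permutation values $c$, not merely on supports, so that multiplicities are respected. I expect no real obstacle beyond that, since the substantive work is in Lemma \ref{lemma-CSZ-Rlwmin-subset-perm}. Presumably equality will then follow afterwards from a counting argument, using that $\Phi_{\alpha}$ is a bijection of the finite set $\mathfrak{S}_{\alpha}$, but that is not needed for the inclusion stated here.
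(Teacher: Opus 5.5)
Your proof is correct and follows the paper's argument. Both proofs chain Lemma \ref{lemma-CSZ-std-istdM}(1), Lemma \ref{lemma-CSZ-Rlwmin-subset-perm} and Lemma \ref{lemma-CSZ-std-istdM}(2) along $p\mapsto c=\pi_p\mapsto q$, with $\mathrm{istd}_\alpha(c)=w_p$ preserving letter values. The only cosmetic difference is that the paper fixes $\{s^k\}$ and takes the rightmost $k$ occurrences of $s$, whereas you phrase it as a value-preserving injection on contributing occurrences, which handles multiplicities equally well.
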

\begin{proof}
    Assume that 
    \begin{align*}
    w=w_{1}w_{2}\ldots w_{n},\quad
    \text{std}(w)&=\pi=\pi_{1}\pi_{2}\ldots\pi_{n},\quad
    \Phi(\pi)=\pi^{\prime}=\pi^{\prime}_{1}\pi^{\prime}_{2}\ldots\pi^{\prime}_{n},
    \end{align*}
    and that
    $$\Phi_{\alpha}(w)=\text{istd}_{\alpha}(\pi^{\prime})=u=u_{1}u_{2}\ldots u_{n}.$$
	Let $\{s^{k}\}\subseteq\textsf{Rlwmin}(w)$. 
	We will prove that $\{s^{k}\}\subseteq\textsf{Rlwmin}(u)$.
	Let $w_{p_{1}},w_{p_{2}},\ldots,w_{p_{k}}$ be the rightmost $k$ occurrences of the letter $s$ in $w$, where $p_{1}<p_{2}<\cdots<p_{k}$.
	Since ${\{s^k\}\subseteq \mathsf{Rlwmin}(w)}$ and any occurrence of $s$ to the right of a weak right-to-left minimum equal to $s$ is again a weak right-to-left minimum, we have
    $$\{w_{p_{1}},w_{p_{2}},\ldots,w_{p_{k}}\}\subseteq\textsf{Rlwmin}(w).$$ 
    By Lemma \ref{lemma-CSZ-std-istdM} (1), we have $$\{\pi_{p_{1}},\pi_{p_{2}},\ldots,\pi_{p_{k}}\}\subseteq\textsf{Rlmin}(\pi).$$
    It follows from Lemma \ref{lemma-CSZ-Rlwmin-subset-perm} that $$\{\pi_{p_{1}},\pi_{p_{2}},\ldots,\pi_{p_{k}}\}\subseteq\textsf{Rlmin}(\pi^{\prime}).$$
    Let $q_{i}$ be such that $\pi^{\prime}_{q_{i}}=\pi_{p_{i}}$ for all $1\leqslant i\leqslant k$.
    By Lemma \ref{lemma-CSZ-std-istdM} (2),
    we have $$\{u_{q_{1}},u_{q_{2}},\ldots,u_{q_{k}}\}\subseteq\textsf{Rlwmin}(u).$$
Since $w_{p_i}=s$ and $\pi=\mathrm{std}(w)$, the letters $\pi_{p_i}$ correspond to the occurrences of the letter $s$ in $w$. Hence
\[
u_{q_i}=\mathrm{istd}_\alpha(\pi'_{q_i})
=\mathrm{istd}_\alpha(\pi_{p_i})
=s
\]
for all  $1\leqslant i\leqslant k$.
Therefore, $\{s^{k}\}\subseteq\textsf{Rlwmin}(u)$.
\end{proof}
\begin{proposition}\label{Prop-CSZ-keeps-Rlwmin}
For any $w\in\mathfrak{S}_{\alpha}$,
we have $\emph{\textsf{Rlwmin}}(w)=\emph{\textsf{Rlwmin}}(\Phi_{\alpha}(w))$.
\end{proposition}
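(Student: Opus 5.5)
By Lemma \ref{lemma-CSZ-Rlwmin-subset} we already have $\textsf{Rlwmin}(w)\subseteq\textsf{Rlwmin}(\Phi_{\alpha}(w))$ as multisets for every $w\in\mathfrak{S}_{\alpha}$. So the plan is to upgrade this one-sided inclusion to an equality by a counting (pigeonhole) argument over the finite set $\mathfrak{S}_{\alpha}$, using only that $\Phi_{\alpha}$ is a bijection (Theorem \ref{Clarke-1997}). No new analysis of the construction of $\Phi$ will be needed.

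First I reduce the multiset inclusion to numerical inequalities. For each letter $s\in[m]$ and word $w$, let $N_s(w)$ be the multiplicity of $s$ in $\textsf{Rlwmin}(w)$. Lemma \ref{lemma-CSZ-Rlwmin-subset} says
\[
N_s(w)\leqslant N_s(\Phi_{\alpha}(w))\quad\text{for all } s\in[m] \text{ and all } w\in\mathfrak{S}_{\alpha}.
\]
Since $\Phi_{\alpha}$ is a bijection of the finite set $\mathfrak{S}_{\alpha}$ onto itself, summing over all $w$ gives
\[
\sum_{w\in\mathfrak{S}_{\alpha}}N_s(w)=\sum_{w\in\mathfrak{S}_{\alpha}}N_s(\Phi_{\alpha}(w)).
\]
Every summand on the left is at most the corresponding summand on the right, and the two totals are equal. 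Hence equality holds termwise: $N_s(w)=N_s(\Phi_{\alpha}(w))$ for every $s$ and $w$. This is exactly $\textsf{Rlwmin}(w)=\textsf{Rlwmin}(\Phi_{\alpha}(w))$.

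An equivalent formulation is that the total size $\textsf{rlwmin}$ is preserved on average while it can never decrease, so it is preserved pointwise; the multiset inclusion then upgrades to equality. The only point needing care is that the inclusion in Lemma \ref{lemma-CSZ-Rlwmin-subset} is genuinely a multiset inclusion, i.e.\ letter-by-letter multiplicities. That lemma was proved in exactly this form, via the rightmost $k$ occurrences of each letter $s$, so I expect no real obstacle. The substantive work lies in Lemmas \ref{lemma-CSZ-Rlwmin-subset-perm}--\ref{lemma-CSZ-Rlwmin-subset}, and the proposition follows from them by this finiteness argument.
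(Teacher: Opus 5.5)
Your proposal is correct and is essentially the paper's argument: both upgrade the one-sided multiset inclusion of Lemma~\ref{lemma-CSZ-Rlwmin-subset} to an equality using only that $\Phi_{\alpha}$ is a bijection of the finite set $\mathfrak{S}_{\alpha}$. The paper does this by following the orbit $w,\Phi_{\alpha}(w),\ldots,\Phi_{\alpha}^{k}(w)=w$, so that the chain of inclusions closes up. You instead sum the multiplicities $N_s$ over all of $\mathfrak{S}_{\alpha}$ and compare termwise, which exploits the same finiteness in an equivalent way.
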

\begin{proof}
Since $\Phi_{\alpha}$ is a bijection on the finite set $\mathfrak{S}_{\alpha}$,
for any $w\in\mathfrak{S}_{\alpha}$, there exists a positive integer $k$ such that
$$w\xrightarrow{\Phi_{\alpha}}\Phi_{\alpha}(w)\xrightarrow{\Phi_{\alpha}}\Phi_{\alpha}^{2}(w)\xrightarrow{\Phi_{\alpha}}\cdots\xrightarrow{\Phi_{\alpha}}\Phi_{\alpha}^{k}(w)=w.$$
By Lemma \ref{lemma-CSZ-Rlwmin-subset},	we have
$$\textsf{Rlwmin}(w)\subseteq\textsf{Rlwmin}(\Phi_{\alpha}(w))\subseteq\textsf{Rlwmin}(\Phi_{\alpha}^{2}(w))\subseteq\cdots\subseteq\textsf{Rlwmin}(\Phi_{\alpha}^{k}(w))=\textsf{Rlwmin}(w).$$
Hence all these multisets are equal.
This completes the proof.
\end{proof}

By Theorem \ref{Clarke-1997} and Proposition \ref{Prop-CSZ-keeps-Rlwmin},
the triples $(\textsf{des},\textsf{mak},\textsf{mad})$  and 
$(\textsf{exc},\textsf{den},\textsf{inv})$ are equidistributed on $\mathfrak{S}_{\alpha,R}$.

\section*{Acknowledgment}
\addcontentsline{toc}{section}{Acknowledgment} 
This work was supported by the National Natural Science Foundation of China (12101134).

\phantomsection
\begin{spacing}{0.5}
	
\end{spacing}
\end{document}